\documentclass[11pt]{article}
\usepackage[margin=1in]{geometry}
\usepackage[utf8]{inputenc}
\usepackage{authblk}
\usepackage{setspace}
\usepackage{graphicx}
\usepackage{subcaption}
\usepackage{amsmath}
\usepackage{amssymb}
\usepackage{amsthm}
\usepackage{bm}
\usepackage{algorithm}
\usepackage{algpseudocode}
\usepackage{tikz}
\usetikzlibrary{arrows}
\usepackage{booktabs}
\usepackage{multirow}
\usepackage{makecell}
\usepackage{diagbox}
\usepackage{enumitem} 
\usepackage[dvipsnames]{xcolor}

\usepackage[T1]{fontenc}
\usepackage[sc]{mathpazo}

\usepackage{natbib}
\bibpunct[, ]{(}{)}{;}{a}{,}{,}%

\newcommand{\E}{\mathbb{E}}

\newtheorem{assumption}{Assumption}
\newtheorem{remark}{Remark}
\newtheorem{lemma}{Lemma}
\newtheorem{theorem}{Theorem}
\newtheorem{corollary}{Corollary}

\newcommand{\keywords}[1]{%
  \vspace{0.5em}\noindent\textbf{Keywords:} \small #1\par}

\title{Feasibility Determination for Subjective Probability Constraints}

\usepackage{authblk}

\author[1]{Taehoon Kim}
\author[2]{Sigr\'un Andrad\'ottir}
\author[2]{Seong-Hee Kim}
\author[3]{Yuwei Zhou}

\affil[1]{Korea Military Academy, Republic of Korea Army}
\affil[2]{H. Milton Stewart School of Industrial and Systems Engineering, Georgia Institute of Technology}
\affil[3]{Kelley School of Business, Indiana University}

\date{}

\begin{document}

\maketitle

\vspace{-1cm}
\begin{abstract}
We consider the problem of determining feasible systems from a finite set of simulated alternatives with respect to probability constraints, where the observations from stochastic simulations are Bernoulli distributed. Most statistically valid procedures for feasibility determination focus on constraints on the means of normally distributed observations. Although these procedures can be adapted to Bernoulli-distributed data by treating batch means as basic observations, achieving approximate normality often requires a large batch size, potentially leading to the unnecessary waste of observations in reaching a decision. This paper proposes a procedure that utilizes the Bernoulli-distributed observations directly to determine feasibility. In addition, we incorporate subjective constraints, allowing for multiple thresholds for each constraint. We demonstrate that our proposed procedure is statistically valid and that it outperforms an existing feasibility determination procedure for subjective constraints originally developed for normally distributed observations. Furthermore, we propose two heuristic feasibility check approaches for thresholds that are sequentially added by decision makers, allowing thresholds to be tightened when many systems are feasible or relaxed when no feasible system exists. We show by experiments that the proposed procedures can efficiently provide feasibility decisions to systems with respect to all thresholds considered.

\keywords{Simulation, Ranking and Selection, Probability Constraints, Subjective Constraints, Pruning}
\end{abstract}


\section{Introduction}
\label{sec:Intro}

We consider the problem of identifying feasible systems among a finite number of simulated alternatives, when the observations follow Bernoulli distributions. This problem occurs when decision-makers consider constraints on probabilities. 
For instance, in a system operating under an $(s,S)$ inventory policy (where the decision maker orders products to increase the inventory level up to $S$ when the inventory level is below $s$ at a review period and places no order otherwise), a decision maker might wish to identify a combination of $s$ and $S$ that satisfies two constraints: (1) the probability of yearly total cost exceeding $1.4$ million dollars is no more than $h_1$, and (2) the probability that stockout happens within a year is no more than $h_2$. The decision maker may prefer strict thresholds $(h_1, h_2)=(1\%, 1\%)$. If no feasible solution exists, the decision maker may consider larger $h_1$ values such as 5\%, 10\%, or 20\%. Furthermore, if she is more sensitive to the stockout probability constraint, she may consider a tighter grid of $h_2$ values, such as 1\%, 2\%, 3\%, $\ldots, 20\%$, if necessary to identify a feasible system.         
In this example, the constraints are stochastic in the sense that the probability needs to be estimated based on stochastic observations. In addition, the basic observations are Bernoulli distributed with values 1 (the event of interest occurs) or 0 (the event of interest does not occur). The constraints are also subjective in that multiple threshold values can be considered. 

The feasibility determination problem we consider is studied in the field of ranking and selection (R\&S). R\&S procedures are primarily used to identify the system with the best performance measure among a finite number of simulated systems, where the definition of ``best'' depends on the specific problem; see \cite{Hong2021} for a review of R\&S. \cite{kim2006selecting} and \cite{Hong2015} discuss four types of selection problems: selection of the best, comparison with a standard, multinomial selection, and Bernoulli selection. Of these, the selection of the best, whose goal is to find the system with the largest or smallest expected performance measure, is studied the most. In the selection of the best, observations are typically assumed to be normally distributed, and several approaches have been developed. \cite{nelson2001simple} and \cite{KimNelson2001} consider the indifference zone (IZ) approach, where the IZ parameter represents the smallest difference worth detecting between the best system and the rest. \cite{chen2000simulation} and \cite{lee2010finding} propose optimal computing budget allocation (OCBA) procedures, while \cite{frazier2008knowledge} and \cite{XieFrazier2013} employ the Bayesian approach.  

When the decision maker aims to find a system with the largest probability of an event, Bernoulli selection can be applied, where observations follow Bernoulli distributions. A method for selecting the system with the highest probability of success using the IZ approach is introduced by \cite{sobel1957selecting}. Instead of using an IZ on the smallest difference worth detecting among probabilities, various researchers incorporate an IZ on the odds-ratio, which represents the number of successes per failure in each system. Specifically, \cite{bechhoferkiefersobel1968}, \cite{tamhane1985some}, \cite{paulson1994}, and \cite{wieland2004odds} develop Bernoulli selection procedures that utilize an odds-ratio IZ that represents the minimal odds-ratio worth detecting.
\cite{bechhoferkiefersobel1968} and \cite{tamhane1985some} solve the Bernoulli selection problem using the odds-ratio IZ with a random walk model. \cite{paulson1994} uses the odds-ratio IZ approach to eliminate inferior systems, and thus improve the performance of the selection procedure. Similarly, \cite{wieland2004odds} solve the Bernoulli selection problem using the gambler's ruin problem \citep{TaylorKarlin1994} combined with the odds-ratio IZ, aiming to develop a more efficient procedure by narrowing the decision-making region during the execution of the procedure. Alternatively, \cite{malone2005performance} apply existing procedures by \cite{KimNelson2001} for the selection of the best by taking batch means of Bernoulli data to achieve approximate normality and treating the batch means as basic observations.

While most R\&S procedures focus on a single performance measure, constrained R\&S considers multiple performance measures. The objective is to find the system with the best primary performance measure while satisfying constraints on the secondary performance measures. Thus, both feasibility determination and comparison are required in constrained R\&S. Several approaches have been developed to solve constrained R\&S. Among the procedures that utilize the IZ approach, \citet{AndradottirKim2010} and \citet{Healey2013, Healey2014} propose constrained R\&S procedures that find the best feasible system, while \citet{BaturKim2010} identify a set of feasible solutions in the presence of multiple constraints. \citet{hong2015chance} consider chance constrained selection of the best, where the secondary performance measures are probabilities. They propose procedures that first check the feasibility
of all solutions and then select the best among all the sample feasible solutions. \citet{Lee2012}, \citet{HunterPasupathy2013}, \citet{Pasupathy2014}, and \citet{GaoChen2017} propose sampling frameworks that approximate the optimal computing budget allocation while considering stochastic constraints. 
For the Bayesian approach, \citet{XieFrazier2013} discuss a Bayes-optimal policy for determining a set of simulated solutions whose mean performance exceeds a fixed threshold.

The aforementioned procedures consider a single fixed set of thresholds on the constraints. On the other hand, \cite{zhou2022finding} first introduce the concept of ``subjective" constraints, allowing multiple thresholds on each constraint to be considered at once. To reduce simulation costs, they recycle observations generated for feasibility determinations with different thresholds, which resembles the idea of ``green simulation" by \cite{feng2017green}. \cite{Zhou2024} extend the idea of subjective constraints to prune inferior systems as an intermediate step toward identifying the most preferred system in (possibly) multi-objective settings. Specifically, they allow the decision maker to impose multiple thresholds on each constraint, either by considering all thresholds simultaneously or by adding them sequentially. Inferior systems are then pruned based on feasibility decisions with respect to these thresholds, together with the decision maker’s preferences over different combinations of thresholds across constraints.

In constrained R\&S, constraints are typically imposed on the expectation of normally distributed data (except for \citet{hong2015chance}). Theoretically, when observations follow a Bernoulli distribution, one can still apply existing procedures for feasibility determination by treating the batch means of Bernoulli distributed data as basic observations. However, it is well known that large batch sizes can lead to inefficiency, causing unnecessary waste of observations before reaching a decision, especially in fully sequential type procedures, see \cite{kim2006asymptotic}.

In this paper, we develop IZ procedures for determining the feasibility of systems when constraints are on probabilities, i.e., the expectation of Bernoulli-distributed data with outputs of 1 (``success") or 0 (``failure"). We adopt an odds-ratio IZ, which is more practical than the conventional difference-based IZ (see Section \ref{subsec:CorrectDecision}). Our approach also eliminates the need to estimate variances or to batch observations to obtain approximate normality, and allows for subjective probability constraints, where multiple thresholds can be considered for each constraint. We prove that our approach guarantees statistical validity when all thresholds are defined up front and we demonstrate that it outperforms the existing feasibility determination procedure $\mathcal{RF}$ for subjective constraints with normally distributed observations due to \cite{zhou2022finding}.
Additionally, we propose heuristic multiple-pass procedures to allow decision makers to sequentially add thresholds if too many systems are feasible or if no feasible system is found. Our experiments show that the proposed procedures can be used for the efficient selection of the feasible system with respect to the most preferred combination of thresholds across multiple constraints.

The contributions of this paper are as follows: (i) we propose a procedure for feasibility determination with probability constraints using Bernoulli distributed data; (ii) we extend the analysis to incorporate subjective probability constraints; (iii) we prove the statistical validity of the proposed procedure and demonstrate empirically that it yields significant savings compared to an existing procedure; and (iv) we propose heuristic multiple-pass procedures that allow decision makers to add thresholds sequentially, and demonstrate their efficiency through experiments. 

It is worth mentioning that our problem is related to the best arm identification problem with fixed confidence in the multi-armed bandit (MAB) literature. Although R\&S and MAB share some similarities, MAB is different from our problem. MAB is typically considered in an online learning setting, with assumptions on bounded rewards or cost, and aims to minimize cumulative regrets, see \citet{Gabillon2011, Gabillon2012}, \citet{Kalyanakrishnan2012}, \citet{JamiesonNowak2014}, \citet{GarivierKaufmann2016}, \cite{Agrawal2021}, \citet{QinYou2025}, and their references for more detailed discussion. A closely related work to this paper is \citet{Locatelli2016} who propose an algorithm that finds the set of solutions whose means are above a given threshold under a fixed-budget setting (which is different from our setting). 
Some existing work also studies the constrained best arm identification problem; see \cite{Katz-Samuels2019}, \cite{Wang2022}, \cite{Li2023},  \cite{RussoVannella2024}, \cite{Dharod2024}, and \citet{Yang2025} for examples. To the best of our knowledge, none of the aforementioned literature consider subjective constraints. \citet{Auer2016} and \citet{Kone2023, Kone2024} consider the multi-objective best arm identification problem. However, they identify the Pareto set, whereas we identify feasible systems with respect to each combination of thresholds across all constraints. 

The rest of this paper is organized as follows: Section \ref{sec:background} provides our problem and notation. Our statistically valid procedure, along with three heuristic multi-pass procedures, are given in Section \ref{sec:procedure} and the proof of the statistical guarantee is included in Section \ref{sec:statistical validity}. Experimental results are discussed in Section \ref{sec:experiments}, followed by concluding remarks in Section \ref{sec:conclusion}. Appendices \ref{sec:AdditionalResults_BRFStoppingTime}, \ref{sec:RFComparison_Additional}, and \ref{sec:Experiments_Additional} include additional experimental results.
A detailed discussion of the tolerance levels used and the implementation of the competing procedure, $\mathcal{RF}$ by \citet{zhou2022finding}, is provided in Appendices \ref{sec:RF_Tolerance} and \ref{sec:RF}, respectively. Note that \citet{Kim2024} discuss a preliminary version of this work with only a single constraint and does not include either detailed mathematical proofs or the multi-pass procedures. \citet{Kim2024thesis} also discusses a preliminary version of this work.    

\section{Problem, Notation, and Correct Decision}
\label{sec:background}
In this section, we first discuss our problem and notation in Section \ref{subsec:Problem} and then define the correct decision event in Section \ref{subsec:CorrectDecision}. 

\subsection{Problem and Notation}
\label{subsec:Problem}

We consider stochastic and terminating simulations of $k$ systems, where each system is subject to $s$ constraints on probabilities. Let $\Gamma=\{1,\ldots,k\}$ denote the index set of all possible systems. Observation $Y_{i\ell n}$ represents whether an event of interest occurs ($Y_{i\ell n}=1$) or does not occur ($Y_{i\ell n} = 0$) during the $n$th replication of the $i$th system for the $\ell$th performance measure, where $i\in \Gamma$, $\ell = 1, 2, \ldots, s$, and $n = 1, 2, \ldots$. The probability of system $i$ regarding performance measure (constraint) $\ell$ is $p_{i\ell} = \E[Y_{i\ell n}]$, and system $i$ is considered feasible with respect to constraint $\ell$ at threshold $h$ if $p_{i\ell}\leq h$. Observations are assumed to satisfy the following assumption:
\begin{assumption} 
\label{assump:bern} 
For each $i \in \Gamma$, $(Y_{i1n}, Y_{i2n}, \ldots, Y_{isn})$ are independent and identically distributed for $n=1,2,\ldots$, where $Y_{i\ell n}$ are Bernoulli distributed with probability $p_{i\ell}$ for $\ell=1,\ldots,s$.
\end{assumption}

Note that we allow for cross-correlation between observations $Y_{i\ell n}$ and $Y_{i\ell' n}$ for $\ell \neq \ell'$ of different performance measures from the same replication of a system. For example, in an $(s, S)$ inventory example, the event that the yearly total cost exceeds a given threshold may be correlated with the event that a stockout occurs within a year. If common random numbers (CRN) are not used, then observation vectors from different systems, i.e., $(Y_{i1n}, Y_{i2n}, \ldots, Y_{isn})$ and $(Y_{i'1n}, Y_{i'2n}, \ldots, Y_{i'sn})$ for $i \neq i'$, are independent. 
Although CRN is generally not recommended for feasibility determination \citep{zhou2022finding}, we consider both independent sampling and CRN. This is because CRN is the default design in many commercial software packages, and feasibility determination procedures are often combined with a procedure for the selection of the best, where CRN is beneficial.

Our problem considers performing feasibility checks for subjective probability constraints, where the decision maker is willing to consider multiple threshold values for each constraint. When the decision maker is interested in considering a large number of possible thresholds or adjusting thresholds after reviewing the feasibility decisions for other thresholds, she may prefer to conduct feasibility checks across multiple passes with different threshold values. 
Specifically, for $w\geq 1$, let $d_\ell^{(w)}$ be the number of thresholds considered for constraint $\ell$ in the $w$th pass, and let $h_{\ell, m}^{(w)}$ be the $m$th threshold value being assessed, where $m=1,\ldots, d_\ell^{(w)}$. Without loss of generality, we assume $0< h_{\ell,1}^{(w)}< h_{\ell,2}^{(w)} < \cdots <h_{\ell,d_\ell^{(w)}}^{(w)}<1$ for $\ell=1,\ldots,s$. To conduct the multi-pass feasibility check, the decision maker performs feasibility check on thresholds $\{h_{\ell,1}^{(1)}, h_{\ell,2}^{(1)}, \ldots, h_{\ell,d_\ell^{(1)}}^{(1)}\}$ in a first pass. Based on the resulting feasibility decisions, she may perform a second pass on thresholds $\{h_{\ell,1}^{(2)}, h_{\ell,2}^{(2)}, \ldots, h_{\ell, d_\ell^{(2)}}^{(2)}\}$. This process can be repeated until the decision maker is satisfied with the feasibility decisions for the selected thresholds. 
To facilitate the discussion, for $w\geq 1$, we let 
\begin{align*}
    T_\ell^{(w)} &= \text{set of thresholds considered for constraint $\ell$ in the $w$th pass}= \{ h_{\ell, 1}^{(w)}, h_{\ell, 2}^{(w)}, \ldots, h_{\ell, d_\ell^{(w)}}^{(w)} \},  \\
    L^{(w)} &= \text{index set of constraints with thresholds to be considered in the $w$th pass (i.e., $\ell$ such } \\
    &\quad \text{that $T_\ell^{(w)}\not=\emptyset$).} 
\end{align*}
Our objective is to find the feasible systems with respect to each combination of thresholds across all constraints after completing $w$ feasibility check passes. 

\subsection{Correct Decision} 
\label{subsec:CorrectDecision}

We begin by discussing the IZ parameter. 
In Bernoulli selection, where the goal is to identify the system with the largest success probability among $k$ systems, three types of IZ settings are commonly considered \citep{wieland2004odds}. Since Bernoulli selection considers only one performance measure (unlike our feasibility problem), we now drop the subscript $\ell$ in $p_{i\ell}$, where $i\in \Gamma$. When $p_k > p_{k-1} \geq \cdots \geq p_1$, a statistically valid selection procedure guarantees to select system $k$ with at least $1-\alpha$ probability under one of the following three types of IZ settings: 

\begin{itemize}
    \item {\it Difference}: $p_k - p_{k-1} \geq \delta >0 $. 
    \item {\it Odds-ratio}: ${p_k/(1-p_k) \over p_{k-1}/(1-p_{k-1})} \geq \theta >1$.
    \item {\it Relative risk}: $p_k/p_{k-1} \geq \theta >1$.
\end{itemize}

As discussed in Section \ref{sec:Intro}, the odds-ratio is defined as the ratio of odds of 
successes to failure in one system relative to another system. We believe that the odds-ratio IZ $\theta>1$ is more practical than the difference IZ and relative risk IZ. For instance, with the difference IZ set to $\delta>0$, satisfying it would require $p_k\geq p_{k-1}+\delta$, which may require $p_k>1$. Similarly, to satisfy relative risk IZ with $\theta>1$, $p_k$ would need to be at least $\theta p_{k-1}$, which can also exceed 1. However, by using the odds-ratio IZ with $\theta>1$, the difference between $p_k$ and $p_{k-1}$ is significant if $p_k\geq \frac{\theta p_{k-1}}{1+(\theta-1)p_{k-1}}$. Since $\frac{\theta p_{k-1}}{1+(\theta-1)p_{k-1}}<1$ always holds, the requirement on $p_k$ is more practical. 
Furthermore, according to \cite{wieland2004odds}, the odds-ratio offers two key advantages. As the probability approaches 0 and 1, the odds-ratio reduces the difference considered significant among two probabilities. For instance, with $\theta=1.2$, if $p_{k-1} = 0.9$, then $p_k$ must be greater than $0.915$ for system $k$ to satisfy the odds-ratio IZ setting. Similarly, if $p_{k-1}=0.1$, then $p_k$ should be larger than 0.118. 
On the other hand, if $p_{k-1} = 0.5$, $p_k$ needs to exceed $0.545$. In other words, when measured by the odds-ratio, a 1\% difference between two probabilities close to 0 or 1 is considered a more significant difference than the same 1\% difference between two probabilities close to 50\%. This is desirable because increasing a probability close to 1 or decreasing a probability close to 0 requires more effort, and the odds-ratio accounts for this.
Finally, the odds-ratio enables the use of the gambler's ruin analysis approach, which does not require an initial sample size to estimate variance or a normality assumption of the observations (thus no batching is needed). 
For these reasons, we employ the odds-ratio IZ approach.

Unlike the Bernoulli selection problem, where the selection decisions are based on comparing observations across different systems, our feasibility check problem involves comparing observations from systems against user-specified thresholds (see the detailed discussion in Section \ref{sec:procedure}). Consequently, we adapt the odds-ratio IZ setting to the feasibility check context. 
Specifically, we consider an odds-ratio IZ parameter $\theta_\ell > 1$ for each constraint $\ell$ specified by the decision maker. During the $w$th pass, for a constraint where the probability $p_{i\ell}$ of system $i$ should be less than or equal to a threshold $h_{\ell, m}^{(w)}$, where $w\geq 1$ and $m=1,\ldots,d_\ell^{(w)}$, we define three sets:
\begin{itemize}
    \item Any system $i$ that satisfies 
    ${h_{\ell,m}^{(w)}/(1-h_{\ell,m}^{(w)}) \over p_{i\ell}/(1-p_{i\ell})}={(1-p_{i\ell})h_{\ell,m}^{(w)} \over p_{i\ell}(1-h_{\ell,m}^{(w)})} \ge \theta_\ell$
    is considered $D$esirable  with respect to threshold $h_{\ell,m}^{(w)}$ of constraint $\ell$, yielding the set $D_\ell(h_{\ell,m}^{(w)})$:
    \[
    D_\ell(h_{\ell,m}^{(w)})  =   \Bigg\{i\in \Gamma\; \Bigg|\;   {(1-p_{i\ell})h_{\ell,m}^{(w)} \over p_{i\ell}(1-h_{\ell,m}^{(w)})} \ge \theta_\ell \Bigg\}.
    \]
    \item  Any system $i$ with ${p_{i\ell}/(1-p_{i\ell}) \over h_{\ell,m}^{(w)}/(1-h_{\ell,m}^{(w)})}={p_{i\ell} (1-h_{\ell,m}^{(w)}) \over (1-p_{i\ell})h_{\ell,m}^{(w)}} \ge \theta_\ell$ is considered $U$nacceptable with respect to threshold $h_{\ell,m}^{(w)}$, placing them in set $U_\ell(h_{\ell,m}^{(w)})$:
    \[
    U_\ell(h_{\ell,m}^{(w)})  =   \Bigg\{i \in \Gamma\; \Bigg|\;  {p_{i\ell} (1-h_{\ell,m}^{(w)}) \over (1-p_{i\ell})h_{\ell,m}^{(w)}} \ge \theta_\ell \Bigg\}.
    \]
    \item The remaining systems are considered $A$cceptable and are placed in set $A_\ell(h_{\ell,m}^{(w)})$:
    \[
    A_\ell(h_{\ell,m}^{(w)})  =  \Gamma \setminus \left(D_\ell(h_{\ell,m}^{(w)})\cup U_\ell(h_{\ell,m}^{(w)})\right).
    \]
\end{itemize}
Figure \ref{fig:IZOddRatioDemo} shows a demonstration of the probabilities $p_{i\ell}$ that belong to the sets $D_\ell(h_{\ell,m}^{(w)}), U_\ell(h_{\ell,m}^{(w)})$, and $A_\ell(h_{\ell,m}^{(w)})$ for threshold $h_{\ell,m}^{(w)}$ on constraint $\ell$. We see that when $h_{\ell,m}^{(w)}$ is close to the center (i.e., around 0.5), the range of system means within $A_\ell(h_{\ell,m}^{(w)})$ is relatively wider compared to when $h_{\ell,m}^{(w)}$ is close to the extremes (i.e., around 0 or 1). This follows from the property of the odds-ratio that smaller differences are considered significant when $h_{\ell,m}^{(w)}$ is close to the extreme values.  
\begin{figure}[h!]
\centering
\includegraphics[scale=0.65]{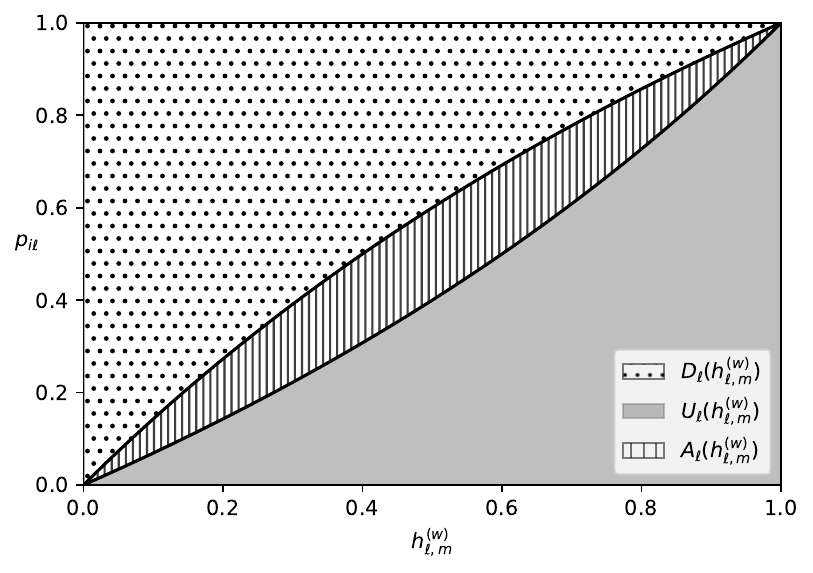}
\caption{Demonstration of the probabilities $p_{i\ell}$ that belong to the sets $D_\ell(h_{\ell,m}^{(w)}), U_\ell(h_{\ell,m}^{(w)})$, and $A_\ell(h_{\ell,m}^{(w)})$ as functions of the threshold $h_{\ell,m}^{(w)}$ on constraint $\ell$. $\theta_{\ell}$ is set as 1.5.} 
\label{fig:IZOddRatioDemo}
\end{figure}

When performing feasibility checks, we use ${\rm CD}_{i\ell}(h_{\ell,m}^{(w)})$ to denote the correct decision event for system $i$ with respect to threshold $h_{\ell,m}^{(w)}$ of constraint $\ell$. This is the event that system $i$ is declared feasible if $i \in D_\ell(h_{\ell,m}^{(w)})$ and infeasible if $i \in U_\ell(h_{\ell,m}^{(w)})$. For systems in $A_\ell(h_{\ell,m}^{(w)})$, any decision is considered correct. We define ${\rm CD}_{i\ell}$ as the correct decision event for system $i$ based on the thresholds tested in the first pass on constraint $\ell$, i.e., ${\rm CD}_{i\ell}=\cap_{h\in T_\ell^{(1)} } {\rm CD}_{i\ell}(h)$. Our proposed statistically-valid first-pass procedure ensures that the feasibility determination with respect to all thresholds tested in the first pass satisfies the following:
\begin{equation*}
\label{eqn:pcd}
\displaystyle{\rm PCD}=\Pr\left(\cap_{i=1}^k \cap_{\ell=1}^s {\rm CD}_{i\ell}\right) = \Pr\left(\cap_{i=1}^k \cap_{\ell=1}^s \cap_{h\in T_\ell^{(1)} } {\rm CD}_{i\ell}(h)\right)\geq 1-\alpha,
\end{equation*}
where $1 - \alpha$ is the nominal confidence level. 

In addition, we propose two heuristic procedures for subsequent passes when the decision maker decides to add thresholds. Although we do not prove the statistical validity of these procedures, we do not observe their validity being violated through experiments, see Section \ref{sec:experiments}. 

\begin{remark}
\label{remark:berf}
    Since the statistical validity of the first pass of our proposed procedure can be proved and its efficiency scales well with the number of thresholds considered (see Section \ref{sec:statistical validity} for more details), we recommend the decision maker to include all possible thresholds in the first pass (i.e., in $T_\ell^{(1)}$ for $\ell=1,\ldots,s$) if a statistically-valid procedure is desired.
\end{remark}

\section{Procedure} 
\label{sec:procedure}

In this section, we introduce our multi-pass Bernoulli feasibility check procedure $\mathcal{MPB}$. We let
$\mathcal{BRF}^{(w)}$ be the $w$th pass of Procedure $\mathcal{MPB}$, where $w=1,2,\ldots$. Section \ref{subsec:StatisticallyValidFirstPass} describes the statistically-valid first pass procedure $\mathcal{BRF}^{(1)}$. When the decision maker includes all possible thresholds within a single pass and does not conduct subsequent passes, we denote this procedure as $\mathcal{BRF}$.
Section \ref{subsec:HeuristicLaterPasses} discusses three heuristic procedures $\mathcal{BRF}_N^{(w)},\mathcal{BRF}_B^{(w)}$, and $\mathcal{BRF}_{BN}^{(w)}$ where $w\geq 2$, when decision maker chooses to add thresholds in subsequent passes.

\subsection{Statistically-Valid First-Pass Procedure}
\label{subsec:StatisticallyValidFirstPass}

Procedure $\mathcal{MPB}$ employs a random-walk model to process Bernoulli data without requiring the estimation of variance parameters or the use of batch means. Before presenting the procedure, we need some additional notation. First, for the overall confidence level $1-\alpha$, the nominal probability $\beta$ of error for each system is defined as follows:
\begin{equation}\label{eqn:beta}
    \beta = \left\{ \begin{array}{ll}
            1 - (1 - \alpha)^{1/k}, & \mbox{ if systems are simulated independently};\\
            \alpha/k, & \mbox{ if CRN is used}.
        \end{array} \right.
    \end{equation}
Next, let $\beta_\ell$ represent the nominal probability of error allocated to constraint $\ell$ within each system. The values of $\beta_\ell$ are determined as follows if $w$ feasibility check passes are performed: 
\begin{equation}
\label{eqn:betaell}
\begin{aligned}
& \text { $(i)$ }\; \beta_{\ell}=(\beta / s) \cdot \mathbb{I}\left(\sum_{u=1}^w d_{\ell}^{(u)}=1\right)+[\beta /(2 s)] \cdot \mathbb{I}\left(\sum_{u=1}^w d_{\ell}^{(u)}>1\right) \text { for } \ell=1 \text {, } \text { 2, } \ldots, s \text {, or } \\
& \text { $(ii)$ }\; \beta_{\ell}=\beta / D \text{ and } D=\sum_{\ell=1}^s \min \left\{\sum_{u=1}^w d_{\ell}^{(u)}, 2\right\} \text { for } \ell=1, \ldots, s, 
\end{aligned}
\end{equation}
where $\mathbb{I}(E)$ is the indicator function that returns 1 when event $E$ occurs and 0 otherwise.
Choice $(i)$ splits the nominal error $\beta$ equally among the constraints for each system and further among thresholds, while choice $(ii)$ splits $\beta$ equally among all ``effective'' thresholds tested across all constraints. As we prove in Section \ref{sec:statistical validity}, the statistical validity of Procedure $\mathcal{BRF}$ is based on the fact that there are at most two ``effective'' thresholds on each constraint $\ell$, regardless of the value of $d_\ell^{(1)}$. Note that the choices $(i)$ and $(ii)$ are identical if $\sum_{u=1}^w d_{\ell}^{(u)} >1$ for all $\ell=1,\ldots,s$.
\begin{remark}
    \label{remark:beta_all}
    In reality, the decision maker may not know the number of feasibility passes $w$ and $d_\ell^{(u)}$, for $u=2,\ldots, w$, in advance. In such cases, we recommend the decision maker to assume $\sum_{u=1}^w d_{\ell}^{(u)} > 1$ to determine $\beta_\ell$ if $d_\ell^{(1)}=1$ but there is a possibility for adding thresholds in the subsequent passes. 
\end{remark}

Since $\theta_\ell>1$, we can define $H_\ell$ as the smallest integer for constraint $\ell$ such that 
\begin{equation}
\beta_\ell \ge {1 \over 1 + \theta_\ell^{H_\ell}}.\label{eqn:H}
\end{equation}
We show that this choice of $H_\ell$ guarantees that the error for constraint $\ell$ does not exceed $\beta_\ell$ in Section~\ref{sec:statistical validity}.
Finally, for the $w$th pass, we let $I_{i\ell mn}^{(w)}$ denote dummy Bernoulli data with success probability $h_{\ell,m}^{(w)}$ for all $i\in\Gamma, \ell=1,\ldots,s, w\geq 1, m=1,\ldots,d_\ell^{(w)}$, and $n=1,2,\ldots$, which are independent of $Y_{i \ell 1},\ldots, Y_{i \ell n}$. 
Note that $I_{i\ell m n}^{(w)}$ can be correlated among different constraints (i.e., between $I_{i\ell m n}^{(w)}$ and $I_{i\ell' m n}^{(w)}$ where $\ell, \ell'=1,\ldots, s$), must be correlated among different thresholds on each constraint (i.e., between $I_{i\ell m n}^{(w)}$ and $I_{i\ell m' n}^{(w)}$ where $m, m'=1,\ldots,d_\ell^{(w)}$), and must be independent among different sampling stages (i.e., between $I_{i\ell mn}^{(w)}$ and $I_{i\ell m n'}$ where $n,n'=1,2,\ldots$). Moreover, if systems are simulated independently, $I_{i\ell m n}^{(w)}$ need to be independent among different systems (i.e., between $I_{i\ell m n}^{(w)}$ and $I_{i'\ell m n}^{(w)}$, where $i, i'=1,\ldots,k$). These assumptions are required for the proof of the statistical validity of $\mathcal{BRF}^{(1)}$.

Procedure $\mathcal{BRF}^{(w)}$ declares the feasibility of system $i$ with respect to $h_{\ell, m}^{(w)}$ for constraint $\ell$ as 
\begin{equation}
\begin{cases}
\text { feasible } & \text { if } \sum_{n=1}^{r_i}\left(Y_{i\ell n} -I_{i\ell mn}^{(w)}\right)\leq -H_\ell, \\ 
\text { infeasible } & \text { if } \sum_{n=1}^{r_i} \left(Y_{i\ell n} -I_{i\ell mn}^{(w)}\right)\geq H_\ell,  
\end{cases}  \label{eqn:BeRF_Decision1}
\end{equation}
where $r_i$ is the number of observations collected from system $i$ so far. 
In other words, the feasibility of system $i$ with respect to threshold $h_{\ell, m}^{(w)}$ for constraint $\ell$ is determined by where $\sum_{n=1}^{r_i}(Y_{i\ell n} -I_{i\ell mn}^{(w)})$ first exits the continuation region $(-H_\ell,H_\ell)$. Figure \ref{fig:berf} shows a sample path with $H_\ell=4$ where system $i$ is declared feasible with respect to threshold $h_{\ell, m}^{(w)}$ for constraint $\ell$.

\begin{figure}[h!]
\centering
\scalebox{0.25}{\includegraphics{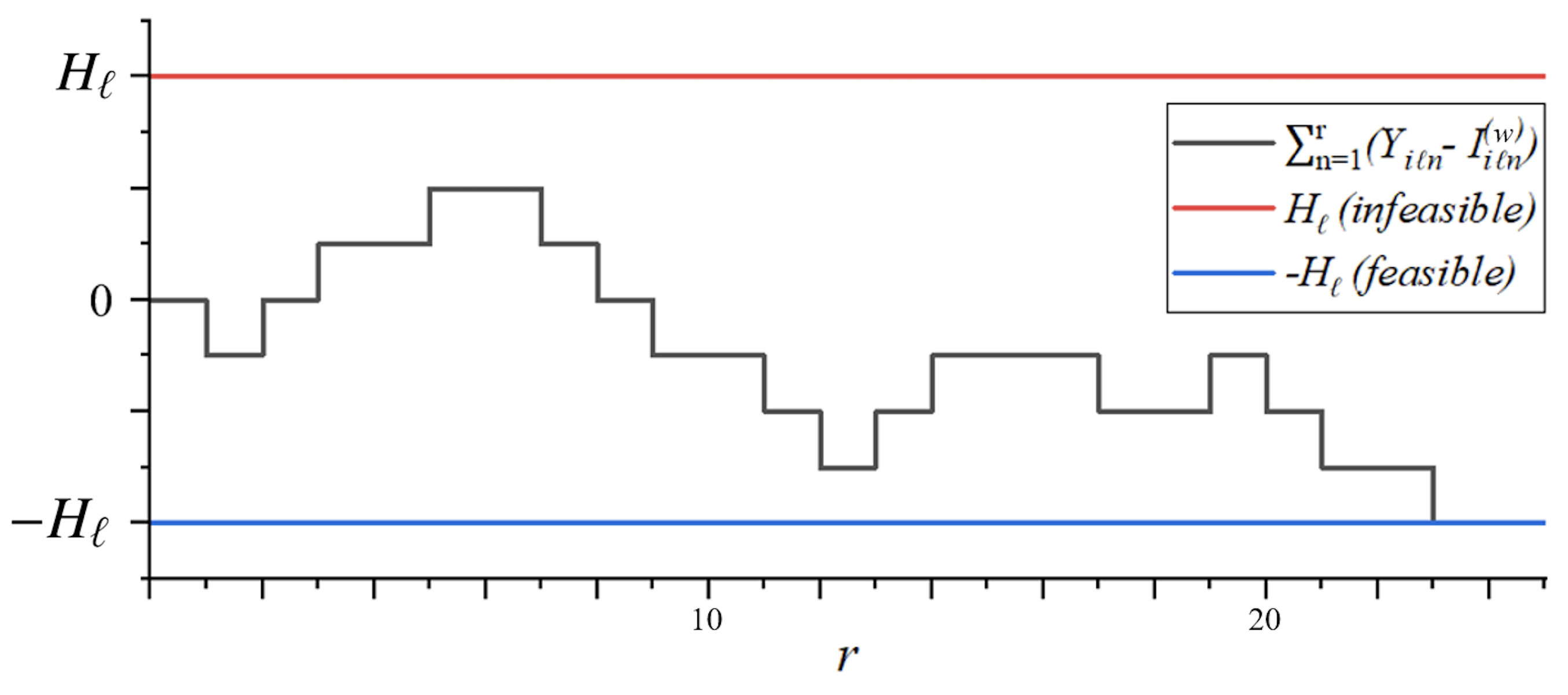}}
\caption{An example of a feasible decision with respect to the threshold $h_{\ell, m}^{(w)}$ for $\mathcal{BRF}^{(w)}$.} 
\label{fig:berf}
\end{figure}

\begin{remark}
   The dummy Bernoulli variables $I_{i\ell mn}^{(w)}$ introduce additional randomness (beyond the inherent randomness in the simulation data $Y_{i\ell n}$). At first glance, this may seem undesirable. However, the dummy Bernoulli variables $I_{i \ell mn}^{(w)}$ allow us to apply a simple random walk model, eliminating the need for initial observations in variance estimation. 
   Alternatively, one could directly use the threshold constant $h_{\ell,m}^{(w)}$ for feasibility determination. However, this approach leads to an infinite state space, making the application of a simple random walk model infeasible. Instead, the monitoring statistics can be approximated using Brownian motions. As demonstrated in Section \ref{subsec:RFComparison}, where we compare the performance of $\mathcal{BRF}$ with $\mathcal{RF}$ (which employs a Brownian motion approximation), our results indicate that incorporating the dummy variables $I_{i \ell mn}^{(w)}$ generally yields better performance. 
\end{remark}

Note that Equation (\ref{eqn:BeRF_Decision1}) is equivalent to declaring system $i$ 
\begin{equation}
\begin{cases}\text { feasible } & \text { if } \bar{Y}_{i\ell}(r_i)+\frac{H_\ell}{r_i} \leq \frac{\sum_{n=1}^{r_i} I_{i\ell mn}^{(w)}}{r_i}  , \\ 
\text { infeasible } & \text { if } \bar{Y}_{i\ell}(r_i)-\frac{H_\ell}{r_i} \geq \frac{\sum_{n=1}^{r_i} I_{i\ell mn}^{(w)}}{r_i},
\end{cases} \label{eqn:BeRF_Decision2}
\end{equation}
where $\bar{Y}_{i\ell}(r_i)$ denotes the average value of $r_i$ observations from system $i$ for constraint $\ell$, i.e., $\bar{Y}_{i\ell}(r_i)=\sum_{n=1}^{r_i} Y_{i\ell n}/r_i$. Equation (\ref{eqn:BeRF_Decision2}) provides an alternative interpretation of feasibility determination, which is used for performing feasibility checks for added thresholds in later passes (see Section \ref{subsec:HeuristicLaterPasses}). Specifically, the interval $[\bar{Y}_{i\ell}(r_i)+H_\ell/r_i,\bar{Y}_{i\ell}(r_i)-H_\ell/r_i]$ is the same for all threshold values and is updated as observations are collected. System $i$ is declared feasible (infeasible) for constraint $\ell$ with respect to threshold $h_{\ell,m}^{(w)}$ as soon as $\sum_{n=1}^{r_i} I_{i\ell mn}^{(w)}/r_i$ is excluded from the interval $[\bar{Y}_{i\ell}(r_i)-H_\ell/r_i, \bar{Y}_{i\ell}(r_i)+H_\ell/r_i]$ through its upper (lower) boundary. 
Figure \ref{fig:BeRF_Boundary} shows an example where two thresholds $h_{\ell, 1}^{(1)}<h_{\ell, 2}^{(1)}$ are considered during the first pass. System $i$ is declared infeasible with respect to threshold $h_{\ell,1}^{(1)}$ after $r_{i\ell 1}^{(1)}$ observations, as this is the first time $\sum_{n=1}^{r_i} I_{i\ell 1 n}^{(1)}/r_i$ is excluded from the interval $[\bar{Y}_{i\ell}(r_i)+H_\ell/r_i,\bar{Y}_{i\ell}(r_i)-H_\ell/r_i]$ through its lower boundary. Similarly, system $i$ is declared feasible with respect to threshold $h_{\ell, 2}^{(1)}$ after $r_{i\ell 2}^{(1)}$ observations, as $\sum_{n=1}^{r_i} I_{i\ell 2 n}^{(1)}/r_i$ is excluded from the interval through its upper boundary for the first time.

\begin{figure}[h!]
    \centering
    \includegraphics[scale=0.65]{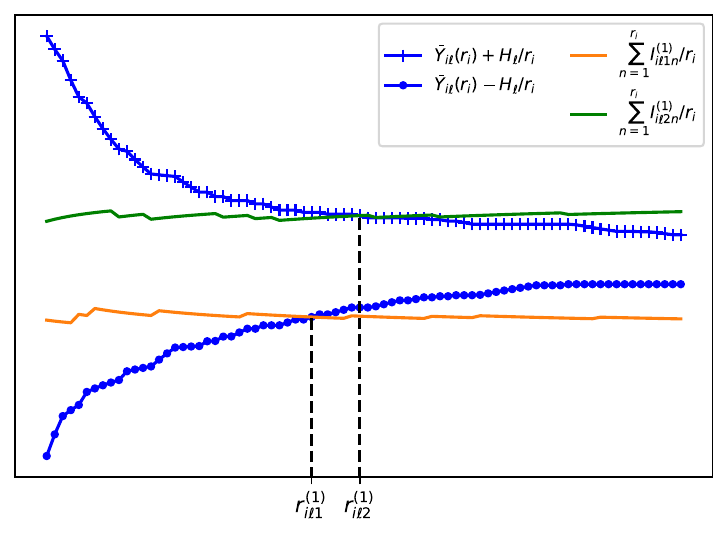}
    \caption{Feasibility determination for thresholds $h_{\ell, 1}^{(1)}<h_{\ell, 2}^{(1)}$ on constraint $\ell$.}
    \label{fig:BeRF_Boundary}
\end{figure}

In practice, the decision maker may wish to add new thresholds for constraint $\ell$ after the first pass of feasibility check. For example, as discussed in Section \ref{sec:Intro}, a decision maker considers a constraint on the probability of a stockout occurring within a year under an $(s, S)$ inventory policy. Suppose she starts with thresholds $\{0.05, 0.1, 0.2\}$ in the first pass and identifies multiple $s$ and $S$ combinations whose stockout probability lies between 0.05 and 0.1. Then she can add thresholds $\{0.06, 0.07, 0.08, 0.09\}$ and perform a second-pass feasibility check to further narrow down the stockout probability of the candidate systems.
To recycle the observations collected from earlier passes of feasibility checks, we may keep all values of $\bar{Y}_{i\ell}(r)$ for $r=1,2,\ldots,r_i$ until all passes are completed. However, this is undesired due to data storage problems. Inspired by \citet{Zhou2024}, we instead keep two statistics while the simulation of system $i$ is ongoing: 
\begin{equation*}
    v_{i\ell}^{\rm UB} = \min \left\{\bar{Y}_{i\ell}(r_i)+\frac{H_\ell}{r_i} \right\} \quad \text{ and } \quad
    v_{i\ell}^{\rm LB} = \max \left\{\bar{Y}_{i\ell}(r_i)-\frac{H_\ell}{r_i} \right\}.
\end{equation*}
We also introduce variable ${\rm LAST}_{i\ell}$, which tracks whether $v_{i\ell}^{\rm UB}$ or $v_{i\ell}^{\rm LB}$ changes when a new observation is collected. 
In Section \ref{subsec:HeuristicLaterPasses}, we explain how we use $v_{i\ell}^{\rm LB},v_{i\ell}^{\rm UB}$, and ${\rm LAST}_{i\ell}$ to conclude feasibility decisions for the added thresholds in later passes. 
While the variables $v_{i\ell}^{\rm UB},v_{i\ell}^{\rm LB}$, and ${\rm LAST}_{i\ell}$ are recorded in the first pass, they do not influence feasibility decisions in the first pass.

Algorithm \ref{alg:berf} describes the proposed first-pass procedure, denoted as $\mathcal{BRF}^{(1)}$ (we express the steps in terms of $w\geq 1$ to facilitate the description of our procedures for $w\geq 2$ in Section \ref{subsec:HeuristicLaterPasses}). In the algorithm, we use $Z_{i\ell m}^{(w)}$ to indicate whether system $i$ is feasible ($Z_{i\ell m}^{(w)}=1$) or infeasible ($Z_{i\ell m}^{(w)}=0$) with respect to threshold $h_{\ell, m}^{(w)}$ for constraint $\ell$, where $ i\in \Gamma, \ell\in L^{(1)}, w=1$, and $m=1, \ldots, d_\ell^{(w)}$. 
We use a common random seed to generate $I_{i\ell mn}^{(w)}$ for each system $i$ and observation $n$, which implies that $I_{i\ell mn}^{(w)}$ are dependent across constraints (i.e., $\ell$) and thresholds (i.e., $m$).
\begin{algorithm}[h!]
	\caption{First Pass of Procedure Bernoulli Recycled Feasibility, $\mathcal{BRF}^{(w)}, w=1$}\label{alg:berf}
{\fontsize{10}{13}\selectfont	
\begin{algorithmic}
	\State [{\bf Setup}:] Choose confidence level $1-\alpha$, threshold set $T_\ell^{(w)}=\{h_{\ell, 1}^{(w)},h_{\ell, 2}^{(w)},\ldots,h_{\ell, d_\ell^{(w)}}^{(w)}\}$, and odds-ratio IZ parameters $\theta_\ell$ for $\ell=1,2,\ldots, s$. Set $\Gamma =\{1,2,\ldots,k\}$, $L^{(w)}=\{\ell=1,\ldots,s\mid T_\ell^{(w)}\not=\emptyset\}$, and $H_\ell$ as the smallest integer such that \eqref{eqn:H} holds,
    where $\beta_\ell$ is determined as in \eqref{eqn:betaell} (see also Remark \ref{remark:beta_all}).

	\For{each system $i \in \Gamma$} 
        \State [{\bf Initialization}:] 
        \begin{itemize}
            \item Set ${\rm ON} = L^{(w)}$ and ${\rm ON}_\ell = \{1, 2, \ldots, d_\ell^{(w)}\}$ for $\ell\in {\rm ON}$.
            \item Assign system $i$ random seeds ${\rm SEED}^y_i$ and ${\rm SEED}^u_i$.
            \item Set $r_i=1$, generate $Y_{i\ell r_i}$ using ${\rm SEED}^y_i$ for $\ell=1,2,\ldots,s$, and generate a uniform random variable $U_{i r_i}\sim U(0,1)$ using ${\rm SEED}^u_i$.  
            \item Set $v_{i\ell}^{\rm UB}=\infty,v_{i\ell}^{\rm LB}=-\infty$, and ${\rm LAST}_{i\ell}$ as an empty string for $\ell=1,\ldots,s$.
        \end{itemize}
        \State [{\bf Feasibility Check}:]  
        \For{each constraint $\ell \in {\rm ON}$}
            \begin{itemize}
            \setlength{\itemindent}{0.15in}
                \item[] Set $v_{i\ell}^{\rm LB}=\max \left\{v_{i\ell}^{\rm LB}, \bar{Y}_{i\ell}(r_i)-H_\ell/r_i\right\}$. If $v_{i\ell}^{\rm LB}$ is updated, set ${\rm LAST}_{i\ell}={\rm LB}$.
                \item[] Set $v_{i\ell}^{\rm UB}=\min \left\{v_{i\ell}^{\rm UB}, \bar{Y}_{i\ell}(r_i)+H_\ell/r_i\right\}$. If $v_{i\ell}^{\rm UB}$ is updated, set ${\rm LAST}_{i\ell}={\rm UB}$.
            \end{itemize}
        \For{each threshold $m \in {\rm ON}_\ell$}
		\begin{itemize}
			\setlength{\itemindent}{0.4in}
            \item[] Set $I_{i \ell m r_i}^{(w)}=1$ if $U_{i r_i} \le h_{\ell, m}^{(w)}$ and 0 otherwise.
            \item[] If $\bar{Y}_{i\ell}(r_i)+H_\ell/r_i \leq \sum_{n=1}^{r_i} I_{i \ell m n}^{(w)}/r_i$, set $Z_{i\ell m}^{(w)}=1$ and ${\rm ON}_\ell = {\rm ON}_\ell\setminus \{m\}$;
			\item[] Else if $\bar{Y}_{i\ell}(r_i)-H_\ell/r_i\geq \sum_{n=1}^{r_i} I_{i \ell m n}^{(w)}/r_i$, set $ Z_{i\ell m}^{(w)}=0$ and ${\rm ON}_\ell = {\rm ON}_\ell\setminus \{m\}$. 
		\end{itemize}
        \EndFor     
        \State If ${\rm ON}_\ell = \emptyset$, set ${\rm ON} = {\rm ON}\setminus \{\ell\}$.
        \EndFor
    \State [{\bf Stopping Condition}:] 
		 \begin{itemize}
			\item[] If ${\rm ON}=\emptyset$, return $Z_{i\ell m}^{(w)}$ for $\ell\in L^{(w)}$ and $m=1,\ldots,d_\ell^{(w)}$ and the number of observations collected $r_i$, and save the assigned (initial) random seeds ${\rm SEED}^y_i$ and ${\rm SEED}^u_i$ for system $i$. Otherwise, set $r_i=r_i+1$, obtain $Y_{i\ell r_i}$ for all $\ell=1,\ldots,s$, generate $U_{ir_i}\sim U(0,1)$ independent of $Y_{i\ell 1},\ldots, Y_{i\ell r_i}$, and go to [{\bf Feasibility Check}]. 
		\end{itemize} 
    \EndFor
    \end{algorithmic}
}
\end{algorithm}


\begin{remark}
    If the decision maker only wishes to perform feasibility check through one pass, i.e., Procedure $\mathcal{BRF}$, there is no need to keep track of $v_{i\ell}^{\rm UB}, v_{i\ell}^{\rm LB}$, and ${\rm LAST}_{i\ell}$ when implementing Algorithm \ref{alg:berf}. Likewise, there is no need to store the random seeds used to generate the observations or the dummy Bernoulli variables; these are only needed when additional thresholds are introduced in subsequent passes. For brevity, we omit the algorithm statement for Procedure $\mathcal{BRF}$.
\end{remark}

\begin{remark}
    In Algorithm \ref{alg:berf}, we assign unique random seeds ${\rm SEED}^y_i$ and ${\rm SEED}^u_i$ to system $i$ to avoid the dependency among systems. When the systems are simulated under CRN, one can simply use common random seeds for all systems, i.e., by setting ${\rm SEED}_i^y={\rm SEED}^y$ and ${\rm SEED}_i^u={\rm SEED}^u$ for all $i\in\Gamma$.
\end{remark}

\subsection{Heuristic Procedures for Later Passes}
\label{subsec:HeuristicLaterPasses}

In this section, we discuss three heuristic approaches for adding thresholds for feasibility determination in later passes. The first two approaches are referred as $\mathcal{BRF}_B^{(w)}$ ($B$ for Bernoulli) and $\mathcal{BRF}_N^{(w)}$ ($N$ for normal), where $w\geq 2$, and we will discuss each one separately. In addition, we also propose a heuristic approach $\mathcal{BRF}_{BN}^{(w)}$ that combines  $\mathcal{BRF}_B^{(w)}$ and $\mathcal{BRF}_N^{(w)}$.

\paragraph{Heuristic approach $\mathcal{BRF}_B^{(w)}$.}
$\mathcal{BRF}_B^{(w)}$ employs similar decision criteria as Algorithm \ref{alg:berf} with a key difference: the feasibility decisions are based on comparing $\sum_{n=1}^{r_i} I_{i\ell mn}^{(w)}/r_i$ with $v_{i\ell}^{\rm LB}$ and $v_{i\ell}^{\rm UB}$ instead of comparing with $\bar{Y}_{i\ell}(r_i)+H_\ell/r_i$ and $\bar{Y}_{i\ell}(r_i)-H_\ell/r_i$. 
Before performing the feasibility check, we generate $I_{i\ell mn}^{(w)}$, for $n=1,\ldots,r_i$, using the same (initial) random seed assigned to system $i$ as in the previous pass $\mathcal{BRF}^{(w-1)}$ (i.e., ${\rm SEED}^u_i$). This ensures that $\mathcal{BRF}_B^{(w)}$ utilizes the same values of $I_{i\ell mn}^{(w)}$ (for $1\leq n\leq r_i$) as if $h_{\ell,m}^{(w)}$ had been included in the previous pass.
There are four possible outcomes when $\mathcal{BRF}_B^{(w)}$ is initialized: 
\begin{itemize}
    \item {\bf When $v_{i\ell}^{\rm UB}\leq \sum_{n=1}^{r_i} I_{i\ell mn}^{(w)}/r_i$ and $v_{i\ell}^{\rm LB}<\sum_{n=1}^{r_i} I_{i\ell mn}^{(w)}/r_i$:} The system $i$ is immediately declared feasible with respect to threshold $h_{\ell,m}^{(w)}$.
    \item {\bf When $v_{i\ell}^{\rm LB} \geq \sum_{n=1}^{r_i} I_{i\ell mn}^{(w)}/r_i$ and $v_{i\ell}^{\rm UB}>\sum_{n=1}^{r_i} I_{i\ell mn}^{(w)}/r_i$:} The system $i$ is immediately declared infeasible with respect to threshold $h_{\ell,m}^{(w)}$.
    \item {\bf When $v_{i\ell}^{\rm UB}\leq \sum_{n=1}^{r_i} I_{i\ell mn}^{(w)}/r_i\leq v_{i\ell}^{\rm LB}$:} The system $i$ is declared feasible (infeasible) with respect to threshold $h_{\ell,m}^{(w)}$ based on the value of ${\rm LAST}_{i\ell}$. If ${\rm LAST}_{i\ell}={\rm LB}$, system $i$ is deemed feasible; if ${\rm LAST}_{i\ell}={\rm UB}$, system $i$ is deemed infeasible. 

    Note that, although $v_{i\ell}^{\rm LB}<v_{i\ell}^{\rm UB}$ happens in general, it is possible for $v_{i\ell}^{\rm UB}\leq v_{i\ell}^{\rm LB}$ when $\mathcal{BRF}^{(w-1)}$ has concluded feasibility decisions for all thresholds considered on constraint $\ell$. In such cases, the decision depends on ${\rm LAST}_{i\ell}$. Specifically, ${\rm LAST}_{i\ell}={\rm LB}$ implies that $v_{i\ell}^{\rm LB}$ updates when $\mathcal{BRF}^{(w-1)}$ terminates, meaning that $v_{i\ell}^{\rm UB}\leq h_{\ell,m}^{(w)}$ would have been satisfied if $h_{\ell,m}^{(w)}$ has been considered in the $(w-1)$th pass. Therefore, we directly declare system $i$ feasible with respect to threshold $h_{\ell,m}^{(w)}$ if ${\rm LAST}_{i\ell}={\rm LB}$.
    Similarly, if ${\rm LAST}_{i\ell}={\rm UB}$, system $i$ is declared infeasible with respect to threshold $h_{\ell,m}^{(w)}$.  
    \item {\bf When $v_{i\ell}^{\rm LB}< \sum_{n=1}^{r_i} I_{i\ell mn}^{(w)}/r_i < v_{i\ell}^{\rm UB}$:} We collect additional observations, generate $I_{i\ell m r_i'}^{(w)}$ for $r_i'=r_i+1,r_i+2,\ldots$, and update variables $v_{i\ell}^{\rm UB}, v_{i\ell}^{\rm LB}$, and ${\rm LAST}_{i\ell}$ accordingly until the feasibility decision is made when either $v_{i\ell}^{\rm UB}\leq \sum_{n=1}^{r_i} I_{i\ell mn}^{(w)}/r_i$ or $v_{i\ell}^{\rm LB}\geq \sum_{n=1}^{r_i} I_{i\ell mn}^{(w)}/r_i$ holds. 
\end{itemize}

Figure \ref{fig:BeRF_Heuristic2} shows an example of how $\mathcal{BRF}_B^{(2)}$ concludes feasibility decisions for two added thresholds during the second pass, while the first pass is as in Figure \ref{fig:BeRF_Boundary} and $T_\ell^{(2)}=\{h_{\ell, 1}^{(2)}, h_{\ell, 2}^{(2)}\}$. 
System $i$ is immediately concluded to be infeasible with respect to $h_{\ell,1}^{(2)}$ since $\sum_{n=1}^{r_i} I_{i\ell 1n}^{(2)}/r_i<v_{i\ell}^{\rm LB}$. We let ${r^{(2)}_{i\ell 1}}={r^{(1)}_{i\ell 2}}$ be the required number of observations to conclude feasibility decision for $h_{\ell,1}^{(2)}$.
For threshold $h_{\ell, 2}^{(2)}$, additional observations are collected until either $v_{i\ell}^{\rm UB}\leq \sum_{n=1}^{r_i} I_{i\ell 2n}^{(2)}/r_i$ or $v_{i\ell}^{\rm LB}\geq \sum_{n=1}^{r_i} I_{i\ell 2 n}^{(2)}/r_i$ is satisfied. In this example, this condition is not satisfied until $r^{(2)}_{i\ell 2}$ observations are obtained, after which system $i$ is declared feasible with respect to $h_{\ell,2}^{(2)}$. Algorithm \ref{alg:berf_b} presents the statement of the $w$th pass procedure $\mathcal{BRF}_B^{(w)}$, where $w\geq 2$.

\begin{figure}[h!]
\centering
\begin{subfigure}{.5\textwidth}
  \centering
  \includegraphics[width=0.9\linewidth]{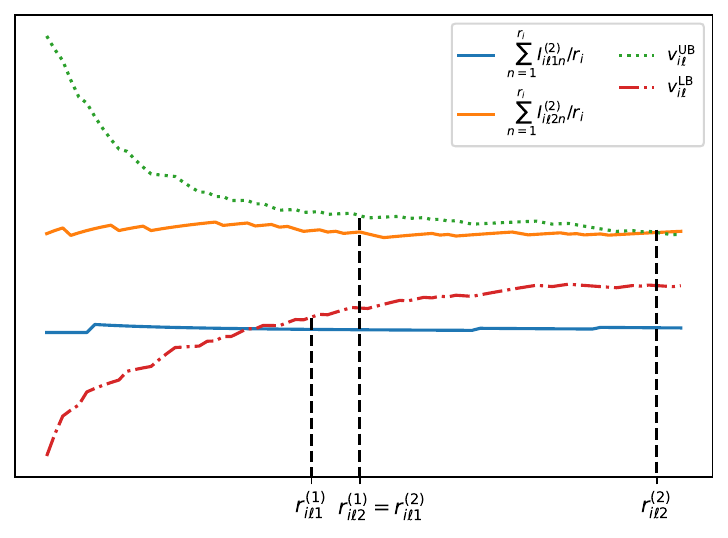}
  \caption{$\mathcal{BRF}_B^{(2)}$}
  \label{fig:BeRF_Heuristic2}
\end{subfigure}%
\begin{subfigure}{.5\textwidth}
  \centering
  \includegraphics[width=0.9\linewidth]{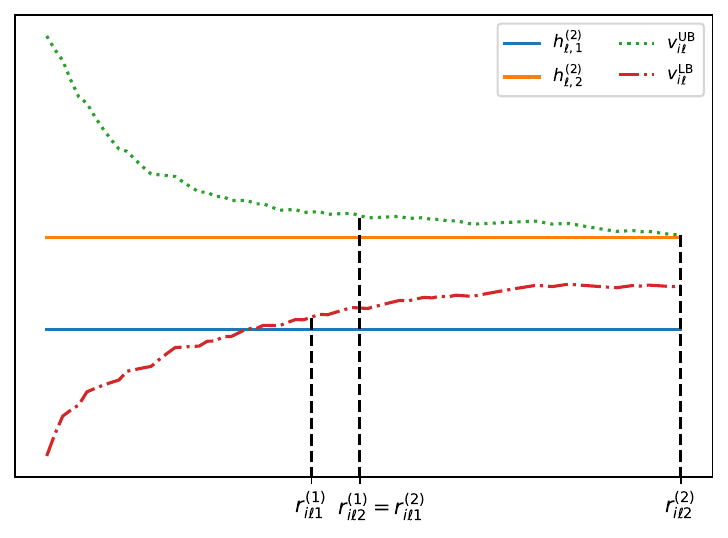}
  \caption{$\mathcal{BRF}_N^{(2)}$}
  \label{fig:BeRF_Heuristic1}
\end{subfigure}
\caption{An example of how $\mathcal{BRF}_B^{(2)}$ and $\mathcal{BRF}_N^{(2)}$ conclude feasibility decisions for two added thresholds $h_{\ell,1}^{(2)}<h_{\ell,2}^{(2)}$ during the second pass.}
\label{fig:BeRF_Heuristic}
\end{figure}

\begin{algorithm}[h!]
{\fontsize{10}{13}\selectfont	
    \begin{algorithmic}
    \caption{The $w$th Pass of Procedure Bernoulli Recycled Feasibility, $\mathcal{BRF}_B^{(w)}$, where $w\geq 2$}
    \label{alg:berf_b}
		\State {\bf [Setup:]} Choose the threshold set $T_\ell^{(w)}=\{h_{\ell,1}^{(w)},\ldots,h_{\ell,d_\ell^{(w)}}^{(w)}\}$ on constraint $\ell=1,\ldots,s$ for the $w$th pass. Set $L^{(w)}=\{\ell=1,\ldots,s\mid T_\ell^{(w)}\not=\emptyset\}$. 
		\For{each system $i\in\Gamma$}
		\State [{\bf Initialization}:] 
		\begin{itemize}
            \item Set ${\rm ON}=L^{(w)}$ and ${\rm ON}_\ell=\{1,\ldots,d_\ell^{(w)}\}$ for $\ell\in {\rm ON}$. 
            \item Obtain $r_i, \bar{Y}_{i\ell}(r_i), {\rm LAST}_{i\ell}, v_{i\ell}^{\rm LB}, v_{i\ell}^{\rm UB}, {\rm SEED}_i^y$, and ${\rm SEED}_i^u$ for $i\in\Gamma$ and $\ell=1,\ldots,s$ from $\mathcal{BRF}^{(1)}$ (if $w=2$) or  $\mathcal{BRF}_B^{(w-1)}$ (if $w\geq 3$). 
            \item For $\ell\in L^{(w)}$ and $m\in {\rm ON}_\ell$, generate $I_{i\ell m n}^{(w)}$ for $n=1,\ldots,r_i$ using ${\rm SEED}^u_i$ and compute $\sum_{n=1}^{r_i} I_{i\ell m n}^{(w)}/r_i$.
		\end{itemize}
        \State [{\bf Initial Feasibility Check} for $\mathcal{BRF}_B^{(w)}$:]
        \For{each constraint $\ell\in{\rm ON}$}   	
        \For{each threshold $m\in {\rm ON}_\ell$}
        \State If $v_{i\ell}^{\rm UB}\leq \sum_{n=1}^{r_i} I_{i\ell m n}^{(w)}/r_i$ and $v_{i\ell}^{\rm LB}<\sum_{n=1}^{r_i} I_{i\ell m n}^{(w)}/r_i$, set $Z_{i\ell m}^{(w)}=1$ and ${\rm ON}_\ell={\rm ON}_\ell \setminus \{m\}$;
        \State Else if $v_{i\ell}^{\rm LB}\geq \sum_{n=1}^{r_i} I_{i\ell m n}^{(w)}/r_i$ and $v_{i\ell}^{\rm UB}>\sum_{n=1}^{r_i} I_{i\ell m n}^{(w)}/r_i$, set $Z_{i\ell m}^{(w)}=0$ and ${\rm ON}_\ell={\rm ON}_\ell \setminus \{m\}$;
        \State Else if $v_{i\ell}^{\rm UB}\leq \sum_{n=1}^{r_i} I_{i\ell m n}^{(w)}/r_i\leq v_{i\ell}^{\rm LB}$,
            \begin{itemize}[leftmargin=2.5cm]
                \item[] if ${\rm LAST}_{i\ell}={\rm UB}$, set $Z_{i\ell m}^{(w)}=0$ and ${\rm ON}_\ell={\rm ON}_\ell \setminus \{m\}$;
                \item[] if ${\rm LAST}_{i\ell}={\rm LB}$, set $Z_{i\ell m}^{(w)}=1$ and ${\rm ON}_\ell={\rm ON}_\ell \setminus \{m\}$.
            \end{itemize}
        \EndFor
        \State If ${\rm ON}_\ell=\emptyset$, set ${\rm ON}={\rm ON} \setminus \{\ell\}$.
    \EndFor
    \State [{\bf Stopping Condition} for $\mathcal{BRF}_B^{(w)}$:] If ${\rm ON}=\emptyset$, return $Z_{i\ell m}^{(w)}$ for $\ell\in L^{(w)}$ and $m=1,\ldots,d_\ell^{(w)}$, the number of observations collected $r_i$, and the (initial) random seeds ${\rm SEED}^y_i$ and ${\rm SEED}^u_i$. Otherwise, set $r_i=r_i+1$, take one additional observation $Y_{i\ell r_i}$ for all $\ell\in {\rm ON}$, use ${\rm SEED}^u_i$ to generate $U_{i r_i}\sim U(0,1)$. 
    Then go to [{\bf Feasibility Check}] in Algorithm \ref{alg:berf}.

    
	\EndFor%
	\end{algorithmic}
 }
\end{algorithm}

\vspace{-0.5cm}
\paragraph{Heuristic approach $\mathcal{BRF}_N^{(w)}$.}
$\mathcal{BRF}_N^{(w)}$ utilizes the fact that, with a sufficiently large number of Bernoulli observations collected by the end of the first pass, the sample means are approximately normally distributed. In this case, we perform feasibility checks for the newly-added thresholds similar to the multi-pass procedure from \citet{Zhou2024}, which is designed for normally distributed observations. Instead of comparing $v_{i\ell}^{\rm LB}$ and $v_{i\ell}^{\rm UB}$ with $\sum_{n=1}^{r_i} I_{i\ell mn}^{(w)}/r_i$, as in Algorithm \ref{alg:berf}, we compare these values with the newly-added thresholds $h_{\ell, m}^{(w)}$, where $m=1,\ldots,d_\ell^{(w)}$, during the $w$th pass. There are four possible outcomes: (1) when $v_{i\ell}^{\rm UB}\leq h_{\ell,m}^{(w)}$ and $v_{i\ell}^{\rm LB}<h_{\ell,m}^{(w)}$; (2) when $v_{i\ell}^{\rm LB} \geq h_{\ell,m}^{(w)}$ and $v_{i\ell}^{\rm UB}>h_{\ell,m}^{(w)}$; (3) when $v_{i\ell}^{\rm UB}\leq h_{\ell,m}^{(w)}\leq v_{i\ell}^{\rm LB}$; and (4) when $v_{i\ell}^{\rm LB}< h_\ell^{(m)} < v_{i\ell}^{\rm UB}$. The decision criteria match those in $\mathcal{BRF}_B^{(w)}$  except for replacing $\sum_{n=1}^{r_i} I_{i\ell mn}^{(w)}/r_i$ with $h_{\ell, m}^{(w)}$. Under case (4), we do not need to generate additional $I_{i\ell m r_i'}^{(w)}$, for $r_i'=r_i+1,r_i+2,\ldots$. 

Figure \ref{fig:BeRF_Heuristic1} illustrates how feasibility decisions are concluded for two newly-added thresholds during the second pass, where $T_\ell^{(2)}=\{h_{\ell, 1}^{(2)}, h_{\ell, 2}^{(2)}\}$. The first pass is identical to what is shown in Figure \ref{fig:BeRF_Boundary}. After collecting $r_{i\ell 2}^{(1)}$ observations in the first pass, we compare the values of $v_{i\ell}^{\rm LB}$ and $v_{i\ell}^{\rm UB}$ at this point with the new thresholds $h_{\ell, 1}^{(2)}$ and $h_{\ell, 2}^{(2)}$. In this case, system $i$ is immediately declared infeasible for threshold $h_{\ell, 1}^{(2)}$ because $h_{\ell, 1}^{(2)}$ is smaller than $v_{i\ell}^{\rm LB}$. In Figure \ref{fig:BeRF_Heuristic1}, we let $r_{i\ell 1}^{(2)}=r_{i\ell 2}^{(1)}$ denote the number of observations to conclude its feasibility. To conclude feasibility decision for threshold $h_{\ell,2}^{(2)}$, we need to collect additional observations since $h_{\ell,2}^{(2)}$ lies between $v_{i\ell}^{\rm LB}$ and $v_{i\ell}^{\rm UB}$. In this example, after collecting $r_{i\ell 2}^{(2)}$ observations, system $i$ is declared feasible to threshold $h_{\ell,2}^{(2)}$.
Algorithm \ref{alg:berf_n} presents the statement of the $w$th pass procedure $\mathcal{BRF}_N^{(w)}$, where $w\geq 2$.
\begin{algorithm}[h!]
{\fontsize{10}{13}\selectfont	
    \begin{algorithmic}
    \caption{The $w$th Pass of Procedure Bernoulli Recycled Feasibility, $\mathcal{BRF}_{N}^{(w)}$, where $w\geq 2$}
    \label{alg:berf_n}
		\State {\bf [Setup:]} Same as in Algorithm \ref{alg:berf_b}. 
		\For{each system $i\in\Gamma$}
		\State [{\bf Initialization}:] 
		\begin{itemize}
            \item Set ${\rm ON}=L^{(w)}$ and ${\rm ON}_\ell=\{1,\ldots,d_\ell^{(w)}\}$ for $\ell\in {\rm ON}$. 
            \item Obtain $r_i, \bar{Y}_{i\ell}(r_i), {\rm LAST}_{i\ell}, v_{i\ell}^{\rm LB}, v_{i\ell}^{\rm UB}$, ${\rm SEED}^y_i$, and ${\rm SEED}^u_i$ for $i\in\Gamma$ and $\ell=1,\ldots,s$ from $\mathcal{BRF}^{(1)}$ (if $w=2$) or $\mathcal{BRF}_N^{(w-1)}$ (if $w\geq 3$). 
		\end{itemize}
		\State [{\bf Initial Feasibility Check} for $\mathcal{BRF}_N^{(w)}$:] 
        \For{each constraint $\ell\in{\rm ON}$}   	
        \For{each threshold $m\in {\rm ON}_\ell$}
        \State If $v_{i\ell}^{\rm UB}\leq h_{\ell,m}^{(w)}$ and $v_{i\ell}^{\rm LB}<h_{\ell,m}^{(w)}$, set $Z_{i\ell m}^{(w)}=1$ and ${\rm ON}_\ell={\rm ON}_\ell \setminus \{m\}$;
        \State Else if $v_{i\ell}^{\rm LB}\geq h_{\ell, m}^{(w)}$ and $v_{i\ell}^{\rm UB}>h_{\ell,m}^{(w)}$, set $Z_{i\ell m}^{(w)}=0$ and ${\rm ON}_\ell={\rm ON}_\ell \setminus \{m\}$;
        \State Else if $v_{i\ell}^{\rm UB}\leq h_{\ell, m}^{(w)}\leq v_{i\ell}^{\rm LB}$,
            \begin{itemize}[leftmargin=2.5cm]
                \item[] if ${\rm LAST}_{i\ell}={\rm UB}$, set $Z_{i\ell m}^{(w)}=0$ and ${\rm ON}_\ell={\rm ON}_\ell \setminus \{m\}$;
                \item[] if ${\rm LAST}_{i\ell}={\rm LB}$, set $Z_{i\ell m}^{(w)}=1$ and ${\rm ON}_\ell={\rm ON}_\ell \setminus \{m\}$.
            \end{itemize}
        \EndFor
        \State If ${\rm ON}_\ell=\emptyset$, set ${\rm ON}={\rm ON} \setminus \{\ell\}$.
    \EndFor
    \State [{\bf Stopping Condition} for $\mathcal{BRF}_N^{(w)}$:] Same as in the [{\bf Stopping Condition} for $\mathcal{BRF}_B^{(w)}$] from Algorithm \ref{alg:berf_b} except that we do not generate $U_{ir_i}\sim U(0,1)$. Go to [{\bf Feasibility Check} for $\mathcal{BRF}_N^{(w)}$] after the execution.

    \State [{\bf Feasibility Check} for $\mathcal{BRF}_N^{(w)}$:] 
    \For{each constraint $\ell\in{\rm ON}$}   	\State Set $v_{i\ell}^{\rm LB}=\max\left\{v_{i\ell}^{\rm LB}, \bar{Y}_{i\ell}(r_i)-H_\ell/r_i\right\}$. If $v_{i\ell}^{\rm LB}$ is updated, set ${\rm LAST}_{i\ell}={\rm LB}$.
        \State Set $v_{i\ell}^{\rm UB}=\min\left\{v_{i\ell}^{\rm UB}, \bar{Y}_{i\ell}(r_i)+H_\ell/r_i\right\}$. If $v_{i\ell}^{\rm UB}$ is updated, set ${\rm LAST}_{i\ell}={\rm UB}$.
        \For{each threshold $m\in {\rm ON}_\ell$}
        \State If $v_{i\ell}^{\rm UB}\leq h_{\ell,m}^{(w)}$, set $Z_{i\ell m}^{(w)}=1$ and ${\rm ON}_\ell={\rm ON}_\ell \setminus \{m\}$;
        \State If $v_{i\ell}^{\rm LB}\geq h_{\ell, m}^{(w)}$, set $Z_{i\ell m}^{(w)}=0$ and ${\rm ON}_\ell={\rm ON}_\ell \setminus \{m\}$.
        \EndFor
        \State If ${\rm ON}_\ell=\emptyset$, set ${\rm ON}={\rm ON}\setminus \{\ell\}$.
		\EndFor
        \State Go to [{\bf Stopping Condition} for $\mathcal{BRF}_N^{(w)}$].
		\EndFor%
	\end{algorithmic}
 }
\end{algorithm}

Note that the two heuristic versions $\mathcal{BRF}_B^{(w)}$ and $\mathcal{BRF}_N^{(w)}$ may differ in the number of observations required to conclude feasibility decisions. In addition, $\mathcal{BRF}_B^{(w)}$ will require more computational effort in generating the dummy Bernoulli variables $I_{i\ell mn}^{(w)}$ (although this is expected to be much smaller than for collecting the observations $Y_{i\ell n}$). 
 In general, our experiments show that $\mathcal{BRF}_B^{(w)}$ tends to require fewer observations compared with $\mathcal{BRF}_N^{(w)}$ (see Table \ref{tab:berfwsingle1} in Section \ref{subsec:MBeRFExp}). Although we do not prove the statistical validity of Algorithms  \ref{alg:berf_b} and \ref{alg:berf_n}, we do not observe the validity being violated in our experiments (see Section \ref{subsubsec:HeuristicValidity}).

\paragraph{Heuristic approach $\mathcal{BRF}_{BN}^{(w)}$.}
$\mathcal{BRF}_{BN}$ combines $\mathcal{BRF}_B$ and $\mathcal{BRF}_N$. Specifically, for a newly-added threshold $h_{\ell, m}^{(w)}$, $\mathcal{BRF}_{BN}$ first 
performs feasibility check by 
comparing $v_{i\ell}^{\rm LB}$ and $v_{i\ell}^{\rm U
B}$ with respect to $h_{\ell,m}^{(w)}$ (as in Algorithm \ref{alg:berf_n}) and then
with respect to $\sum_{n=1}^{r_i} I_{i\ell m n}^{(w)}/r_i$ (as in Algorithm \ref{alg:berf_b}).
As $\mathcal{BRF}_{BN}^{(w)}$ makes a feasibility decision for $h_{\ell, m}^{(w)}$ when either $\mathcal{BRF}_B^{(w)}$ or $\mathcal{BRF}_N^{(w)}$ concludes its feasibility check, $\mathcal{BRF}_{BN}$ is guaranteed to perform no worse than both $\mathcal{BRF}_{B}$ and $\mathcal{BRF}_{N}$.  
Algorithm \ref{alg:berf_bn} shows the statement of the $w$th pass $\mathcal{BRF}_{BN}^{(w)}$, where $w\geq 2$. 

\begin{algorithm}[h!]
{\fontsize{10}{13}\selectfont	
    \begin{algorithmic}
    \caption{The $w$th Pass of Procedure Bernoulli Recycled Feasibility, $\mathcal{BRF}_{BN}^{(w)}$, where $w\geq 2$}
    \label{alg:berf_bn}
		\State {\bf [Setup:]} Same as in Algorithm \ref{alg:berf_b}.
		\For{each system $i\in\Gamma$}
		\State [{\bf Initialization}:] 
		Same as in $\mathcal{BRF}_B^{(w)}$ in Algorithm \ref{alg:berf_b} except that we obtain $r_i$, $\bar{Y}_{i\ell}(r_i)$, ${\rm LAST}_{i\ell}$, $v_{i\ell}^{\rm LB}$, $v_{i\ell}^{\rm UB}$, ${\rm SEED}_i^y$, and ${\rm SEED}_i^u$ for $i\in \Gamma$ and $\ell=1,\ldots,s$ from ${\cal BRF}_{BN}^{(w-1)}$ if $w\geq 3$.
        \State [{\bf Initial Feasibility Check} for $\mathcal{BRF}_{BN}^{(w)}$:]
        \State Perform [{\bf Initial Feasibility Check} of $\mathcal{BRF}_N^{(w)}$] as in Algorithm \ref{alg:berf_n}. 
        \State Perform [{\bf Initial Feasibility Check} of $\mathcal{BRF}_B^{(w)}$] as in Algorithm \ref{alg:berf_b}. 
    \State [{\bf Stopping Condition} for $\mathcal{BRF}_{BN}^{(w)}$:] Same as in the [{\bf Stopping Condition} for $\mathcal{BRF}_B^{(w)}$] in Algorithm \ref{alg:berf_b} and go to [{\bf Feasibility Check} for $\mathcal{BRF}_{BN}^{(w)}$] after the execution.

    \State [{\bf Feasibility Check} for $\mathcal{BRF}_{BN}^{(w)}$:] 
    \For{each constraint $\ell\in{\rm ON}$}   	\State Set $v_{i\ell}^{\rm LB}=\max\left\{v_{i\ell}^{\rm LB}, \bar{Y}_{i\ell}(r_i)-H_\ell/r_i\right\}$. If $v_{i\ell}^{\rm LB}$ is updated, set ${\rm LAST}_{i\ell}={\rm LB}$.
        \State Set $v_{i\ell}^{\rm UB}=\min\left\{v_{i\ell}^{\rm UB}, \bar{Y}_{i\ell}(r_i)+H_\ell/r_i\right\}$. If $v_{i\ell}^{\rm UB}$ is updated, set ${\rm LAST}_{i\ell}={\rm UB}$.
        \For{each threshold $m\in {\rm ON}_\ell$}
        \State If $v_{i\ell}^{\rm UB}\leq h_{\ell,m}^{(w)}$, set $Z_{i\ell m}^{(w)}=1$ and ${\rm ON}_\ell={\rm ON}_\ell \setminus \{m\}$;
        \State Else if $v_{i\ell}^{\rm LB}\geq h_{\ell, m}^{(w)}$, set $Z_{i\ell m}^{(w)}=0$ and ${\rm ON}_\ell={\rm ON}_\ell \setminus \{m\}$;
	\State Else, 
    \State \hspace{0.8cm} Set $I_{i \ell m r_i}^{(w)}=1$ if $U_{i r_i} \le h_{\ell, m}^{(w)}$ and 0 otherwise.
            \begin{itemize}[leftmargin=2.5  cm]
                \item[] If $v_{i\ell}^{\rm UB}\leq \sum_{n=1}^{r_i} I_{i \ell m n}^{(w)}/r_i$, set $Z_{i\ell m}^{(w)}=1$ and ${\rm ON}_\ell={\rm ON}_\ell \setminus \{m\}$;
                \item[] If $v_{i\ell}^{\rm LB}\geq \sum_{n=1}^{r_i} I_{i \ell m n}^{(w)}/r_i$, set $Z_{i\ell m}^{(w)}=0$ and ${\rm ON}_\ell={\rm ON}_\ell \setminus \{m\}$.
            \end{itemize}
        \EndFor
        \State If ${\rm ON}_\ell=\emptyset$, set ${\rm ON}={\rm ON}\setminus \{\ell\}$.
		\EndFor
        \State Go to [{\bf Stopping Condition} for $\mathcal{BRF}_{BN}^{(w)}$].

	\EndFor%
	\end{algorithmic}
 }
\end{algorithm}

\section{Statistical Validity} 
\label{sec:statistical validity}

In this section, we prove the statistical validity of the first-pass of $\mathcal{MPB}$, i.e., $\mathcal{BRF}^{(1)}$. Section~\ref{subsec:oneone} proves the result for a single system and Section~\ref{subsec:manymany} generalizes the proof for multiple systems.  

\subsection{Single System} 
\label{subsec:oneone}

First, we consider one system $i$, one constraint $\ell$, and one threshold $h_{\ell,m}^{(1)}$ during the first pass. 
For simplicity, we drop subscripts $i, m$, and superscript $(1)$ from $p_{i\ell}$, ${\rm CD}_{i\ell}$, $Y_{i\ell n}$, $U_{in}$, $h_{\ell, m}^{(1)}$, and $I_{i \ell mn}^{(1)}$.

We begin with Lemma \ref{thm:single_system_single_threshold}, which shows the statistical validity of $\mathcal{BRF}^{(1)}$ for the case where a single system, a single constraint, and one particular threshold are considered. 

\begin{lemma} 
\label{thm:single_system_single_threshold}
For a single system, a constraint $\ell$ with threshold $h$, $\mathcal{BRF}^{(1)}$ guarantees $\Pr\left( {\rm CD}_\ell(h_\ell) \right)  \ge 1 - \beta_\ell$.
\end{lemma}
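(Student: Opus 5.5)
We reduce the monitoring statistic to a simple random walk and then apply the gambler's ruin formula. Let $X_n = Y_{\ell n} - I_{\ell n}$. By Assumption \ref{assump:bern} and the independence of the dummy variables from the data and across stages, the $X_n$ are i.i.d. on $\{-1,0,1\}$ with
\[
q := \Pr(X_n=1) = p_\ell(1-h), \qquad r := \Pr(X_n=-1)=(1-p_\ell)h .
\]
The partial sum $S_n=\sum_{j\le n}X_j$ moves only by unit steps (or stays put). It therefore exits $(-H_\ell,H_\ell)$ exactly at $-H_\ell$ or $H_\ell$. The exit time is a.s. finite whenever $q+r>0$, which holds since $0<h<1$: for instance, $H_\ell$ consecutive $+1$ or $-1$ steps occur eventually with probability one. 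Ignoring the zero steps does not change which boundary is hit. So the exit side has the same law as for the embedded walk that steps $+1$ with probability $q/(q+r)$ and $-1$ with probability $r/(q+r)$, started at $0$ with absorbing barriers $\pm H_\ell$.

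Apply the gambler's ruin formula to this embedded walk. Let $\rho = r/q = (1-p_\ell)h/\{p_\ell(1-h)\}$, which is exactly the odds ratio in the definition of $D_\ell(h)$. The walk starts at distance $H_\ell$ from each barrier, with total width $2H_\ell$. The probability of hitting $+H_\ell$ before $-H_\ell$ is then
\[
\Pr(\text{infeasible}) = \frac{\rho^{H_\ell}-\rho^{2H_\ell}}{1-\rho^{2H_\ell}} = \frac{\rho^{H_\ell}}{1+\rho^{H_\ell}} = \frac{1}{1+\rho^{-H_\ell}}, \qquad \rho\neq 1 .
\]
The edge case $p_\ell=0$ gives $q=0$, so the walk can only move down and the error probability is $0$. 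Symmetrically, $p_\ell=1$ gives $r=0$ and the walk can only move up.

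Now split into cases according to the sets of Section \ref{subsec:CorrectDecision}.
\begin{itemize}
\item If $i\in A_\ell(h)$, every decision is correct and there is nothing to prove.
\item If $i\in D_\ell(h)$, then $\rho\ge\theta_\ell>1$. An incorrect decision means declaring infeasible, which has probability $1/(1+\rho^{H_\ell}) \le 1/(1+\theta_\ell^{H_\ell}) \le \beta_\ell$. The last inequality is the choice of $H_\ell$ in \eqref{eqn:H}.
\item If $i\in U_\ell(h)$, then $1/\rho \ge \theta_\ell$. By symmetry, the probability of declaring feasible (hitting $-H_\ell$ first) is $1/(1+\rho^{-H_\ell}) \le 1/(1+\theta_\ell^{H_\ell}) \le \beta_\ell$.
\end{itemize}
In each case $\Pr({\rm CD}_\ell(h))\ge 1-\beta_\ell$.

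The main care point is justifying the reduction to the embedded walk. We must check that the stopped walk does not depend on the other thresholds or constraints: marginally, for fixed $\ell,h$, the pairs $(Y_{\ell n},I_{\ell n})$ are i.i.d. across $n$, whatever correlations exist across $m$ or $\ell$. We must also check that the lazy zero steps affect neither the exit side nor finiteness. The algebra of the ruin probability is routine.
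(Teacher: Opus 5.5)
Your route is the paper's: the lazy walk $\sum_n (Y_{\ell n}-I_{\ell n})$ with up/down probabilities $p_\ell(1-h)$ and $(1-p_\ell)h$, the gambler's-ruin absorption probability in terms of $\rho=(1-p_\ell)h/\{p_\ell(1-h)\}$, monotonicity in $\rho$ on $D_\ell(h)$ and $U_\ell(h)$, and the defining inequality \eqref{eqn:H} for $H_\ell$. Your use of the embedded jump chain and your check that the exit time is finite are fine, and a bit more explicit than the paper.

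However, your displayed formula attaches the ruin probability to the wrong barrier. Since $\rho=r/q$ is the down-step probability divided by the up-step probability,
$\frac{\rho^{H_\ell}-\rho^{2H_\ell}}{1-\rho^{2H_\ell}}=\frac{\rho^{H_\ell}}{1+\rho^{H_\ell}}$
is the probability of hitting $-H_\ell$ first, i.e.\ declaring \emph{feasible}. This is exactly how the paper states it. The probability of hitting $+H_\ell$ first is $\frac{1-\rho^{H_\ell}}{1-\rho^{2H_\ell}}=\frac{1}{1+\rho^{H_\ell}}$. A quick sanity check: for $\rho>1$ the walk drifts downward, so hitting $+H_\ell$ must have probability below $1/2$, but your display gives more than $1/2$.

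Your case analysis actually uses the correct values. It takes $1/(1+\rho^{H_\ell})$ for declaring infeasible when $i\in D_\ell(h)$, and $1/(1+\rho^{-H_\ell})$ for declaring feasible when $i\in U_\ell(h)$. This contradicts your display. Read literally, the display would make the error probability on $D_\ell(h)$ equal to $\rho^{H_\ell}/(1+\rho^{H_\ell})>1/2$, and the $D$-case bound would fail.

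The fix is to relabel the display as $\Pr(\text{feasible})$. You can also note that $\rho\neq 1$ automatically holds on $D_\ell(h)\cup U_\ell(h)$, because $\theta_\ell>1$. With that correction your proof is complete and coincides with the paper's.
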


\begin{proof}
    \renewcommand{\qedsymbol}{}
    The main idea of our procedure is to compare the Bernoulli observations from the system on the constraint, which have a probability $p_\ell$, with Bernoulli data from a dummy system that has a probability $h_\ell$. The comparison between the two systems is designed similar to Bernoulli selection procedures such as \cite{tamhane1985some}.

    The simple random walk process $S_\ell(r) = \sum_{n=1}^{r} \left(Y_{\ell n} - I_{\ell n}\right)$ tracks the cumulative difference in observations.
The state transitions depend on the outcomes of $Y_{\ell n}$ and $I_{\ell n}$. Specifically, when the current state of $S_\ell(r)$ is $a$, it transits to $a+1$ when $Y_{\ell n}$ is 1 and $I_{\ell n}$ is 0 and transits to $a-1$ when $Y_{\ell n}$ is 0 and $I_{\ell n}$ is 1. Furthermore, it stays at $a$ when $Y_{\ell n}$ and $I_{\ell n}$ are identical. Consequently, the probabilities of transiting from $a$ to $a+1$ and from $a$ to $a-1$ are given by $p_\ell(1-h_\ell)$ and $(1-p_\ell)h_\ell$, respectively, and the probability of staying at $a$ is $p_\ell h_\ell+(1-p_\ell)(1-h_\ell)$. 



For our analysis, we focus on the slippage configuration (SC), which means $
{(1-p_\ell)h_\ell \over p_\ell(1-h_\ell)} = \theta_\ell$ or $
{p_\ell (1-h_\ell) \over (1-p_\ell) h_\ell} = \theta_\ell$, depending on whether the system is desirable or unacceptable (see Section \ref{subsec:CorrectDecision}). The SC is expected to be the least-favorable configuration and we discuss the probability of correct decision when the system is desirable or unacceptable separately. 

By Section 3.6.1 of \citet{TaylorKarlin1994}, the lower absorption probability of $S_\ell(r)$ to $-H_\ell$ (rather than $H_\ell$) can be calculated as  
\begin{align}
u_{H_\ell}
&=
\left\{
\begin{array}{ll}
0.5 
& \quad \text{if } \frac{(1-p_\ell)h_\ell}{p_\ell (1-h_\ell)} = 1, \\[6pt]
\dfrac{\left(\frac{(1-p_\ell)h_\ell}{p_\ell (1-h_\ell)}\right)^{H_\ell}- \left(  \frac{(1-p_\ell) h_\ell}{p_\ell (1-h_\ell)}\right)^{2 H_\ell} }{1 - \left( \frac{(1-p_\ell)h_\ell}{p_\ell (1-h_\ell)} \right)^{2H_\ell}} 
& \quad \text{if }  \frac{(1-p_\ell) h_\ell}{p_\ell (1-h_\ell)} \ne 1,
\end{array}
\right\}
=
\dfrac{\left(  \frac{(1-p_\ell) h_\ell}{p_\ell (1-h_\ell)}\right)^{H_\ell}}{1+\left(  \frac{(1-p_\ell) h_\ell}{p_\ell (1-h_\ell)}\right)^{H_\ell}}.
\label{eqn:AbsorptionProb}
\end{align}
Note that $u_{H_\ell}$ from Equation \eqref{eqn:AbsorptionProb} is increasing in $\frac{(1-p_\ell)h_\ell}{p_\ell(1-h_\ell)}$.
When the system is desirable, i.e., $
{(1-p_\ell) h_\ell \over p_\ell (1-h_\ell)} \ge \theta_\ell$, the probability of correct decision corresponds to the lower absorption probability to $-H_\ell$, which can be calculated following Equation (\ref{eqn:AbsorptionProb}) as at least
${\theta^{H_\ell} \over 1 + \theta^{H_\ell}}$.  
Similarly, when the system is unacceptable, i.e., $
{p_\ell (1-h_\ell) \over (1-p_\ell)h_\ell} \ge \theta_\ell$, 
the probability of correct decision is the upper absorption probability to $H$, which from Equation (\ref{eqn:AbsorptionProb}) is at least $1-\frac{\theta^{-H_\ell}}{1+\theta^{-H_\ell}}={\theta^{H_\ell} \over 1 + \theta^{H_\ell}}$.
When the system is acceptable, any decision is a correct decision. By putting all three cases together, we get 
\[
\Pr\left( {\rm CD}_\ell(h_\ell) \right)  \ge {\theta^{H_\ell} \over 1 + \theta^{H_\ell}} \ge 1 - \beta_\ell, 
\]
where the second inequality holds because $H_\ell$ satisfies \eqref{eqn:H}. 
\hspace{6cm} $\square$
\end{proof}
\vspace{-0.5cm}

Now, we prove the statistical validity of $\mathcal{BRF}^{(1)}$ for a single system with thresholds $h_{\ell,1}^{(1)} < h_{\ell,2}^{(1)} < \cdots < h_{\ell, d_\ell^{(1)}}^{(1)}$ on constraint $\ell$ in Lemma \ref{thm:single_system_multiple_threshold}. We add the subscript $m$ back to all notation while still dropping the subscript $i$ and superscript $(1)$. 

\begin{lemma} 
\label{thm:single_system_multiple_threshold}
For a single system and a single constraint with thresholds $h_{\ell,1},h_{\ell,2},\ldots,h_{\ell, d_\ell}$, where $d_\ell\geq 1$, $\mathcal{BRF}^{(1)}$ guarantees 
\[
\Pr\left( {\rm CD}_\ell \right) = \Pr\left( \cap_{m=1}^{d_\ell}{\rm CD}_\ell(h_{\ell, m}) \right)  \ge \left\{ \begin{array}{ll} 
1 - \beta_\ell, & \mbox{if } d_\ell=1; \\
1 - 2\beta_\ell, & \mbox{if } d_\ell\ge 2.
\end{array} \right.
\]
\end{lemma}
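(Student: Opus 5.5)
The case $d_\ell=1$ is exactly Lemma \ref{thm:single_system_single_threshold}, so I assume $d_\ell\ge 2$. The plan is to show that among all thresholds only two can produce an error, and that the errors at all other thresholds are implied by errors at these two. This makes a union bound over just two events suffice.

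First partition the thresholds by the position of $p_\ell$. Since $D_\ell(h)$ requires $h$ large relative to $p_\ell$ and $U_\ell(h)$ requires $h$ small, there is an index $m^*$ with the following properties: thresholds $h_{\ell,m}$ with $m\le m^*$ are such that $p_\ell$ lies in $U_\ell(h_{\ell,m})$ or $A_\ell(h_{\ell,m})$; thresholds with $m>m^*$ have $p_\ell$ in $D_\ell(h_{\ell,m})$ or $A_\ell(h_{\ell,m})$. More precisely, let $m_U$ be the largest $m$ with the system in $U_\ell(h_{\ell,m})$, and let $m_D$ be the smallest $m$ with the system in $D_\ell(h_{\ell,m})$. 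Monotonicity of the odds-ratio in $h$ gives that the system is in $U_\ell$ exactly for $m\le m_U$ and in $D_\ell$ exactly for $m\ge m_D$. The correct decision event is therefore
\[
{\rm CD}_\ell=\Big(\cap_{m\le m_U}\{\text{infeasible at } h_{\ell,m}\}\Big)\cap\Big(\cap_{m\ge m_D}\{\text{feasible at } h_{\ell,m}\}\Big).
\]

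The key step is a monotone coupling across thresholds. Because $I_{\ell mn}=\mathbb{I}(U_n\le h_{\ell,m})$ uses a common uniform $U_n$, we have $I_{\ell mn}\le I_{\ell m'n}$ for $m<m'$. Hence the walks satisfy $S_{\ell m}(r)\ge S_{\ell m'}(r)$ for every $r$, pathwise. Suppose the system is declared feasible at $h_{\ell,m}$ with $m\le m_U$, meaning $S_{\ell m}$ hits $-H_\ell$ before $H_\ell$. Then I claim $S_{\ell m_U}$ also hits $-H_\ell$ before $H_\ell$. Since $S_{\ell m_U}\le S_{\ell m}$ pathwise, it reaches $-H_\ell$ no later than $S_{\ell m}$ does. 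It also cannot reach $H_\ell$ earlier, because that would force $S_{\ell m}\ge H_\ell$ at that time. Care is needed with exit at the same step: the walks have increments in $\{-1,0,1\}$, so exits occur exactly at $\pm H_\ell$, and the order argument works with first passage times. Thus the error at any $m\le m_U$ implies the error at $m_U$. Symmetrically, an infeasible declaration at some $m\ge m_D$ implies an infeasible declaration at $m_D$.

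Therefore $({\rm CD}_\ell)^c\subseteq \{\text{error at } h_{\ell,m_U}\}\cup\{\text{error at } h_{\ell,m_D}\}$, with either event absent if the corresponding index does not exist. By Lemma \ref{thm:single_system_single_threshold} applied to each of these thresholds (its marginal walk is the same as in the single-threshold case), each error has probability at most $\beta_\ell$. This gives $\Pr({\rm CD}_\ell)\ge 1-2\beta_\ell$.

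The main obstacle is making the pathwise dominance of first-exit outcomes rigorous, that is, showing that the ordering $S_{\ell m}\ge S_{\ell m'}$ transfers to the ordering of which boundary is hit first. I would handle this via the stopping times $\tau^\pm_m$ (the first hitting times of $\pm H_\ell$), showing $\tau^-_{m'}\le\tau^-_m$ and $\tau^+_{m'}\ge\tau^+_m$.
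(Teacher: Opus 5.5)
Your proof is correct and follows the paper's argument. The common uniform $U_n$ orders the walks pathwise, so an error at any unacceptable (desirable) threshold forces an error at a single representative threshold, and a Bonferroni bound over these two events gives $1-2\beta_\ell$. The only difference is the choice of representatives: the paper inserts auxiliary slippage-configuration thresholds $h_\ell^L, h_\ell^U$ from \eqref{eqn:LBUB} and splits into three cases, whereas you use the extreme actual thresholds $h_{\ell,m_U}$ and $h_{\ell,m_D}$. Your choice covers every configuration in one argument, including acceptable thresholds at either end, which the paper's case split does not list, and it states the first-passage ordering $\tau^-$, $\tau^+$ explicitly rather than leaving it implicit.
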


\begin{proof}
    \renewcommand{\qedsymbol}{}
    If $d_\ell=1$, then 
\[
\Pr\left( {\rm CD}_\ell \right) = \Pr\left( \cap_{m=1}^{d_\ell}{\rm CD}_{\ell}(h_{\ell, m}) \right)  = \Pr\left( {\rm CD}(h_{\ell, 1}) \right) \ge 1 - \beta_\ell,
\]
where the inequality holds due to Lemma~\ref{thm:single_system_single_threshold}.

If $d_\ell \ge 2$, there are three possible cases regarding the feasibility of the system. 
\begin{itemize}
    \item[] {\bf Case 1:}  The system is desirable with respect to all thresholds. Since $h_{\ell,1} < h_{\ell,2} < \cdots < h_{\ell,d_\ell}$ and we use the same random number $U_{n}$ to generate the Bernoulli random variables $I_{\ell mn}$ for $m=1,2,\ldots,d_\ell$, we have $I_{\ell 1n} \le I_{\ell 2n} \le \cdots \le I_{\ell d_\ell n}$, for $n=1,2,\ldots$. It follows that
    \begin{equation} \label{eqn:dummy}
    \sum_{n=1}^{r}\left(Y_{\ell n} -I_{\ell 1n}\right) \ge \sum_{n=1}^{r}\left(Y_{\ell n} -I_{\ell 2n}\right)\ge \cdots \ge \sum_{n=1}^{r}\left(Y_{\ell n} -I_{\ell d n}\right).
    \end{equation}
    A correct decision with respect to $h_{\ell, m}$ is made if $\sum_{n=1}^{r}(Y_{\ell n} -I_{\ell mn})$ reaches $-H_\ell$ before $H_\ell$. Equation \eqref{eqn:dummy} implies that if we have a correct decision at $h_{\ell, 1}$, i.e.,  if $\sum_{n=1}^{r}(Y_{\ell n} -I_{\ell 1n})$ reaches $-H_\ell$ before $H_\ell$, then we also have correct decisions for the other thresholds $h_{\ell, 2}< \cdots < h_{\ell, d_\ell}$. That is, we have ${\rm CD}(h_{\ell,1}) \subseteq {\rm CD}(h_{\ell, 2}) \subseteq \cdots \subseteq {\rm CD}(h_{\ell, d_\ell})$. Therefore, Lemma~\ref{thm:single_system_single_threshold} yields
    \[ 
    \Pr\left( {\rm CD}_\ell \right)=\Pr\Big(\cap_{m=1}^{d_\ell} {\rm CD}_\ell(h_{\ell, m})\Big) = \Pr\Big( {\rm CD}_\ell(h_{\ell,1})\Big) \ge 1 - \beta_\ell \ge 1 - 2 \beta_\ell.
    \] 
    \item[] {\bf Case 2:} The system is unacceptable with respect to all thresholds. A correct decision occurs when $\sum_{n=1}^{r}(Y_{\ell n} -I_{\ell mn})$ reaches $H_\ell$ before $-H_\ell$. Similarly to {\bf Case 1}, if a correct decision is made for $h_{\ell, d_\ell}$, correct decisions are also made for thresholds $h_{\ell, 1}\leq \cdots\leq h_{\ell, d_\ell}$, i.e., ${\rm CD}_\ell(h_{\ell, d_\ell}) \subseteq {\rm CD}_\ell(h_{\ell,d_\ell-1}) \subseteq \cdots \subseteq {\rm CD}_\ell(h_{\ell,1})$. Thus, it follows from Lemma~\ref{thm:single_system_single_threshold} that
    \[ 
    \Pr\left( {\rm CD}_\ell \right)=\Pr\Big(\cap_{m=1}^{d_\ell} {\rm CD}_\ell(h_{\ell,m})\Big) = \Pr\Big( {\rm CD}_\ell(h_{\ell,d_\ell})\Big) \ge 1 - \beta_\ell \ge 1 - 2\beta_\ell.
    \]
    \item[] {\bf Case 3:} The system is unacceptable with respect to thresholds $h_{\ell, 1} < \cdots < h_{\ell, \underline{\kappa}}$ and is desirable with respect to thresholds $h_{\ell, \overline{\kappa}} < \cdots < h_{\ell,d_\ell}$, where $1\leq \underline{\kappa} <  \overline{\kappa}\leq d_\ell$.
Suppose that we add two additional thresholds $h_\ell^L$ and $h_\ell^U$ such that 
\begin{equation} 
\label{eqn:LBUB}
\frac{h_\ell^U (1-p_\ell)}{(1-h_\ell^U)p_\ell}  = \theta_\ell \quad \mbox{and} \quad {(1-h_\ell^L)p_\ell \over h_\ell^L(1-p_\ell)} =\theta_\ell,
\end{equation}
i.e., these thresholds have odds-ratios with respect to $p_\ell$ that exactly equal $\theta_\ell$. Then we have that the system is desirable for $h_\ell^U$, unacceptable for $h_\ell^L$, and 
\[
h_{\ell, 1} < \cdots < h_{\ell, \underline{\kappa}} \le h_\ell^L < h_{\ell,\underline{\kappa}+1} < \cdots < h_{\ell,\overline{\kappa}-1} < h_\ell^U \le h_{\ell,\overline{\kappa}} < \cdots < h_{\ell,d_\ell}.
\]
A correct decision occurs when the system is deemed infeasible with respect to $h_{\ell,1}, \ldots, h_{\ell,\underline{\kappa}}$ but feasible with respect to $h_{\ell,\overline{\kappa}}, \ldots, h_{\ell,d_\ell}$. Consider dummy Bernoulli random variables $I_{\ell n}^{L}$ and $I_{\ell n}^{U}$ that are generated based on thresholds $h_\ell^{L}$ and $h_\ell^{U}$, respectively, applying the same random number $U_{n}$ used for generating $I_{\ell 1n},\ldots, I_{\ell d_\ell n}$, which yields $I_{\ell 1n}\leq \cdots\leq I_{\ell \underline{\kappa} n}\leq I_{\ell n}^L\leq I_{\ell (\underline{\kappa}+1) n}\leq \cdots \leq I_{\ell (\overline{\kappa}-1) n}\leq I_{\ell n}^U \leq I_{\ell \overline{\kappa} n}\leq \cdots \leq I_{\ell d_\ell n}$ for $n=1,2,\ldots$.
It follows that ${\rm CD}_\ell(h_\ell^{L}) \subseteq {\rm CD}_\ell(h_{\ell, \underline{\kappa}})  \subseteq \cdots  \subseteq {\rm CD}_\ell(h_{\ell,1})$ and ${\rm CD}_\ell(h_\ell^{U}) \subseteq {\rm CD}_\ell(h_{\ell,\overline{\kappa}}) \subseteq \cdots \subseteq  {\rm CD}_{\ell}(h_{\ell,d_\ell})$. For thresholds $h_{\ell,\underline{\kappa}+1}, \ldots, h_{ \ell, \overline{\kappa}-1}$, the system is acceptable, meaning that any feasibility decision is a correct decision. Therefore, the Bonferroni inequality and Lemma \ref{thm:single_system_single_threshold} yield
\begin{align*}
\Pr\left( {\rm CD}_\ell \right)  &= \Pr\Big(\cap_{m=1}^{d_\ell} {\rm CD}_\ell(h_{\ell, m})\Big) 
\geq \Pr\Big( {\rm CD}_\ell(h_\ell^{L}) \cap {\rm CD}_\ell(h_\ell^{U})\Big) \\
&\ge \Pr\Big( {\rm CD}_\ell(h_\ell^{L})\Big) + \Pr\Big({\rm CD}_\ell(h_\ell^{U})\Big) -1 \ge 1 - 2\beta_\ell. \tag*{$\square$}
\end{align*}
\end{itemize}
\end{proof}
\vspace{-1.3cm}
\begin{remark}
\label{remark:SlippageConfig}
    Due to Equation \eqref{eqn:LBUB}, the two ``most difficult'' thresholds for an odds-ratio $\theta_\ell>1$ on constraint $\ell$ with probability $p_{\ell}$ can be expressed with the two functions 
    \begin{equation} 
\label{eqn:thresholds}
f_{\ell}^{L}(p_{\ell}, \theta_\ell) = \frac {p_{\ell}}{p_{\ell} + (1-p_{\ell})\theta_\ell} \quad \mbox{and} \quad f_{\ell}^{U}(p_{\ell}, \theta_\ell) = \frac{p_{\ell}\theta_\ell}{p_{\ell} (\theta_\ell-1) + 1}
\end{equation} 
that output the threshold that system $i$ is at the boundary of being desirable and unacceptable, respectively. Although $f_\ell^L(p_{\ell},\theta_\ell)$ and $f_\ell^U(p_{\ell},\theta_\ell)$ both produce the ``most difficult'' threshold values for the constraint with an odds-ratio $\theta_\ell$, they are not symmetric in terms of $p_{\ell}$. For example, when $p_{\ell}=0.15$, we have $f_\ell^L(0.15, 1.2)=0.1282$ and $f_\ell^U(0.15, 1.2)=0.1748$, and hence the threshold determined by $f_\ell^L(p_{\ell}, \theta_\ell)$ is closer to $p_{\ell}=0.15$ than that determined by $f_\ell^U(p_{\ell}, \theta_\ell)$. 
In particular, due to the definition of the odds-ratio IZ parameter, the threshold determined by $f_\ell^L(p_{\ell}, \theta_\ell)$ is closer (further) from $p_{\ell}$ compared with the threshold determined by $f_\ell^U(p_{\ell}, \theta_\ell)$ when $p_{\ell}$ is smaller (larger) than 0.5.
\end{remark}

Finally, we prove the statistical validity of $\mathcal{BRF}^{(1)}$ for a single system with $s$ constraints in Theorem \ref{thm:single_system_multiple}. We add the superscript (1) back to all notation, while still dropping the subscript $i$. 
\begin{theorem}
\label{thm:single_system_multiple}
For a single system and $s$ constraints with thresholds $h_{\ell,1}^{(1)},h_{\ell,2}^{(1)},\ldots,h_{\ell, d_\ell^{(1)}}^{(1)}$, where $\ell=1,2,\ldots, s$, $\mathcal{BRF}^{(1)}$ guarantees $\Pr (\cap_{\ell=1}^s {\rm CD}_\ell)  \ge 1 - \beta.$
\end{theorem}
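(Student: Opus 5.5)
The plan is to combine Lemma~\ref{thm:single_system_multiple_threshold} across constraints with the Bonferroni inequality, and then check that the allocation of $\beta_\ell$ in \eqref{eqn:betaell} makes the per-constraint error bounds sum to at most $\beta$. No independence across constraints is needed. This matters because the observations $Y_{\ell n}$ may be cross-correlated across $\ell$ within a replication, and the dummy variables $I_{\ell m n}$ for different constraints are all generated from the common uniform $U_n$. Bonferroni sidesteps both sources of dependence.

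First I apply Lemma~\ref{thm:single_system_multiple_threshold} to each constraint $\ell$ separately, with $d_\ell = d_\ell^{(1)}$. The only thing that lemma uses is the marginal law of the sequence $\{(Y_{\ell n}, U_n)\}_{n\ge1}$ for that constraint. Under Assumption~\ref{assump:bern} these pairs are i.i.d. in $n$, and $U_n$ is independent of the $Y$'s, so the random-walk analysis of Lemma~\ref{thm:single_system_single_threshold} goes through unchanged. Define the per-constraint error $e_\ell = \Pr({\rm CD}_\ell^c)$. The lemma then gives $e_\ell \le \beta_\ell$ if $d_\ell^{(1)} = 1$, and $e_\ell \le 2\beta_\ell$ if $d_\ell^{(1)} \ge 2$. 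In compact form, $e_\ell \le \min\{d_\ell^{(1)}, 2\}\,\beta_\ell$.

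Next I write
\[
\Pr\Big(\cap_{\ell=1}^s {\rm CD}_\ell\Big) \ge 1 - \sum_{\ell=1}^s \Pr({\rm CD}_\ell^c) \ge 1 - \sum_{\ell=1}^s \min\{d_\ell^{(1)},2\}\,\beta_\ell ,
\]
and it remains to show that this last sum is at most $\beta$ under both choices in \eqref{eqn:betaell}. Let $\bar d_\ell = \sum_{u=1}^w d_\ell^{(u)}$, so that $\bar d_\ell \ge d_\ell^{(1)}$ and hence $\min\{d_\ell^{(1)},2\} \le \min\{\bar d_\ell, 2\}$.

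For choice $(i)$: if $\bar d_\ell = 1$, then $\beta_\ell = \beta/s$ and the term is at most $\beta/s$. If $\bar d_\ell > 1$, then $\beta_\ell = \beta/(2s)$ and the term is at most $2\cdot\beta/(2s) = \beta/s$. Summing over the $s$ constraints gives at most $\beta$. For choice $(ii)$ the sum is at most $\sum_\ell \min\{\bar d_\ell,2\}\,\beta/D = \beta$ by the definition of $D$. Constraints with $d_\ell^{(1)} = 0$ contribute nothing, since ${\rm CD}_\ell$ is then the whole sample space. The same convention covers Remark~\ref{remark:beta_all}: if $\bar d_\ell$ is overestimated, $\beta_\ell$ only gets smaller, so the bound still holds.

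The step I expect to need the most care is the first one. I have to confirm that Lemma~\ref{thm:single_system_multiple_threshold} holds for each constraint marginally, even though the walks for different $\ell$ share the same $U_n$ and are driven by cross-correlated $Y$'s. I also have to handle the bookkeeping in the case where a threshold appears only in later passes, so that $d_\ell^{(1)} < \bar d_\ell$. The accounting above shows this case only makes the bound more conservative.
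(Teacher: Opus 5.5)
Your proposal is correct and follows the paper's proof essentially verbatim. You apply Lemma~\ref{thm:single_system_multiple_threshold} to each constraint, combine the constraints with the Bonferroni inequality, and check that both allocations in \eqref{eqn:betaell} make the per-constraint error bounds sum to $\beta$. Your compact bound $\Pr({\rm CD}_\ell^c)\le \min\{d_\ell^{(1)},2\}\,\beta_\ell$ is slightly tidier bookkeeping: it also covers the cases $d_\ell^{(1)}<\sum_{u=1}^w d_\ell^{(u)}$ and $d_\ell^{(1)}=0$, which the paper passes over by implicitly treating the total threshold count as $d_\ell^{(1)}$.
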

\begin{proof}

\renewcommand{\qedsymbol}{}

Consider $\beta_\ell$ determined by $(i)$ in Equation \eqref{eqn:betaell}. 
Then, from Lemma \ref{thm:single_system_multiple_threshold}, $\Pr\left( {\rm CD}_{\ell} \right) \ge 1 - \beta_\ell = 1 - \beta/s$  for constraint $\ell$ with $d_\ell^{(1)}=1$ and 
$\Pr\left( {\rm CD}_{\ell} \right)  \ge 1 - 2 \beta_\ell = 1 - 2 (\beta/(2s)) = 1 - \beta/s$ for constraint $\ell$ with $d_\ell >1$. Thus, it follows from the Bonferroni inequality that 
\[
\Pr\left( \cap_{\ell=1}^s {\rm CD}_{\ell} \right) \ge 1 - \sum_{\ell=1}^s \left(1 - \Pr\left( {\rm CD}_\ell \right) \right) \ge 1 - \sum_{\ell=1}^s {\beta \over s} = 1 - \beta.
\]

Next, we consider $\beta_\ell$ as in $(ii)$ in Equation \eqref{eqn:betaell}. Let $s_1$ ($s_2$) be the number of constraints with one threshold (two or more thresholds). Then $\beta_\ell = \beta/D$ where $D=s_1+2s_2$. From Lemma \ref{thm:single_system_multiple_threshold}, for constraint $\ell$ with $d_\ell=1$, we have $\Pr\left( {\rm CD}_{\ell} \right) \ge 1 - \beta/D$, and for constraint $\ell$ with $d_\ell >1$, we have  
$\Pr\left( {\rm CD}_{\ell} \right)  \ge 1 - 2 \beta_\ell = 1 - 2 \beta/D$. Thus, the Bonferroni inequality yields
\begin{align*}
    \Pr\left( \cap_{\ell=1}^s {\rm CD}_{\ell} \right) \ge 1 - \sum_{\ell=1}^s \left(1 - \Pr\left( {\rm CD}_\ell \right)\right) \ge 1 -  s_1 {\beta \over D} - 2 s_2 {\beta \over D} = 1 - {(s_1 + 2s_2) \beta \over D } = 1 - \beta. 
    \tag*{$\square$}
\end{align*}
\end{proof}
\vspace{-0.8cm}
As discussed in Remark \ref{remark:berf}, if guaranteeing statistical validity is desired, the decision maker is advised to conduct the feasibility check for all possible thresholds in a single pass, i.e., by performing $\mathcal{BRF}$. We present the following corollary for the statistical validity of $\mathcal{BRF}$. The proof of Corollary \ref{cor:berf} is identical to the proof of Theorem \ref{thm:single_system_multiple} except that $d_\ell^{(1)}$ includes all thresholds. 
\begin{corollary}
\label{cor:berf}
    For a single system and $s$ constraints with thresholds $h_{\ell,1}^{(1)}, h_{\ell, 2}^{(1)}, \ldots, h_{\ell,d_\ell^{(1)}}^{(1)}$ for $\ell=1,2,\ldots,s$, $\mathcal{BRF}$ guarantees $\Pr (\cap_{\ell=1}^s {\rm CD}_\ell)\geq 1-\beta$.
\end{corollary}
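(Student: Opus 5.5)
The plan is to reduce Corollary \ref{cor:berf} to Theorem \ref{thm:single_system_multiple}. The key observation is that $\mathcal{BRF}$ is, by definition, the first pass $\mathcal{BRF}^{(1)}$ in which the threshold sets $T_\ell^{(1)}$ already contain every threshold the decision maker will consider, and no later passes are run.

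First I will check that the mechanics of $\mathcal{BRF}$ coincide with those of $\mathcal{BRF}^{(1)}$ in Algorithm \ref{alg:berf}. The bookkeeping variables $v_{i\ell}^{\rm UB}$, $v_{i\ell}^{\rm LB}$ and ${\rm LAST}_{i\ell}$, and the stored seeds, never affect the first-pass decisions $Z_{i\ell m}^{(1)}$, so dropping them changes nothing. Each decision is therefore still the exit side of the random walk $\sum_{n=1}^{r}(Y_{\ell n}-I_{\ell m n})$ from $(-H_\ell,H_\ell)$. The $I_{\ell m n}$ are generated from a common uniform $U_n$, which is independent across $n$ and of the $Y$'s.

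Second, I will confirm that the error allocation is consistent. With $w=1$, the quantity $\sum_{u=1}^w d_\ell^{(u)}$ in \eqref{eqn:betaell} equals $d_\ell^{(1)}$, which is the full number of thresholds on constraint $\ell$. Hence $\beta_\ell$ is $\beta/s$ or $\beta/(2s)$ under choice $(i)$, and $\beta/D$ with $D=\sum_\ell \min\{d_\ell^{(1)},2\}$ under choice $(ii)$. These are exactly the values used in the proof of Theorem \ref{thm:single_system_multiple}, and $H_\ell$ is defined from them via \eqref{eqn:H}.

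Third, I will rerun the chain of results without change. Lemma \ref{thm:single_system_single_threshold} gives the per-threshold bound $\theta_\ell^{H_\ell}/(1+\theta_\ell^{H_\ell})\ge 1-\beta_\ell$. Lemma \ref{thm:single_system_multiple_threshold} uses the monotone coupling $I_{\ell 1n}\le\cdots\le I_{\ell d_\ell n}$ together with the two auxiliary thresholds $h_\ell^L,h_\ell^U$. It shows that at most two effective thresholds matter, so $\Pr({\rm CD}_\ell)\ge 1-\beta_\ell$ when $d_\ell=1$ and $\Pr({\rm CD}_\ell)\ge 1-2\beta_\ell$ otherwise, for any number of thresholds. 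A Bonferroni step over the constraints then yields $\Pr(\cap_\ell {\rm CD}_\ell)\ge 1-\beta$ under either choice in \eqref{eqn:betaell}.

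I expect no real obstacle. The only point needing care is that Lemma \ref{thm:single_system_multiple_threshold} places no upper limit on $d_\ell$. This holds because its Case 3 argument works for an arbitrary ordered list of thresholds, so including all thresholds at once costs nothing beyond the factor $2$ already built into $\beta_\ell$.
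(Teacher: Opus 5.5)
Your proposal is correct and matches the paper's approach. The paper says only that the proof of Corollary~\ref{cor:berf} is identical to that of Theorem~\ref{thm:single_system_multiple} once $d_\ell^{(1)}$ includes all thresholds, which is the reduction you describe: Lemma~\ref{thm:single_system_single_threshold}, then Lemma~\ref{thm:single_system_multiple_threshold} with no cap on $d_\ell$, then Bonferroni over constraints, with $\beta_\ell$ evaluated at $w=1$.
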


\subsection{Multiple systems}
\label{subsec:manymany}
Now we consider $k\ge 2$ systems. Theorem \ref{thm:BeRFStatisticalValidityMultiSys} proves the statistical validity of $\mathcal{BRF}^{(1)}$ in the context of multiple systems, multiple constraints, and multiple thresholds.

\begin{theorem}
\label{thm:BeRFStatisticalValidityMultiSys}
    $\mathcal{BRF}^{(1)}$ guarantees ${\rm PCD} \ge 1 - \alpha.$
\end{theorem}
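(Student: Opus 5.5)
Define, for each system $i\in\Gamma$, the system-level correct decision event $E_i=\cap_{\ell=1}^s {\rm CD}_{i\ell}$. Then ${\rm PCD}=\Pr(\cap_{i=1}^k E_i)$. Theorem~\ref{thm:single_system_multiple} applies to each system separately and gives $\Pr(E_i)\ge 1-\beta$ for every $i\in\Gamma$. We combine these per-system bounds in one of two ways, according to the two cases in the definition \eqref{eqn:beta} of $\beta$.

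\emph{Independent sampling.} Here $E_i$ is a measurable function of two ingredients only: the observation vectors $(Y_{i1n},\ldots,Y_{isn})_{n\ge1}$ and the uniforms $(U_{in})_{n\ge1}$ that generate the dummy variables $I_{i\ell mn}^{(1)}$. Under Assumption~\ref{assump:bern}, observation vectors from different systems are independent when CRN is not used. The random-number requirements stated before Algorithm~\ref{alg:berf} make the dummy variables independent across systems as well, since each system has its own seed ${\rm SEED}^u_i$. Therefore the $\sigma$-fields generated by $\{(Y_{i\cdot n},U_{in})\}_{n\ge 1}$ are mutually independent across $i$, and the events $E_1,\ldots,E_k$ are mutually independent. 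It follows that
\[
{\rm PCD}=\prod_{i=1}^k \Pr(E_i)\ge (1-\beta)^k = 1-\alpha,
\]
using $\beta=1-(1-\alpha)^{1/k}$.

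\emph{CRN.} Here the events $E_i$ may be dependent, because the systems share seeds. We therefore avoid independence and apply the Bonferroni inequality directly:
\[
{\rm PCD}\ge 1-\sum_{i=1}^k\bigl(1-\Pr(E_i)\bigr)\ge 1-k\beta = 1-\alpha,
\]
using $\beta=\alpha/k$.

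The only step that needs care is the independence argument in the first case. One must check two things. First, the decision for system $i$, including its random stopping time $r_i$, depends only on system $i$'s own data and dummy uniforms. This holds because Algorithm~\ref{alg:berf} processes each system separately and uses no information from other systems. Second, the dummy variables are generated independently across systems. A further point is that Theorem~\ref{thm:single_system_multiple} is a marginal statement, so it remains valid under CRN; its proof never uses independence between systems.
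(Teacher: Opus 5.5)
Your proof is correct and follows the paper's argument. Under independent sampling the per-system events are independent, so ${\rm PCD}=\prod_{i}\Pr(\cap_{\ell}{\rm CD}_{i\ell})\ge(1-\beta)^k=1-\alpha$, and under CRN the Bonferroni inequality gives ${\rm PCD}\ge 1-k\beta=1-\alpha$; both cases rest on Theorem~\ref{thm:single_system_multiple}, and your extra justification of cross-system independence (separate seeds, per-system stopping) just makes explicit what the paper takes for granted.
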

\begin{proof}
\renewcommand{\qedsymbol}{}
Since there are multiple systems, the nominal probability of error is given in Equation (\ref{eqn:beta}).

First, when systems are simulated independently, it follows from Theorem \ref{thm:single_system_single_threshold} that 
\begin{equation*}
{\rm PCD} =\Pr\left(\cap_{i=1}^k \cap_{\ell=1}^s {\rm CD}_{i\ell}\right)\\
= \prod_{i=1}^k \Pr\left(\cap_{\ell=1}^{s} {\rm CD}_{i\ell}\right) \\
\ge  (1 - \beta)^k = 1 - \alpha.
\end{equation*}

Next, when CRN is employed, the Bonferroni inequality and Theorem~\ref{thm:single_system_multiple} yield
\begin{align*}
    {\rm PCD} =\Pr\left(\cap_{i=1}^k \cap_{\ell=1}^s {\rm CD}_{i\ell}\right)\ge 1 - \sum_{i=1}^k \left( 1 - \Pr\left(\cap_{\ell=1}^{s} {\rm CD}_{i\ell}\right) \right)\ge 1 - \sum_{i=1}^k \beta = 1- k {\alpha \over k} = 1 - \alpha.
\tag*{$\square$}
\end{align*}
\end{proof}
\vspace{-0.8cm}

Finally, we present Corollary \ref{cor:berf_multisys} to show the statistical validity of $\mathcal{BRF}$. The proof of Corollary \ref{cor:berf_multisys} is straightforward due to Corollary \ref{cor:berf} and the same techniques as in the proof of Theorem \ref{thm:BeRFStatisticalValidityMultiSys}.
\begin{corollary}
\label{cor:berf_multisys}
    $\mathcal{BRF}$ guarantees ${\rm PCD}\geq 1-\alpha$.
\end{corollary}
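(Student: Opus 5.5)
The plan is to reduce Corollary~\ref{cor:berf_multisys} to the single-system guarantee of Corollary~\ref{cor:berf}, and then combine systems exactly as in the proof of Theorem~\ref{thm:BeRFStatisticalValidityMultiSys}. Since $\mathcal{BRF}$ is just $\mathcal{BRF}^{(1)}$ with every threshold the decision maker will ever consider placed in $T_\ell^{(1)}$, the quantities $\beta$, $\beta_\ell$ and $H_\ell$ are computed from \eqref{eqn:beta}, \eqref{eqn:betaell} and \eqref{eqn:H} with $w=1$. The event ${\rm CD}_{i\ell}$ is the intersection over this full threshold set. Corollary~\ref{cor:berf}, applied to each system $i\in\Gamma$ separately, then gives $\Pr(\cap_{\ell=1}^s {\rm CD}_{i\ell})\ge 1-\beta$ for every $i$.

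\textbf{Independent sampling.} Here I will argue that the events $E_i=\cap_{\ell=1}^s {\rm CD}_{i\ell}$ are mutually independent across $i$. Each $E_i$ is a measurable function only of system $i$'s observations $(Y_{i1n},\ldots,Y_{isn})_{n\ge1}$ and its dummy uniforms $(U_{in})_{n\ge1}$. By Assumption~\ref{assump:bern} and the independence of the seeds ${\rm SEED}^y_i$ and ${\rm SEED}^u_i$ across systems (the requirement stated for $I_{i\ell mn}$ when systems are simulated independently), these collections are independent across $i$. Hence
\[
{\rm PCD}=\prod_{i=1}^k \Pr(E_i)\ge(1-\beta)^k=1-\alpha,
\]
using $\beta=1-(1-\alpha)^{1/k}$.

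\textbf{CRN.} Independence fails in this case, so I will instead use the Bonferroni inequality:
\[
{\rm PCD}\ge 1-\sum_{i=1}^k\bigl(1-\Pr(E_i)\bigr)\ge 1-k\beta=1-\alpha,
\]
with $\beta=\alpha/k$.

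The only step needing care is the independence claim in the non-CRN case. Each system's stopping time and decisions depend only on that system's own stream, since the procedure runs system by system with no cross-system interaction. That makes $E_i$ a function of independent inputs, so the product form holds. Everything else is a direct substitution.
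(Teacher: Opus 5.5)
Your proposal is correct and follows the paper's route. You apply Corollary~\ref{cor:berf} to each system to get $\Pr(\cap_{\ell=1}^s {\rm CD}_{i\ell})\ge 1-\beta$, then take the product under independent sampling with $\beta=1-(1-\alpha)^{1/k}$, or use the Bonferroni inequality under CRN with $\beta=\alpha/k$, exactly as in the proof of Theorem~\ref{thm:BeRFStatisticalValidityMultiSys}; your observation that each $E_i$ depends only on system $i$'s own observations and dummy uniforms makes explicit what the paper leaves implicit when it writes ${\rm PCD}$ as a product.
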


\section{Experiments}
\label{sec:experiments}

In this section, we demonstrate the performance of our proposed procedures. We consider three procedures: (1) $\mathcal{BRF}$, our single-pass feasibility check procedure for subjective probability constraints that includes all possible thresholds up front; (2) $\mathcal{RF}$, a single-pass feasibility check procedure for subjective normal constraints due to \citet{zhou2022finding}; and (3) $\mathcal{MPB}$, our multi-pass feasibility check procedure for subjective probability constraints. 
We first discuss the stopping time of $\mathcal{BRF}$ (due to its unbounded continuation region) in Section \ref{subsec:BRFStoppingTime}. 
We then perform a numerical evaluation of $\mathcal{BRF}$ in comparison to $\mathcal{RF}$ in Section \ref{subsec:RFComparison}. Subsequently, we provide numerical results for the multi-pass procedure $\mathcal{MPB}$ in Section \ref{subsec:MBeRFExp}. Finally, we demonstrate the performance of all three procedures through the inventory example from Section \ref{sec:Intro} in Section \ref{subsec:InventoryExp}. 

All experiments are conducted with nominal error probability $\alpha = 0.05$. For procedures $\mathcal{BRF}$ and $\mathcal{MPB}$, we set IZ $\theta_\ell\in \{1.2, 1.5\}$ for each constraint $\ell=1,\ldots,s$. 
We report the estimated probability of correct decision (PCD) and the average total number of observations (OBS). We run each experiment for 10,000 macro-replications and the estimated PCD values are meaningful up to the 0.001th digit while the OBS values are meaningful up to the first three digits (except for Section \ref{subsec:InventoryExp} where we run 1,000 macro-replication and the standard errors are provided).

\subsection{Stopping Time of $\mathcal{BRF}$}
\label{subsec:BRFStoppingTime}


As the continuation region of $\mathcal{BRF}$ for constraint $\ell$ (i.e., $(-H_\ell, H_\ell)$) is unbounded, there is no finite upper bound on its stopping time. In this section, we analyze the stopping time of $\mathcal{BRF}$. For simplicity, we consider one system, one constraint, and one threshold through the first pass; therefore, we drop the subscripts $i, \ell, m$, and superscript $(1)$ in the remainder of this section. 

\paragraph{Theoretical expected stopping time.}
We begin with an analytical expression for the expected stopping time. Recall that the monitoring statistic $\sum_{n=1}^{r} ( Y_n-I_{n})$ follows a simple random walk with transition probabilities discussed in the proof of Lemma \ref{thm:single_system_single_threshold}. 
Referring to Section 3.6.1 in \citet{TaylorKarlin1994}, consider the classical gambler’s ruin problem, starting with initial balance $H$. The expected number of trials $v_H$ until the process reaches either $0$ or $2H$ is:
\begin{equation}
\label{eqn:ExpectedNumSteps}
    v_H = \left\{ \begin{array}{ll}
            \frac{H^2}{2p(1-h)}, & \mbox{ if $p=h$ };\\
            {H \over p-h}\left[2\left({{1-{{\left((1-p)h \over p (1-h) \right)}^H}}\over{{1-{{\left((1-p)h \over p(1-h) \right)}^{2H}}}}}\right)-1\right], & \mbox{ otherwise}.
        \end{array} \right.
\end{equation}

Under the SC where $\frac{(1-p)h}{p(1-h)}=\theta$ when $p<h$ or $\frac{p(1-h)}{(1-p)h}=\theta$ when $p>h$, we have
\begin{align*}
    v_H=\frac{H}{p-h}\left[ 2 \left(\frac{1-\theta^{H}}{1-\theta^{2H}}\right)-1 \right] &= \begin{cases}
        \frac{H}{p-h}\times \frac{1-\theta^{H}}{1+\theta^{H}} &\quad \text{ when }p<h; \\
        \frac{H}{p-h}\times \frac{\theta^{H}-1}{\theta^{H}+1} &\quad \text{ when }p>h.
    \end{cases}
\end{align*}
The expected stopping time is longest when the probability $p$ and threshold $h$ coincide. In that case, there is no directional drift -- the random walk is equally likely to move up or down at each step. With no bias toward either boundary, the process must rely purely on random fluctuations to reach absorption, which increases the expected stopping time. In contrast, when $p$ and $h$ differ, a nonzero drift emerges and the trajectory moves more consistently toward one boundary, reducing the expected duration. We now aim to better understand the expected stopping time under general scenarios (i.e., $p\not=h$) and the ``worst-case'' scenario (i.e., $p=h$) through experiments.

\paragraph{Empirical stopping time under general scenarios.}

We next provide the average empirical stopping times for different values of $p,h$, and $\theta$, where we set $\alpha=\beta=\beta_1=0.05$ as we consider a single system, constraint, and threshold. Specifically, we consider $\theta\in \{1.2, 1.5\}$ and set $p\in \{0.15, 0.5\}$. For each case, $h$ is chosen so that $\frac{p(1-h)}{(1-p)h}\in \{1, \theta, 2\theta, 5\theta, 10\theta \}$ (and hence the system is unacceptable with respect to $h$). Note that for a fixed $\theta$, larger values of $\frac{(1-p)h}{p(1-h)}$ indicate an easier problem. Table \ref{tab:EmpiricalStopping_General} presents the results, where we also report the theoretical value in Equation \eqref{eqn:ExpectedNumSteps} and the threshold $h$. Note that $H=17$ and 8 when $\theta=1.2$ and $1.5$, respectively. 

\begin{table}[h!]
    \centering
    \renewcommand{\arraystretch}{0.8}
    \begin{tabular}{c||c|ccc||ccc}
        \toprule
         & & \multicolumn{3}{c||}{$p=0.15$} & \multicolumn{3}{c}{$p=0.5$} \\   \midrule
        & $\frac{p(1-h)}{(1-p)h}$ & $h$ & Theoretical & Empirical (s.e) & $h$ & Theoretical & Empirical (s.e.)  \\  \hline
        \multirow{ 5}{*}{$\theta=1.2$} & 1 & 0.150 & 1133.333 & 1141.074 (9.259) & 0.500 & 578.000 & 577.215 (4.726) \\
        & $\theta$ & 0.128 & 712.718 & 718.301 (5.109) & 0.455 & 341.739 & 339.158 (2.440) \\
         & $2\theta$ & 0.069 & 208.571 & 209.060 (0.775) & 0.294 & 82.571 & 82.391 (0.298) \\
         & $5\theta$ & 0.029 & 140.000 & 140.322 (0.386) & 0.143 & 47.600 & 47.733 (0.120) \\  
         & $10\theta$ & 0.015 & 125.455 & 125.730 (0.311) & 0.077 & 40.182 & 40.246 (0.084) \\ \hline
         \multirow{ 5}{*}{$\theta=1.5$} & 1 & 0.150 & 250.980 & 251.392 (2.050) & 0.500 & 128.000 & 128.096 (1.037) \\
         & $\theta$ & 0.105 & 165.393 & 166.116 (1.178) & 0.400 & 73.991 & 73.088 (0.519) \\
         & $2\theta$ & 0.056 & 84.680 & 85.301 (0.418) & 0.250 & 31.990 & 32.014 (0.149)  \\
         & $5\theta$ & 0.023 & 62.986 & 62.920 (0.244) & 0.118 & 20.923 & 20.859 (0.071) \\
         & $10\theta$ & 0.012 & 57.815 & 57.906 (0.205) & 0.063 & 18.286 & 18.324 (0.054) \\
         \bottomrule
    \end{tabular}
    \caption{Empirical and theoretical expected stopping times under varying $\theta,p$, and $h$. Standard errors are reported in parentheses.}
    \label{tab:EmpiricalStopping_General}
\end{table}

The empirical results align closely with the theoretical predictions, as expected. Moreover, $p=0.5$ yields shorter stopping times than $p=0.15$,  since the problem is easier when $p=0.5$ due to the odds-ratio property (see Section \ref{subsec:CorrectDecision}). Similarly, larger values of $\theta$ reduce the stopping time, since they correspond to a looser IZ  (resulting in a smaller $H$ and larger $p-h$). 

\paragraph{Empirical stopping time under the worst-case scenario.}

Figure \ref{fig:EmpiricalStoppingTime_WorstCase} shows the empirical distributions of the stopping times for $p=h=0.15$ and $\theta\in \{1.2, 1.5\}$. The empirical distributions for $p=h=0.5$ are included in Appendix \ref{sec:AdditionalResults_BRFStoppingTime}.

\begin{figure}[h!]
\centering
\begin{subfigure}{.5\textwidth}
  \centering
  \includegraphics[width=\linewidth]{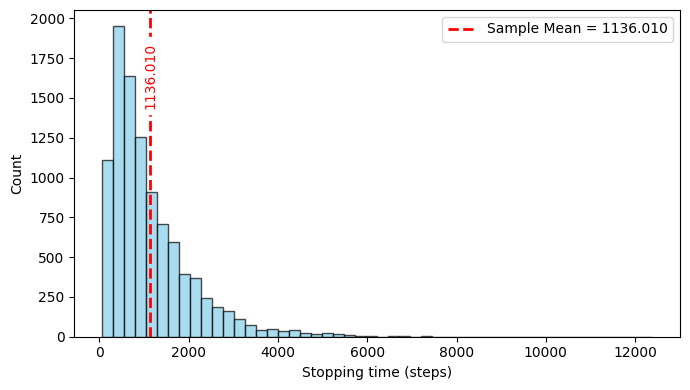}
  \caption{$\theta=1.2$}
  \label{fig:sub1}
\end{subfigure}%
\begin{subfigure}{.5\textwidth}
  \centering
  \includegraphics[width=\linewidth]{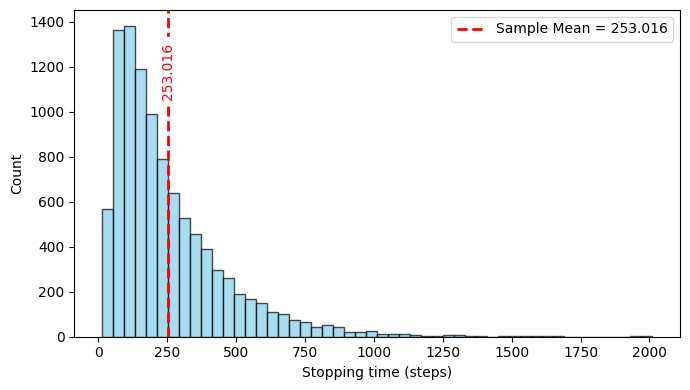}
  \caption{$\theta=1.5$}
  \label{fig:sub2}
\end{subfigure}
\caption{Empirical distributions of stopping times, when $p=h=0.15$.}
\label{fig:EmpiricalStoppingTime_WorstCase}
\end{figure}

Figure \ref{fig:EmpiricalStoppingTime_WorstCase} and Appendix \ref{sec:AdditionalResults_BRFStoppingTime} show that the empirical distributions of the stopping times in the worst-case scenario are left-skewed and that the average stopping time is not excessively large. As with earlier results, $p$ near $0.5$ or larger $\theta$ generally lead to shorter stopping times. 

\subsection{Comparison Between $\mathcal{BRF}$ and $\mathcal{RF}$} 
\label{subsec:RFComparison}

In this section, we compare the performance of $\mathcal{BRF}$ with $\mathcal{RF}$. 
Since both procedures depend in a similar manner on the number of systems and constraints, we do not consider multiple systems or constraints in this section. We discuss the thresholds and tolerance level used by $\mathcal{RF}$ for the comparison with $\mathcal{BRF}$ in Section \ref{sec:Thresholds_CompareRF}.
We then provide a numerical comparison of $\mathcal{BRF}$ and $\mathcal{RF}$ as a function of the batch size employed by $\mathcal{RF}$ in Section \ref{sec:BatchSize_CompareRF}.  

\subsubsection{Thresholds and Tolerance Level for $\mathcal{RF}$}
\label{sec:Thresholds_CompareRF}

The $\mathcal{RF}$ procedure requires a single tolerance level $\epsilon_\ell$ on each constraint $\ell$ whose role resembles that of $\theta_\ell$ but for a constraint on normal data. We need to identify an appropriate value of $\epsilon_\ell$ to ensure a fair comparison between $\mathcal{BRF}$ and $\mathcal{RF}$. Recall from Section \ref{subsec:CorrectDecision} that system $i$ is acceptable with respect to  $h_{\ell,m}^{(1)}$, where $m=1,\ldots,d_\ell^{(1)}$, if it has a probability $p_{i\ell}$ such that 
\begin{equation*}
    \frac{(1-p_{i\ell})h_{\ell,m}^{(1)}}{p_{i\ell} (1-h_{\ell,m}^{(1)})}<\theta_\ell \quad \text{ and } \quad \frac{p_{i\ell}(1-h_{\ell,m}^{(1)})}{(1-p_{i\ell})h_{\ell,m}^{(1)}}<\theta_\ell, 
\end{equation*}
which is equivalent to 
\begin{equation*}
    {\rm LB}_{\ell, m} <p_{i\ell}<{\rm UB}_{\ell, m}, \text{ where } {\rm LB}_{\ell, m} = \frac{h_{\ell,m}^{(1)}}{h_{\ell,m}^{(1)}+\theta_\ell(1-h_{\ell,m}^{(1)})} \text{ and } {\rm UB}_{\ell, m} = \frac{\theta_\ell h_{\ell,m}^{(1)}}{h_{\ell,m}^{(1)}(\theta_\ell-1)+1}.
    \label{eqn:BeRF_AcceptableRegion}
\end{equation*}
Note that ${\rm LB}_{\ell, m}$ is the largest probability of a desirable system, while ${\rm UB}_{\ell, m}$ is the smallest probability of an unacceptable system.

Since $\mathcal{RF}$ requires a single tolerance level $\epsilon_\ell$ on each constraint $\ell$, a natural approach for choosing $\epsilon_\ell$ that is consistent with the odds-ratio IZs (${\rm LB}_{\ell,m}$, ${\rm UB}_{\ell,m}$) is to compute the distances $h_{\ell,m}^{(1)}-{\rm LB}_{\ell,m}$ and ${\rm UB}_{\ell,m}-h_{\ell,m}^{(1)}$ for each threshold $m=1,\ldots,d_\ell$, and then take the smallest one:
\begin{align}
    \epsilon_{\ell,m}&=\min\{ {\rm UB}_{\ell,m}-h_{\ell,m}^{(1)}, h_{\ell,m}^{(1)}-{\rm LB}_{\ell,m}\} 
        = \begin{cases}
            \frac{h_{\ell,m}^{(1)} (1-h_{\ell,m}^{(1)})(\theta_\ell-1)}{h_{\ell,m}^{(1)}+\theta_\ell (1-h_{\ell,m}^{(1)})} & \text{ when } h_{\ell,m}^{(1)}\leq 0.5,   \\
            \frac{h_{\ell,m}^{(1)} (1-h_{\ell,m}^{(1)}) (\theta_\ell-1)}{h_{\ell,m}^{(1)} (\theta_\ell-1)+1} & \text{ when } h_{\ell,m}^{(1)}>0.5;
        \end{cases}  
        \label{eqn:epsilon_lm2}  \\
        \epsilon_\ell &= \min_{m=1,2,\ldots, d_\ell} \epsilon_{\ell,m}.
        \label{eqn:epsilon_RF}
\end{align} 
However, the conversion \eqref{eqn:epsilon_lm2} and \eqref{eqn:epsilon_RF} results in a small tolerance level $\epsilon_\ell$, which makes the performance of $\mathcal{RF}$ conservative. 
Therefore, we also consider an alternative formulation. On a normal constraint $\ell$, system $i$ is acceptable with respect to threshold $\tilde{h}_{\ell,m}^{(1)}$ and tolerance level $\tilde{\epsilon}_{\ell,m}$ if   $\tilde{h}_{\ell,m}^{(1)}-\tilde{\epsilon}_{\ell,m}< p_{i\ell} < \tilde{h}_{\ell,m}^{(1)}+\tilde{\epsilon}_{\ell,m}$ \citep{zhou2022finding}. Thus, by setting $p_{i\ell}-\tilde{\epsilon}_{\ell,m}={\rm LB}_{\ell,m}$ and $p_{i\ell}+\tilde{\epsilon}_{\ell,m}={\rm UB}_{\ell,m}$, we obtain an adjusted threshold $\tilde{h}_{\ell,m}^{(1)}$ and tolerance levels $\tilde{\epsilon}_{\ell,m}$ and $\tilde{\epsilon}_\ell$ as 
\begin{align}
    \tilde{h}_{\ell,m}^{(1)} &= \frac{{\rm LB}_{\ell, m} + {\rm UB}_{\ell, m}}{2} = \frac{h_{\ell,m}^{(1)}}{2} \left(\frac{\theta_\ell^2+1-h_{\ell,m}^{(1)} \left( \theta_\ell-1 \right)^2}{\left[h_{\ell,m}^{(1)}+\theta_\ell (1-h_{\ell,m}^{(1)})\right]\left[h_{\ell,m}^{(1)} (\theta_\ell-1)+1\right]} \right); \label{eqn:h_tilde} \\ 
    \tilde{\epsilon}_{\ell,m} &= \frac{ {\rm UB}_{\ell,m} - {\rm LB}_{\ell, m} }{2} = \frac{ h_{\ell,m}^{(1)} (1-h_{\ell,m}^{(1)}) \left(\theta_\ell^2-1\right) }{2 \left[h_{\ell,m}^{(1)}+\theta_\ell (1-h_{\ell,m}^{(1)})\right]\left[h_{\ell,m}^{(1)} (\theta_\ell-1)+1\right] }; \label{eqn:epsilon_tilde}  \\
    \tilde{\epsilon}_\ell &= \min_{m=1,2,\ldots,d_\ell} \tilde{\epsilon}_{\ell, m}.  \label{eqn:epsilon}
\end{align}
Observe that since $\tilde{h}_{\ell,m}^{(1)}\not=h_{\ell,m}^{(1)}$, this tolerance level setting solves a different feasibility problem and hence will in general yield different feasibility decisions than the original problem. 

We provide numerical results that compare the settings $\epsilon_\ell$ and $\tilde{\epsilon}_\ell$ in Section \ref{sec:BatchSize_CompareRF} and a discussion about the relationship between $h_{\ell, m}^{(1)}, \tilde{h}_{\ell,m}^{(1)},\epsilon_{\ell, m}, \tilde{\epsilon}_{\ell,m}$, and $\epsilon_\ell,\tilde{\epsilon}_\ell$ in Appendix \ref{sec:RF_Tolerance}.
A description of the $\mathcal{RF}$ procedure for subjective probability constraints is given in Appendix~\ref{sec:RF}.

\subsubsection{Numerical Comparison}
\label{sec:BatchSize_CompareRF}

As $\mathcal{RF}$ requires the observations to be normally distributed, it needs to use batch means as base observations for the feasibility check when the data is Bernoulli distributed. 
Therefore, we investigate how the batch size $b$ of $\mathcal{RF}$ affects both PCD and OBS for Bernoulli distributed data.

We consider a single system, single constraint scenario with eight true probabilities $p_{11}\in\{0.01, 0.05, 0.1, 0.15, 0.2, 0.3, 0.4, 0.5\}$, $d_1=1$ threshold, and odds-ratio $\theta_1\in \{1.2,1.5\}$. 
For each probability $p_{11}$ and odds-ratio $\theta_1$, we use the function $f^L_1(p_{11}, \theta_1)$ shown in Equation \eqref{eqn:thresholds} to determine the threshold for $\mathcal{BRF}$. In particular, we define the ``SC threshold'' as $h_{1,1}^{(1)}=f_{1}^{L}(p_{11}, \theta_1)$, while the ``non-SC threshold'' is defined as $h_{1,1}^{(1)}=f_{1}^{L}(p_{11}, \theta_1)/2$ (as explained in Remark \ref{remark:SlippageConfig}, the functions $f^L_\ell(p_{i\ell}, \theta_\ell)$ and $f^U_\ell(p_{i\ell}, \theta_\ell)$ provide the two ``most difficult'' thresholds for an odds-ratio $\theta_\ell>1$ on constraint $\ell$ with probability $p_{i\ell}$).
Consequently, the system is infeasible to the chosen threshold $h_{1,1}^{(1)}$. In addition, following the conversion from Equation \eqref{eqn:h_tilde}, the system is also infeasible to threshold $\tilde{h}_{1,1}^{(1)}$.
For $\mathcal{RF}$, we set the initial observation size $n_0=20$ and consider batch sizes $b\in\{1,2,4,8,16,32,64,100,200,300,400\}$ and the two tolerance levels settings $\epsilon_1$ and $\tilde{\epsilon}_1$ discussed in Section \ref{sec:Thresholds_CompareRF}. 
For each combination of $p_{11}, \theta_1$, and $b$, we estimate the PCD and the OBS for both the SC and non-SC threshold. 
In Table~\ref{tab:brf_rf_sc_nonsc}, the PCD Rate for each batch size reports how many of the eight probability values achieve a PCD above the nominal level; for example, ``2/8'' means that only two out of the eight probability settings satisfy the PCD being at least $1-\alpha$. Similarly, Table \ref{tab:brf_rf_sc_nonsc} shows the average OBS over the eight probability values. While Table \ref{tab:brf_rf_sc_nonsc} presents the results for $\theta_1=1.2$, we include the results for $\theta_1=1.5$ in Appendix \ref{sec:RFComparison_Additional}.
\begin{table}[h!]
\centering
\scriptsize
{\normalsize
\caption{PCD Rate and Average OBS for $\mathcal{BRF}$ and $\mathcal{RF}$ for $\theta_1=1.2$.}
\label{tab:brf_rf_sc_nonsc}
\renewcommand{\arraystretch}{0.8}
\begin{tabular}{lc||cc|cc|cc|cc}
\toprule
& &
\multicolumn{4}{c|}{SC Thresholds} &
\multicolumn{4}{c}{non-SC Thresholds} \\ \cline{3-10}
& &
\multicolumn{2}{c}{$\epsilon_1$} &
\multicolumn{2}{c|}{$\tilde{\epsilon}_1$} & \multicolumn{2}{c}{$\epsilon_1$} &
\multicolumn{2}{c}{$\tilde{\epsilon}_1$} \\ \cline{3-10}
& & PCD & Avg & PCD & Avg & PCD & Avg & PCD & Avg \\
& $b$ & Rate & OBS & Rate & OBS & Rate & OBS & Rate & OBS \\\midrule

$\mathcal{BRF}$ &  -- & 8/8 & 1850.3 & 8/8 & 1850.3 & 8/8 & 548.1 & 8/8 & 548.1 \\ \hline
\multirow{11}{*}{$\mathcal{RF}$}
& 1 & 3/8 & 1511.5 & 2/8 & 1425.3 & 5/8 & 1107.1 & 5/8 & 1021.5 \\
& 2 & 3/8 & 1474.7 & 2/8 & 1428.7 & 6/8 & 1102.1 & 6/8 & 1010.8 \\
& 4 & 4/8 & 1508.9 & 2/8 & 1404.5 & 7/8 & 1109.9 & 7/8 & 999.1 \\
& 8 & 6/8 & 1520.7 & 3/8 & 1408.2 & 7/8 & 1121.4 & 7/8 & 1025.2 \\
& 16 & 7/8 & 1553.4 & 4/8 & 1445.0 & 8/8 & 1199.1 & 8/8 & 1118.5 \\
& 32 & 7/8 & 1727.4 & 6/8 & 1619.6 & 8/8 & 1400.0 & 8/8 & 1325.7 \\
& 64 & 7/8 & 2183.6 & 6/8 & 2098.9 & 8/8 & 1904.7 & 8/8 & 1837.8 \\
& 100 & 8/8 & 2785.8 & 7/8 & 2690.7 & 8/8 & 2523.6 & 8/8 & 2458.4 \\
& 200 & 8/8 & 4539.4 & 7/8 & 4469.0 & 8/8 & 4289.9 & 8/8 & 4230.6 \\
& 300 & 8/8 & 6357.3 & 7/8 & 6299.8 & 8/8 & 6116.9 & 8/8 & 6078.7 \\
& 400 & 8/8 & 8217.4 & 8/8 & 8176.4 & 8/8 & 8035.5 & 8/8 & 8019.5 \\ 
\bottomrule
\end{tabular}}
\end{table}

As shown in Table~\ref{tab:brf_rf_sc_nonsc} and Appendix \ref{sec:RFComparison_Additional}, $\mathcal{BRF}$ attains the desired PCD level for all probability values under both SC and non-SC thresholds, while $\mathcal{RF}$ is sensitive to the choice of tolerance level and batch size $b$.
For $\theta_1=1.2$, $\mathcal{RF}$ requires a batch size as large as $b=400$ to achieve $8/8$ PCD Rate. 
Thus, applying $\mathcal{RF}$ to Bernoulli data may incur a non-negligible risk of incorrect decisions, whereas $\mathcal{BRF}$ consistently maintains the target confidence level.
When we require $\mathcal{RF}$ to achieve a PCD Rate comparable to that of $\mathcal{BRF}$, the resulting OBS is roughly 1.5 -- 9.3 times larger than that of $\mathcal{BRF}$.
The main difficulty for $\mathcal{RF}$ occurs when the true probability is small (e.g., $p_{11}=0.01$): the Bernoulli observations are then highly skewed, and a large batch size is needed for the normal approximation underlying $\mathcal{RF}$ to be accurate. 

In addition, Table \ref{tab:brf_rf_sc_nonsc} and Appendix \ref{sec:RFComparison_Additional} show that $\mathcal{RF}$ and $\mathcal{BRF}$ require more observations when $\theta_1=1.2$ than when $\theta_1=1.5$. This is expected as a larger $\theta_1$ indicates that the decision maker is more indifferent to feasibility decisions near the threshold, allowing decisions to be reached earlier. We also see that $\tilde{\epsilon}_1$ yields smaller Avg OBS than $\epsilon_1$. This is because $\tilde{\epsilon}_1$ is less conservative, as discussed in Section \ref{sec:Thresholds_CompareRF} and Appendix \ref{sec:RF_Tolerance}. Moreover, SC thresholds require more observations than non-SC thresholds for a fixed batch size $b$, as SC thresholds correspond to more difficult settings. 
Finally, when $b$ is large, the Avg OBS becomes similar across $\epsilon_1$ and $\tilde{\epsilon}_1$, SC and non-SC thresholds, and $\theta_1=1.2$ and $\theta_1=1.5$. 
This is due to the fact that $\mathcal{RF}$ requires $n_0$ initial sample batches for variance estimation; when the batch size is large, the feasibility decision is typically reached immediately after collecting that initial set of samples.

\subsection{Results for $\mathcal{MPB}$} 
\label{subsec:MBeRFExp}

In this section, we perform a numerical evaluation of the multi-pass procedure $\mathcal{MPB}$ compared with $\mathcal{BRF}$ that includes all thresholds in a single-pass.
Given that we propose three heuristic procedures, $\mathcal{BRF}_B^{(w)}$, $\mathcal{BRF}_N^{(w)}$,and $\mathcal{BRF}_{BN}^{(w)}$ for $w\geq 2$, we use $\mathcal{MPB}_B$, $\mathcal{MPB}_N$, and $\mathcal{MPB}_{BN}$ to denote the multi-pass procedures that incorporate $\mathcal{BRF}_B^{(w)}$, $\mathcal{BRF}_N^{(w)}$, and $\mathcal{BRF}_{BN}^{(w)}$, respectively. 

When thresholds are added sequentially, as in $\mathcal{MPB}_B$, $\mathcal{MPB}_N$, and $\mathcal{MPB}_{BN}$ when $w\geq 2$, we define the probability of correct decision $\widehat{{\rm PCD}}$ as 
\begin{align*}
    \widehat{{\rm PCD}} &= 
    \begin{cases}
        \ {\rm PCD}, & \text{ if } w= 1, \\
        \Pr \left( \cap_{i=1}^k \cap_{\ell=1}^s \cap_{h\in \cup_{u=1}^w T_\ell^{(u)} } {\rm CD}_{i\ell} (h) \right), & \text{ if } w>1.
    \end{cases}
\end{align*}
In Section \ref{subsubsec:HeuristicValidity}, we show that the three heuristic procedures $\mathcal{MPB}_B$, $\mathcal{MPB}_N$, and $\mathcal{MPB}_{BN}$ do not violate statistical validity in the numerical examples we consider. In Section \ref{subsubsec:HeuristicEfficiency}, we demonstrate that $\mathcal{MPB}_B$, $\mathcal{MPB}_N$, and $\mathcal{MPB}_{BN}$ can be substantially more efficient than $\mathcal{BRF}$.

\subsubsection{Validity of $\mathcal{MPB}_B$, $\mathcal{MPB}_N$, and $\mathcal{MPB}_{BN}$}
\label{subsubsec:HeuristicValidity}

In this section, we evaluate the validity of the three heuristic procedures $\mathcal{MPB}_B$, $\mathcal{MPB}_N$, and $\mathcal{MPB}_{BN}$ for a single system with two constraints. 
We set $p_{1\ell}=0.15$ and $\theta_\ell=1.5$ for $\ell=1,2$, and use $d_\ell=4$ thresholds for both $\ell=1,2$. In this example, we test feasibility through two passes (i.e., $w=2$) and consider the following two threshold configurations: 
\begin{itemize}
    \item Configuration 1: $T_{\ell}^{(1)} = \{f_\ell^L(p_{1\ell},1.5\theta_\ell), f_\ell^U(p_{1\ell},1.5\theta_\ell)\}$ and $T_{\ell}^{(2)} = \{f_\ell^L(p_{1\ell},\theta_\ell), f_\ell^U(p_{1\ell},\theta_\ell)\}$ for $\ell=1,2$.
    \item Configuration 2: $T_{\ell}^{(1)} = \left\{f_\ell^L(p_{1\ell},\theta_\ell), f_\ell^U(p_{1\ell},\theta_\ell)\right\}$ and $T_{\ell}^{(2)} = \left\{f_\ell^L(p_{1\ell},1.5\theta_\ell), f_\ell^U(p_{1\ell},1.5\theta_\ell)\right\}$ for $\ell=1,2$.
\end{itemize}

From Remark \ref{remark:SlippageConfig}, recall that $f_\ell^L (p_{1\ell}, \theta_\ell)$ and $f_\ell^U (p_{1\ell}, \theta_\ell)$ are the two ``most difficult'' thresholds for $p_{1\ell}$ when $\theta_\ell>0$. Further, $f_\ell^L (p_{1\ell}, \theta_\ell)$ decreases and $f_\ell^U (p_{1\ell}, \theta_\ell)$ increases as $\theta_\ell$ increases, and $f_\ell^L (p_{1\ell}, \theta_\ell)=f_\ell^U (p_{1\ell}, \theta_\ell)=p_{1\ell}$ when $\theta_\ell=1$. 
It follows that the functions $f_\ell^L (p_{1\ell}, 1.5\theta_\ell)$ and $f_\ell^U (p_{1\ell}, 1.5\theta_\ell)$ output thresholds that are easier for feasibility decisions, as they represent the most difficult thresholds associated with a larger odds ratio.
Thus, Configuration 1 tests easier thresholds first and then tests more difficult thresholds, whereas Configuration 2 is the opposite. We also consider $\mathcal{BRF}$ with the same four thresholds on the two constraints. 
We present the results in Table \ref{tab:berfwsingle1}, where we use ${\rm OBS}^{(u)}$ to denote the average required number of observations from the $u$th pass and $u=1,2$ for $\mathcal{MPB}_B$, $\mathcal{MPB}_N$, and $\mathcal{MPB}_{BN}$. 

\begin{table}[h!] 
\centering
\renewcommand{\arraystretch}{0.9}
\caption{Experimental results for $\mathcal{MPB}_B, \mathcal{MPB}_N, \mathcal{MPB}_{BN}$, and $\mathcal{BRF}$.}
\label{tab:berfwsingle1}
{\normalsize
\begin{tabular}{c | c c c c | c c c c}
\toprule
 & \multicolumn{4}{c}{Configuration 1} & \multicolumn{4}{c}{Configuration 2} \\ \cline{2-9}
 & \makecell{$\mathcal{MPB}_B$} & \makecell{$\mathcal{MPB}_N$} & \makecell{$\mathcal{MPB}_{BN}$} & $\mathcal{BRF}$ & \makecell{$\mathcal{MPB}_B$} & \makecell{$\mathcal{MPB}_N$} & \makecell{$\mathcal{MPB}_{BN}$} & $\mathcal{BRF}$ \\ \midrule
OBS$^{(1)}$ & 181.366 & 181.366 & 181.366 & -- & 365.684 & 365.684 & 365.684 & -- \\ 
OBS$^{(2)}$ & 177.726 & 173.754 & 114.830 &  -- & 0.972 & 0.771 & 0.362 & -- \\ \hline
OBS & 359.092 & 355.120 & 296.196 & 366.567 & 366.656 & 366.455 & 366.047 & 366.567 \\ \hline
$\widehat{{\rm PCD}}$ & 0.984 & 1.000 & 0.986 & 0.979 & 0.979 & 0.979 & 0.979 & 0.979 \\ 
\bottomrule
\end{tabular}
}
\end{table}
As shown in Table \ref{tab:berfwsingle1}, $\mathcal{MPB}_B$, $\mathcal{MPB}_N$, and $\mathcal{MPB}_{BN}$ all satisfy the $1 - \alpha = 0.95$ confidence level across both configurations. This indicates that while these procedures do not guarantee statistical validity, they can still meet the confidence level requirements of the decision maker. 

Focusing on the OBS, all $\mathcal{MPB}_B$, $\mathcal{MPB}_N$, and $\mathcal{MPB}_{BN}$ exhibit OBS levels comparable to $\mathcal{BRF}$ under Configuration 2, with $\mathcal{MPB}_{BN}$ slightly outperforming the others. Under Configuration 1, $\mathcal{MPB}_B$ and $\mathcal{MPB}_N$ show slightly less OBS than $\mathcal{BRF}$, whereas $\mathcal{MPB}_{BN}$ achieves approximately 20\% savings. The superior performance of $\mathcal{MPB}_{BN}$ relative to $\mathcal{MPB}_{B}$ and $\mathcal{MPB}_N$ is due to the efficiency of ${\cal BRF}_{BN}^{(2)}$ over ${\cal BRF}_{B}^{(2)}$ and ${\cal BRF}_{N}^{(2)}$, as discussed in Section \ref{subsec:HeuristicLaterPasses}.

As $\mathcal{MPB}_{BN}$ is guaranteed to perform superior compared with $\mathcal{MPB}_B$ and $\mathcal{MPB}_N$ in terms of the OBS, we focus on $\mathcal{MPB}_{BN}$ in the rest of the paper. 

\subsubsection{Efficiency in an Optimization Setting} 
\label{subsubsec:HeuristicEfficiency}

In this section, we demonstrate that $\mathcal{MPB}_{BN}$ can achieve substantial efficiency gains over $\mathcal{BRF}$ when the goal is to use subjective constraints to prune inferior systems. 
We consider a setting with 100 systems and two probability constraints. The true probabilities are as follows: for system 1, $p_{11} = p_{12} = 0.01$; for systems $i=2, 3, \ldots, 100$, $p_{i1} = p_{i2} = 0.5$. The objective is to identify the system with the lowest probabilities under both constraints (i.e., System 1).

To implement $\mathcal{MPB}_{BN}$, we set $w=2$ and assume that the decision maker begins with an initial threshold set $T_\ell^{(1)}=\{0.25\}$ for $\ell=1,2$. If only one system is deemed feasible, we return the feasible system and terminate the algorithm. Otherwise, if multiple feasible systems are identified, the decision maker refines the search by considering the threshold set $T_\ell^{(2)}=\{0.02,0.03, \ldots ,0.24\}$. Finally, if no systems are found feasible, the threshold set is expanded to $T_\ell^{(2)}=\{0.26, 0.27, \ldots, 0.49\}$. In contrast, under $\mathcal{BRF}$, the decision maker evaluates all thresholds in $\{0.02, 0.03, \ldots, 0.49\}$ simultaneously. 
The resulting comparison is reported in Table \ref{tab:largesavings}.

\begin{table}[h!] 
\centering
\setlength{\tabcolsep}{10pt}
\renewcommand{\arraystretch}{0.9}
\caption{$\widehat{{\rm PCD}}$ and OBS of $\mathcal{MPB}_{BN}$ and $\mathcal{BRF}$.}
\label{tab:largesavings}
{\normalsize
\begin{tabular}{ccc}
\toprule
    & $\mathcal{MPB}_{BN}$     & $\mathcal{BRF}$      \\ \midrule
    ${\rm OBS}^{(1)}$ & 10,256 & - \\  
    ${\rm OBS}^{(2)}$ & 0 & - \\  \hline
    OBS & 10,256 & 162,138 \\  \hline
    $\widehat{{\rm PCD}}$ & 0.999         & 0.984     \\ \bottomrule
\end{tabular}
}
\end{table}

Table \ref{tab:largesavings}  shows that $\mathcal{MPB}_{BN}$ achieves a $\widehat{{\rm PCD}}$ of 0.999, while $\mathcal{BRF}$'s PCD is 0.984. This difference arises because $\mathcal{MPB}_{BN}$ focuses on a single threshold of 0.25, allowing for an accurate feasibility check given that the $p_{i\ell}$ are far from 0.25 for all $i=1,\ldots,100$ and $\ell=1,2$. In contrast, $\mathcal{BRF}$ includes thresholds as close as 0.02 and 0.49 to the $p_{i\ell}$'s, leading to occasional failures in accurately checking feasibility. In addition, $\mathcal{MPB}_{BN}$ requires only about 6\% of OBS compared to $\mathcal{BRF}$. In conclusion, $\mathcal{MPB}_{BN}$ demonstrates more efficient performance compared to $\mathcal{BRF}$. This extreme case highlights $\mathcal{MPB}_{BN}$'s potential efficiency in practice.

\subsection{Inventory Problem}
\label{subsec:InventoryExp}

In this section, we demonstrate the performance of $\mathcal{BRF}$ and $\mathcal{MPB}_{BN}$ through the $(s, S)$ inventory problem discussed in Section \ref{sec:Intro}. We choose the setting as in \cite{law2007simulation}. 
The set of systems comprises all 77 possible combinations of $(s, S)$, where $s \in \{x: x = 20 + 2m, m = 0, 1, \ldots, 10\}$ and $S \in \{y: y = 40 + 10n, n = 0, 1, \ldots, 6\}$. The review period is one month and each replication runs for 12 review periods. The monthly demand follows a Poisson distribution with a mean of 25. The order cost is 3 per item, the fixed cost for ordering is 32 per order, the holding cost is 1 per item per review period, and the penalty cost is 5 per lost demand.

We consider two performance measures: (1) the probability of the total yearly cost exceeding 1400 (matching Section \ref{sec:Intro} with all costs measured in units of \$1,000), and (2) the probability of a stockout occurring within a year. We use $p_{i1}$ and $p_{i2}$ to denote the true value of (1) and (2), respectively, for $i=1,2,\ldots, 77$, and start by obtaining precise estimates of $p_{i1}$ and $p_{i2}$ based on simulation with 1,000,000 replications. 
Figure \ref{fig:InventoryConstr} in Appendix \ref{subsec:Experiments_Inventory} shows  the values of the estimated performance measures for the 77 systems considered as functions of $(s, S)$. Figure \ref{fig:TwoPerformance} shows a scatter plot of the estimated true probabilities $(p_{i1}, p_{i2})$ of the 77 systems considered. 
In this example, $p_{i1}$ ranges from 0.0035 to 0.9872, while $p_{i2}$ ranges from $4.9\times 10^{-5}$ to 0.9256.

\begin{figure}[h!]
    \centering
    \includegraphics[width=0.5\linewidth]{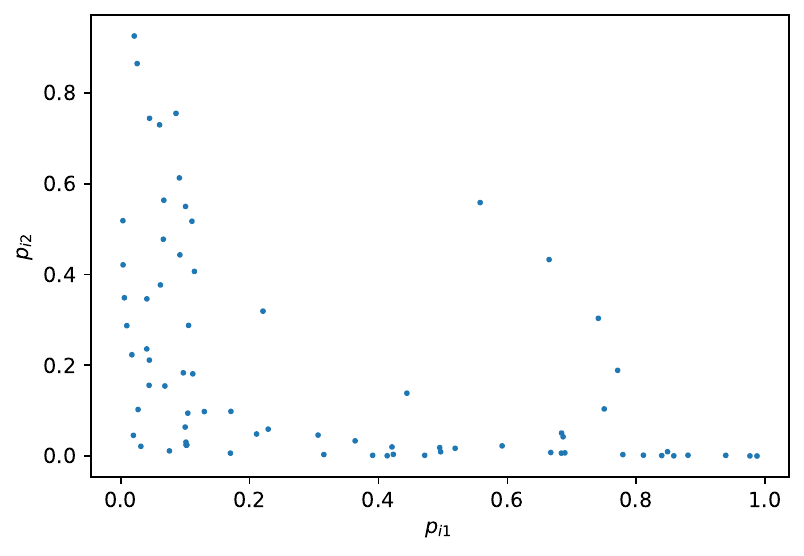}
    \caption{Estimated true probabilities $(p_{i1}, p_{i2})$ for the 77 systems.}
    \label{fig:TwoPerformance}
\end{figure}

The decision maker wishes to identify systems that satisfy both 
$p_{i1} \leq h_1$ and $p_{i2} \leq  h_2$, where $h_1$ and $h_2$ are chosen subjectively. We consider three procedures, $\mathcal{RF}, \mathcal{BRF}$, and $\mathcal{MPB}_{BN}$, and provide the standard error (s.e.) to quantify the variability of the estimates for OBS.  

In this setup, $\mathcal{BRF}$ and ${\cal RF}$ test all thresholds, $h_{\ell} \in \{0.01, 0.03, 0.05, \ldots, 0.49, 0.51\}$ for both $\ell=1,2$, whereas $\mathcal{MPB}_{BN}$ tests the thresholds $h_{\ell} \in T_\ell^{(1)}= \{0.11, 0.21, 0.31, 0.41, 0.51\}$ in the first pass to narrow down an appropriate threshold range, and in the second pass, it tests the thresholds $h_{\ell} \in T_\ell^{(2)}= \{0.01, 0.03, 0.05, 0.07, 0.09\}$ for both $\ell=1,2$ (because in all cases multiple systems are identified as feasible to threshold 0.11 for both constraints in the first pass).

We set $\theta_{1} = \theta_{2} = 1.5$, for $\mathcal{BRF}$ and $\mathcal{MPB}_{BN}$, and set $n_0=20$ and $b=32$ for $\mathcal{RF}$. The tolerance levels of $\mathcal{RF}$ are set as $\tilde{\epsilon}_\ell$, see Section \ref{sec:Thresholds_CompareRF}. Thus, the parameters of $\mathcal{RF}$ are not chosen conservatively. 
We do not incorporate CRN. The results are presented in Table~\ref{tab:multi2const}.  

\begin{table}[h!] 
\centering
\renewcommand{\arraystretch}{0.8}
\caption{$\widehat{{\rm PCD}}$ and OBS for the inventory problem leveraging $\mathcal{MPB}$ for pruning inferior systems.}
\label{tab:multi2const}
{\normalsize
\begin{tabular}{c|ccc}
\toprule
    & $\mathcal{MPB}_{BN}$     & $\mathcal{BRF}$ & $\mathcal{RF}$      \\ \midrule
$\widehat{{\rm PCD}}$ & 0.994        & 0.991    & 0.999 \\ \hline
${\rm OBS}^{(1)}$ & 79,526 & -- & -- \\ \hline
${\rm OBS}^{(2)}$ & 36,876 & -- & -- \\ \hline
OBS (s.e.) & 116,402 (401.33) & 352,507 (1183.53) & 4,707,590 (9713.13)\\ \bottomrule
\end{tabular}
}
\end{table}

From the perspective of $\widehat{{\rm PCD}}$, all three procedures successfully guarantee the desired statistical validity. However, in terms of OBS, $\mathcal{MPB}_{BN}$ requires only about 33\% of the observations compared to $\mathcal{BRF}$, demonstrating that $\mathcal{MPB}_{BN}$ is a more efficient procedure with this setting. 
In this setting, $\mathcal{RF}$ requires approximately 13 times more observations than $\mathcal{BRF}$, demonstrating that both $\mathcal{MPB}_{BN}$ and $\mathcal{BRF}$ are more efficient than $\mathcal{RF}$. This confirms the superior efficiency of $\mathcal{MPB}_{BN}$ and $\mathcal{BRF}$ over $\mathcal{RF}$ in terms of Bernoulli data.

Specifically, $\mathcal{MPB}_{BN}$ operates on relatively spread out thresholds (i.e., $\{0.11, 0.21, 0.31, 0.41,0.51\}$) during the first pass, which allowed it to check the feasibility for five thresholds with a small number of OBS. In the second pass, $\mathcal{MPB}_{BN}$ saves OBS compared to $\mathcal{BRF}$, as it avoids the unnecessary examination of all thresholds for all systems. This highlights $\mathcal{MPB}_{BN}$’s practical advantage in efficiently reducing the number of observations needed. 

Additional results that demonstrate the efficiency of $\mathcal{BRF}$ and $\mathcal{MPB}_{BN}$ when they are compared against $\mathcal{RF}$ in this realistic inventory example are provided in Appendix \ref{sec:ComparingBeRFandRF}. This includes results that test different values of $\theta_1,\theta_2, b$ for all thresholds (as opposed to only the thresholds needed to identify the best systems) and results on the number of systems deemed feasible under different combinations of thresholds (compared with the actual number of feasible systems based on the true system performance). We provide results for the use of CRN in Appendix \ref{subsec:Experiments_CRN}. Finally, we include results for when the decision maker is more sensitive to the stockout probability than the yearly cost in Appendix \ref{sec:SensitiveStockout_Results}, which demonstrate similar performance as shown in Table \ref{tab:multi2const}.

\section{Conclusion}
\label{sec:conclusion}

In this paper, we address the problem of identifying feasible systems among a finite number of simulated alternatives, when the constraints are on probabilities. Thus, the simulation observations are Bernoulli distributed and we further consider subjective probability constraints where thresholds vary. We develop novel procedures that employ a random walk model with odds-ratio IZ parameters and recycle observations for different thresholds for efficiency. Our $\mathcal{BRF}$ procedure is statistically valid and significantly outperforms the earlier $\mathcal{RF}$ procedure that requires an initial sample size for variance estimation and batching to achieve the approximate normality of basic observations. Our $\mathcal{MPB}$ procedure extends beyond $\mathcal{BRF}$ by checking only the necessary thresholds in a sequential manner. Through experimental results, $\mathcal{MPB}$ demonstrates its potential as a more efficient procedure compared even with $\mathcal{BRF}$ when the decision maker sequentially adds thresholds in multiple passes and uses them to prune inferior systems.

\section*{Acknowledgments}
The second author was supported by NSF under grants CMMI--2127778 and CMMI--2348409. The third author was supported by CMMI--2348409.

\singlespacing
{\footnotesize 
	\bibliographystyle{apalike}
	\bibliography{myref}
}

\clearpage
\appendix

\section*{Appendices}

Appendices \ref{sec:AdditionalResults_BRFStoppingTime}, \ref{sec:RFComparison_Additional}, and \ref{sec:Experiments_Additional} show additional numerical results for Sections \ref{subsec:BRFStoppingTime}, \ref{subsec:RFComparison}, and \ref{subsec:InventoryExp}, respectively. 
Appendix \ref{sec:RF_Tolerance} includes discussion about the tolerance levels and thresholds used in Procedure ${\cal RF}$ for the numerical experiments. Appendix \ref{sec:RF} provides detailed discussion on how a decision maker uses ${\cal RF}$ to perform feasibility checks for subjective probability constraints. 

\section{Additional Numerical Results for Section \ref{subsec:BRFStoppingTime}}
\label{sec:AdditionalResults_BRFStoppingTime}

Figure \ref{fig:EmpiricalStoppingTime_WorstCase_Additional} shows the empirical distributions of the stopping times for $p=0.5$ and $\theta\in \{1.2, 1.5\}$. 

\begin{figure}[h!]
\centering
\begin{subfigure}{.5\textwidth}
  \centering
  \includegraphics[width=\linewidth]{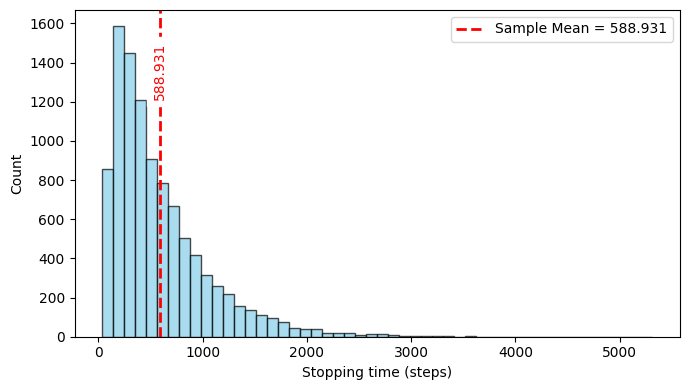}
  \caption{$\theta=1.2$}
  \label{fig:sub1}
\end{subfigure}%
\begin{subfigure}{.5\textwidth}
  \centering
  \includegraphics[width=\linewidth]{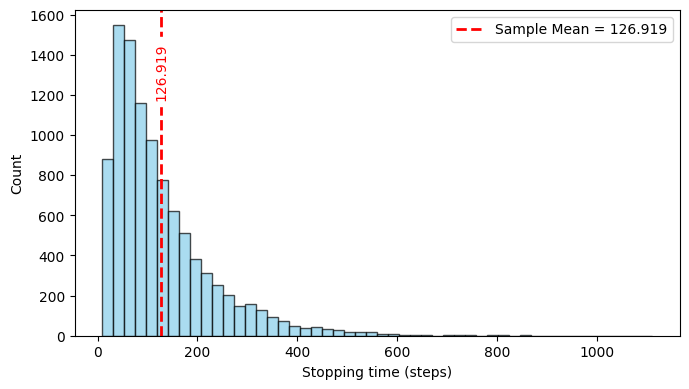}
  \caption{$\theta=1.5$}
  \label{fig:sub2}
\end{subfigure}
\caption{Empirical distributions of stopping times, when $p=h=0.5$.}
\label{fig:EmpiricalStoppingTime_WorstCase_Additional}
\end{figure}

\section{Discussion about the Tolerance Levels and Thresholds Used in $\mathcal{RF}$}
\label{sec:RF_Tolerance}

As our discussion focuses on a particular constraint $\ell$, we drop the subscript $\ell$ in $\epsilon_{\ell,m}, \theta_\ell, d_\ell, h_{\ell,m}^{(1)}$, and $\tilde{h}_{\ell,m}^{(1)}$ for simplicity. We also omit the superscript $(1)$ in $h_{\ell,m}^{(1)}$ and $\tilde{h}_{\ell,m}^{(1)}$.

\paragraph{Relation between $h_m$ and $\tilde{h}_m$.} 
According to \eqref{eqn:h_tilde}, with a given odds-ratio $\theta>1$, $\tilde{h}_m$ (as well as ${\rm LB}_m$ and ${\rm UB}_m$) depends on $h_m$ as follows:
\begin{align}
    \tilde{h}_m &= \frac{h_m}{2} \times \frac{\theta^2+1-h_m(\theta-1)^2}{[h_m+\theta (1-h_m)][h_m(\theta-1)+1]}  \label{eqn:AdjustedH}
\end{align}

Figure \ref{fig:RelationBetweenThresholds}, illustrates the relationship between $\tilde{h}_m$ and $h_m$, with $h_m$ ranging from 0 to 1. We consider two settings: a practically relevant case with $\theta=1.5$, and a more extreme case with $\theta=5$ included for illustrative purposes.
\begin{figure}[h!]
\centering
\begin{subfigure}{.5\textwidth}
  \centering
  \includegraphics[width=0.9\linewidth]{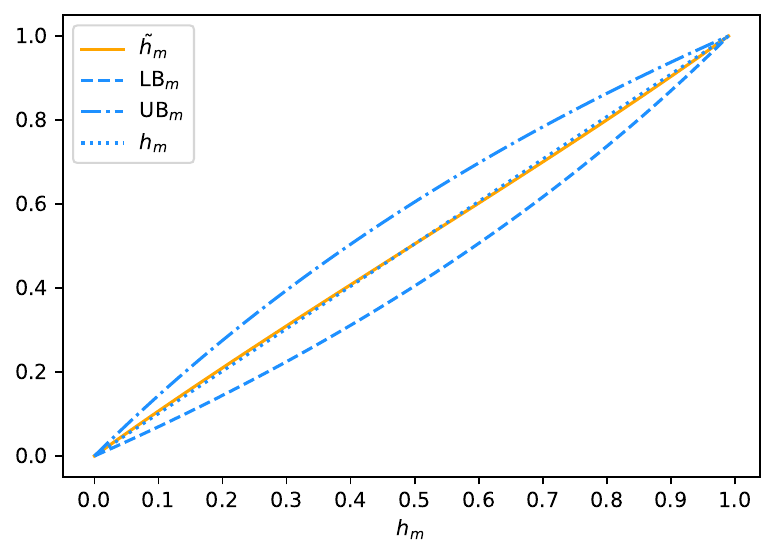}
  \caption{$\theta=1.5$}
  \label{fig:AdjustedThreshold_RF_FixedTheta1.5}
\end{subfigure}%
\begin{subfigure}{.5\textwidth}
  \centering
  \includegraphics[width=0.9\linewidth]{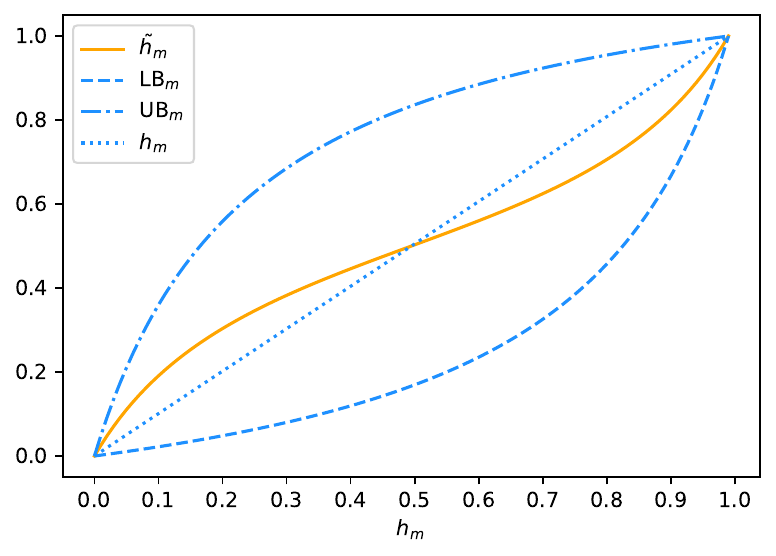}
  \caption{$\theta=5$}
  \label{fig:AdjustedThreshold_RF_FixedTheta5}
\end{subfigure}
\caption{Demonstration of $\tilde{h}_m, {\rm LB}_m$, and ${\rm UB}_m$ depending on $h_m$, where we consider $\theta\in \{1.5, 5\}$. }
\label{fig:RelationBetweenThresholds}
\end{figure}

We have the following key observations about the relationship between $h_m$ and $\tilde{h}_m$ that can be seen from Figure \ref{fig:RelationBetweenThresholds} and verified from Equation \eqref{eqn:AdjustedH}:
\begin{itemize}
    \item $\tilde{h}_m> h_m$ when $0< h_m< 0.5$ and $\tilde{h}_m< h_m$ when $0.5< h_m< 1$. Further, $\tilde{h}_m=h_m$ when $h_m=0, 0.5$, or 1. 
    \item For a fixed $h_m$, the difference between $\tilde{h}_m$ and $h_m$ is larger when $\theta$ is larger. 
\end{itemize}

\paragraph{Relation between $\epsilon$ and $\tilde{\epsilon}$.}
According to Equations \eqref{eqn:epsilon_lm2} and \eqref{eqn:epsilon_tilde}, the tolerance levels $\epsilon_m$ and $\tilde{\epsilon}_m$ associated with threshold $h_m$ and odds-ratio $\theta$ can be expressed by the functions
\begin{align*}
        E(h_m, \theta) &= \begin{cases}
            \frac{h_m (1-h_m)(\theta-1)}{h_m+\theta (1-h_m)} & \text{ when } h_m\leq 0.5   \\
            \frac{h_m (1-h_m) (\theta-1)}{h_m (\theta-1)+1} & \text{ when } h_m>0.5
        \end{cases}
    \quad \text{ and } \quad
    \tilde{E}(h_m, \theta) = \frac{h_m (1-h_m) (\theta^2-1) }{2 \left[ h_m+\theta (1-h_m) \right] \left[ h_m (\theta-1)+1 \right]  },
\end{align*} 
respectively, and the overall tolerance level for the constraint is taken as the minimum across all thresholds: $\epsilon = \min_{m=1,\ldots,d} \epsilon_m$ and $\tilde{\epsilon}=\min_{m=1,\ldots,d} \tilde{\epsilon}_m$. Figure \ref{fig:ToleranceLevelFunctions} shows a demonstration of the functions $E(h_m,\theta)$ and $\tilde{E}(h_m, \theta)$ where we fix $\theta=1.5$. 
\begin{figure}[h!]
\centering
\includegraphics[width=0.5\linewidth]{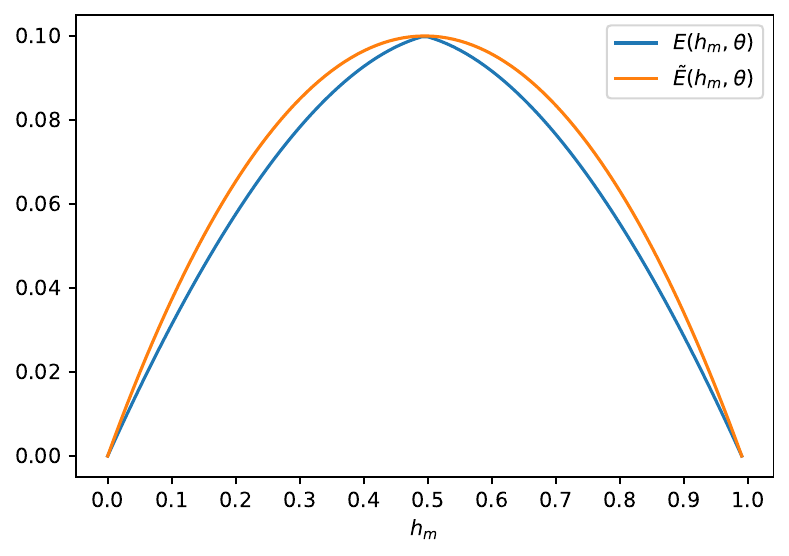}
\caption{Demonstration of the functions $E(h_m, \theta)$ and $\tilde{E}(h_m, \theta)$ when $\theta=1.5$.}
\label{fig:ToleranceLevelFunctions}
\end{figure}

There are four key observations about the relationship between $\epsilon_m,\tilde{\epsilon}_m$, and $h_m$:
\begin{itemize}
    \item For a fixed $\theta$, both $E(h_m, \theta)$ and $\tilde{E}(h_m, \theta)$ approach 0 when the threshold $h_m$ approaches 0 or 1. This aligns with our discussion in Section \ref{subsec:CorrectDecision} that feasibility checks become more challenging when thresholds are extreme, while they are easiest when $h_m$ is near 0.5. In fact, both $E(h_m, \theta)$ and $\tilde{E}(h_m, \theta)$ attain their maximum at $h_m=0.5$, where $E(0.5, \theta)=\tilde{E}(0.5, \theta)=\frac{\theta-1}{2(\theta+1)}$.

    \item $\tilde{E}(h_m, \theta)\geq E(h_m, \theta)$ for all combinations of $h_m$ and $\theta$. This agrees with our discussion in Section \ref{sec:Thresholds_CompareRF} that $E(h_m, \theta)$ provides a more conservative setting compared with $\tilde{E}(h_m, \theta)$. 
    \item Both $E(h_m, \theta)$ and $\tilde{E}(h_m, \theta)$ are symmetric about $h_m = 0.5$; that is, $E(h_m, \theta)= E(1-h_m, \theta)$ and $\tilde{E}(h_m, \theta)= \tilde{E}(1-h_m, \theta)$. 
    \item Because the constraint’s tolerance level is defined as the minimum across all $\epsilon_m$ or $\tilde{\epsilon}_m$, the presence of any threshold $h_m$ close to 0 or 1 forces $\epsilon$ or $\tilde{\epsilon}$ to be small. Conversely, if all thresholds are near 0.5, the resulting $\epsilon$ or $\tilde{\epsilon}$ will not be excessively small. A small tolerance level increases the number of observations required by $\mathcal{RF}$. This is a benefit of our odds-ratio approach, see Section \ref{subsec:CorrectDecision}.
\end{itemize}

\section{$\mathcal{RF}$ for Subjective Probability Constraints}
\label{sec:RF}

In this section, we provide a detailed discussion on how the decision maker can use $\mathcal{RF}$, due to \citet{zhou2022finding}, to perform feasibility checks for subjective probability constraints.

To use Procedure $\mathcal{RF}$ to solve the feasibility problem involving constraints on probabilities, we need two modifications: (1) treating batch means of the Bernoulli observation $Y_{i\ell n}$ as basic observations, and (2) determining the tolerance level that is used by $\mathcal{RF}$ for fair comparison with $\mathcal{BRF}$. A detailed discussion of choosing the equivalent tolerance level for $\mathcal{RF}$ is provided in Section \ref{subsec:RFComparison} and Appendix \ref{sec:RF_Tolerance}. We now discuss treating batch means as basic observations. 

The $\mathcal{RF}$ procedure is designed for normally distributed observations. Since batch means of Bernoulli observations can be approximated as normally distributed data, 
we use batch means of $Y_{i\ell n}$ as basic observations. 
Specifically, we let $b$ be the batch size and let ${B}_{i\ell n}$ be the $n$th basic observation (i.e., the $n$th batch mean), where 
\begin{align}
    {B}_{i\ell n} = \frac{1}{b}\sum_{n'=b(n-1)+1}^{bn} Y_{i\ell {n'}}.  \label{eqn:BatchMeans_RF}
\end{align}
In other words, to generate the $n$th basic observation ${B}_{i\ell n}$ for $\mathcal{RF}$, one needs to obtain $b$ Bernoulli observations $Y_{i\ell n'}$ where $n'=b(n-1)+1, \ldots, bn$. We let $\bar{B}_{i\ell}{(r)}=\frac{1}{r} \sum_{n=1}^r B_{i\ell n}$ be the average of $r$ batch means with a batch size of $b$.

To implement the $\mathcal{RF}$ procedure, we need additional notation as follows.
\begin{align}
n_0 &\equiv \text { the initial observation size for each system } \left(n_0 \geq 2\right); \nonumber \\
S_{i \ell}^2\left(n_0\right) &\equiv \text { the sample variance of } {B}_{i \ell 1}, \ldots, {B}_{i \ell n_0}; \nonumber \\
R\left(r_i ; v, w, z\right) &\equiv \max \left\{0, \frac{\left(n_0-1\right) w z}{v}-\frac{v}{2 c} r_i\right\} \text { for } v, w, z \in \mathbb{R}^{+} \text {and } c \in\{1,2, \ldots, \infty\};  \nonumber \\
g(\eta) &\equiv 
\begin{cases}
    \sum_{j=1}^c(-1)^{j+1}\left(1-\frac{1}{2} \mathbb{I}(j=c)\right) \times\left(1+\frac{2 \eta(2 c-j) j}{c}\right)^{-\left(n_0-1\right) / 2}, & c \in \mathbb{N}^{+},  \\
    \int_0^{\infty} \frac{1}{1+\exp (2 \eta x)} \times \frac{1}{2^{\left(n_0-1\right) / 2} \Gamma\left(\frac{n_0-1}{2}\right)} x^{\frac{n_0-1}{2} -1} e^{-x / 2} dx, & c=\infty, \label{eqn:beta_rf}
\end{cases}  \nonumber
\end{align}
where $\Gamma(\cdot)$ is the gamma function. For the $g(\eta)$ function, the choice of $c=1$ is the most popular \citep{zhou2022finding}, which is also used in our experiments. Algorithm \ref{alg:rf} provides a detailed description of the $\mathcal{RF}$ procedure in the presence of subjective probability constraints using the less conservative tolerance levels $\tilde{\epsilon}_1, \ldots, \tilde{\epsilon}_s$ from Equation \eqref{eqn:epsilon} (to employ the tolerance levels $\epsilon_1, \ldots, \epsilon_s$ from Equation \eqref{eqn:epsilon_RF}, replace $\tilde{\epsilon}_\ell$ and $\tilde{h}_{\ell,m}^{(1)}$ in Algorithm \ref{alg:rf} by $\epsilon_\ell$ and $h_{\ell,m}^{(1)}$, respectively, for $\ell=1,\ldots,s$ and $m=1,\ldots,d_\ell$).  

\begin{algorithm}[h!]
	\caption{Procedure $\mathcal{RF}$ for subjective probability constraints}\label{alg:rf}
{\fontsize{10}{14}\selectfont	
\begin{algorithmic}
	\State [{\bf Setup}:]  Choose confidence level $1-\alpha$, initial observation size $n_0$, threshold set $\{h_{\ell,1}^{(1)}, h_{\ell,2}^{(1)}, \ldots, h_{\ell, d_\ell}^{(1)}\}$, odds-ratio IZ parameters $\theta_\ell$ for $\ell=1,2,\ldots,s$, and the batch size $b$. Set $\Gamma =\{1,2,\ldots,k\}$. 
    Calculate the thresholds $\{\tilde{h}_{\ell,1}^{(1)},\tilde{h}_{\ell,2}^{(1)},\ldots,\tilde{h}_{\ell,d_\ell}^{(1)}\}$ and tolerance level $\tilde{\epsilon}_\ell$ from Equations \eqref{eqn:h_tilde} and \eqref{eqn:epsilon} for $\ell=1,\ldots,s$. Set $\eta_\ell$ such that $g(\eta_\ell) = \beta_\ell$
    where $\beta_\ell$ is determined as in Equation \eqref{eqn:betaell}.

	\For{each system $i \in \Gamma$} 
        \State [{\bf Initialization}:] 
            \setlength{\itemindent}{0.26in}
            \begin{itemize}
            \item Obtain $b n_0$ observations $Y_{i\ell1}, \ldots, Y_{i\ell (b n_0)}$.
            \item Calculate
            $B_{i \ell n}$ as in Equation \eqref{eqn:BatchMeans_RF} for $n=1,\ldots,n_0$. 
            \item Compute the sample mean and sample variance of the batch means $B_{i\ell 1}, \ldots, B_{i\ell n_0}$ as $\bar{B}_{i \ell}(n_0)$ and $S_{i \ell}^2\left(n_0\right)$, respectively.  
            \item Set $r_{i}=n_0, {\rm ON} = \{1,2,\ldots, s\}$, and ${\rm ON}_\ell = \{1,2,\ldots, d_\ell\}$ for $\ell = 1, 2, \ldots, s$.
            \end{itemize}
        \State [{\bf Feasibility Check}:]  
        \For{each constraint $\ell \in {\rm ON}$}
        \For{each threshold $m \in {\rm ON}_\ell$}
		\begin{itemize}
			\setlength{\itemindent}{0.3in}
            \item[] If $\bar{B}_{i \ell}\left(r_i\right)+R(r_i ; \tilde{\epsilon}_{\ell}, \eta_\ell, S_{i \ell}^2\left(n_0\right))/{r_i} \leq \tilde{h}_{\ell,m}^{(1)}$, set $ Z_{i\ell}^{(m)}=1$ and ${\rm ON}_\ell = {\rm ON}_\ell \setminus \{m\}$;
			\item[] Else if $\bar{B}_{i \ell}\left(r_i\right)-R(r_i ; \tilde{\epsilon}_{\ell}, \eta_\ell, S_{i \ell}^2\left(n_0\right))/{r_i} \geq \tilde{h}_{\ell,m}^{(1)}$, set $ Z_{i\ell}^{(m)}=0$ and ${\rm ON}_\ell = {\rm ON}_\ell \setminus \{m\}$. 
		\end{itemize}
        \EndFor
        \State If ${\rm ON}_\ell = \emptyset$, set ${\rm ON} = {\rm ON}\setminus \{\ell\}$
        \EndFor
    \State [{\bf Stopping Condition}:] 
		 \begin{itemize}
			\item[] If ${\rm ON}=\emptyset$, return $Z_{i\ell}^{(m)}$ for $\ell=1,\ldots,s$ and $m=1,\ldots,d_\ell$. Otherwise, take $b$ additional observation $Y_{i\ell (r_{i}b+1)}, \ldots, Y_{i \ell (b r_i+b)}$ for all $\ell=1,\ldots,s$, and set $r_{i}=r_{i}+1$. Update $\bar{B}_{i \ell}\left(r_i\right)$ for $\ell \in {\rm ON}$ and go to [{\bf Feasibility Check}].
		\end{itemize} 
    \EndFor

\end{algorithmic}
}
\end{algorithm}

\section{Additional Numerical Results for Section \ref{subsec:RFComparison}}
\label{sec:RFComparison_Additional}

Table \ref{tab:brf_rf_sc_nonsc_additional} shows experimental results comparing $\epsilon_1$ and $\tilde{\epsilon}_1$ for $\theta_1=1.5$ under both SC or non-SC thresholds. 

\begin{table}[h!]
\centering
\scriptsize
{\normalsize
\caption{PCD Rate and Average OBS for $\mathcal{BRF}$ and $\mathcal{RF}$ for $\theta_1=1.5$.}
\label{tab:brf_rf_sc_nonsc_additional}
\renewcommand{\arraystretch}{0.8}
\begin{tabular}{lc||cc|cc|cc|cc}
\toprule
& &
\multicolumn{4}{c|}{SC Thresholds} &
\multicolumn{4}{c}{non-SC Thresholds} \\ \cline{3-10}
& & 
\multicolumn{2}{c}{$\epsilon_1$} &
\multicolumn{2}{c|}{$\tilde{\epsilon}_1$} & \multicolumn{2}{c}{$\epsilon_1$} &
\multicolumn{2}{c}{$\tilde{\epsilon}_1$} \\ \cline{3-10}
& & PCD & Avg & PCD & Avg & PCD & Avg & PCD & Avg \\
& $b$ & Rate & OBS & Rate & OBS & Rate & OBS & Rate & OBS \\\midrule

$\mathcal{BRF}$ & --  & 8/8 & 432.4 & 8/8 & 432.4 & 8/8 & 226.4 & 8/8 & 226.4 \\ \cline{2-10}

\multirow{11}{*}{$\mathcal{RF}$}
& 1 & 3/8 & 486.7 & 1/8 & 418.0 & 5/8 & 581.3 & 5/8 & 477.1 \\
& 2 & 5/8 & 493.3 & 2/8 & 416.2 & 6/8 & 585.7 & 6/8 & 478.6 \\
& 4 & 6/8 & 498.6 & 3/8 & 429.0 & 7/8 & 589.9 & 7/8 & 491.9 \\
& 8 & 7/8 & 539.0 & 5/8 & 471.8 & 7/8 & 634.0 & 7/8 & 532.7 \\
& 16 & 7/8 & 646.0 & 6/8 & 573.2 & 8/8 & 730.5 & 8/8 & 643.8 \\
& 32 & 7/8 & 900.6 & 7/8 & 843.0 & 8/8 & 984.0 & 8/8 & 901.6 \\
& 64 & 8/8 & 1463.2 & 7/8 & 1409.1 & 8/8 & 1534.2 & 8/8 & 1461.5 \\
& 100 & 8/8 & 2110.2 & 7/8 & 2068.1 & 8/8 & 2168.5 & 8/8 & 2007.0 \\
& 200 & 8/8 & 4016.0 & 8/8 & 4007.1 & 8/8 & 4025.7 & 8/8 & 4007.1 \\
& 300 & 8/8 & 6001.2 & 8/8 & 6000.5 & 8/8 & 6001.4 & 8/8 & 6000.1 \\
& 400 & 8/8 & 8000.1 & 8/8 & 8000.0 & 8/8 & 8000.1 & 8/8 & 8000.0 \\ 
\bottomrule
\end{tabular}}
\end{table}

\section{Additional Numerical Results for Section \ref{subsec:InventoryExp}}
\label{sec:Experiments_Additional}

In this section, we provide additional discussion related to Section \ref{subsec:InventoryExp}. Section \ref{subsec:Experiments_Inventory} includes additional results regarding the two performance measures tested in Section \ref{subsec:InventoryExp}. Section \ref{sec:ComparingBeRFandRF} provides results that demonstrate the efficiency of ${\cal BRF}$ and ${\cal MPB}_{BN}$ compared with ${\cal RF}$. Section \ref{subsec:Experiments_CRN} addresses the use of CRN. Section \ref{sec:SensitiveStockout_Results} provides additional results for the scenario when the decision maker is more sensitive to one of the performance measures compared with the other. 

\subsection{Additional Results for the Performance Measures}
\label{subsec:Experiments_Inventory}

In this section, we present the values of the two performance measures considered in the inventory example from Section \ref{subsec:InventoryExp}.
Figure \ref{fig:InventoryConstr} shows the plot of values of the two performance measures as functions of $s$ and $S$, where $s\in \{x: x=20+20m, m=0,1,\ldots,10\}$ and $S\in \{y:y=40+10n, n=0,1,\ldots,6\}$. 

\begin{figure}[h!]
\centering
\begin{subfigure}{.45\textwidth}
  \centering
  \includegraphics[width=\linewidth]{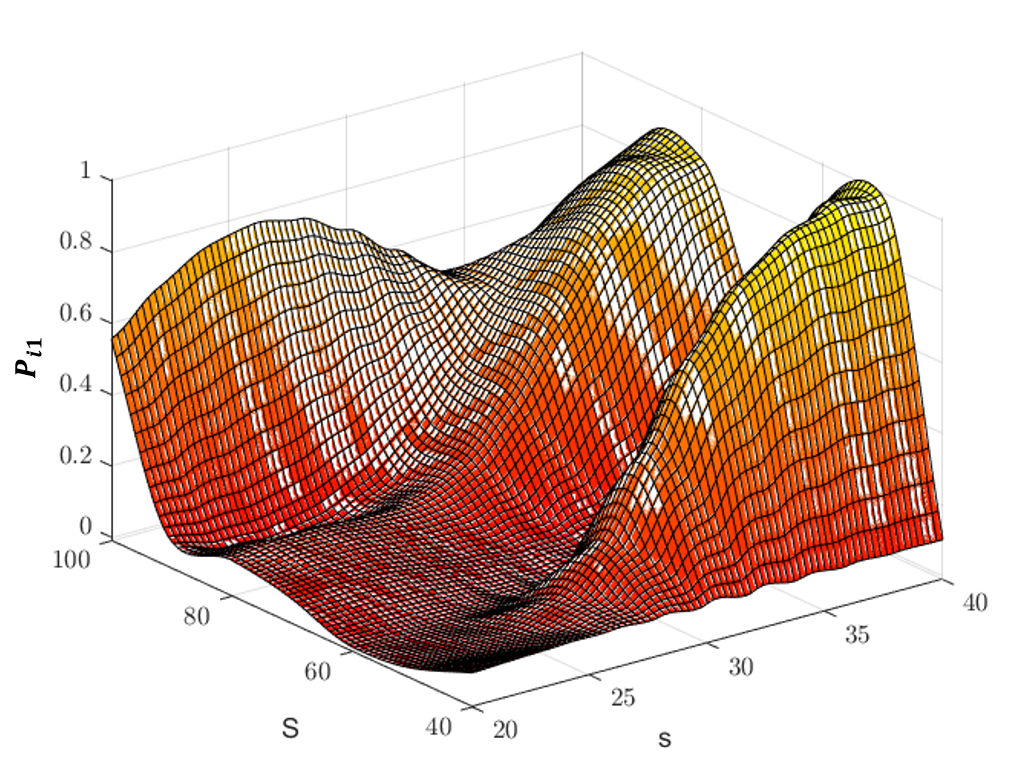}
  \captionsetup{justification=centering}
  \caption{Probability of total yearly cost \\ exceeding 1400.}
  \label{fig:sub1}
\end{subfigure}%
\hspace{0.04\textwidth}
\begin{subfigure}{.45\textwidth}
  \centering
  \includegraphics[width=\linewidth]{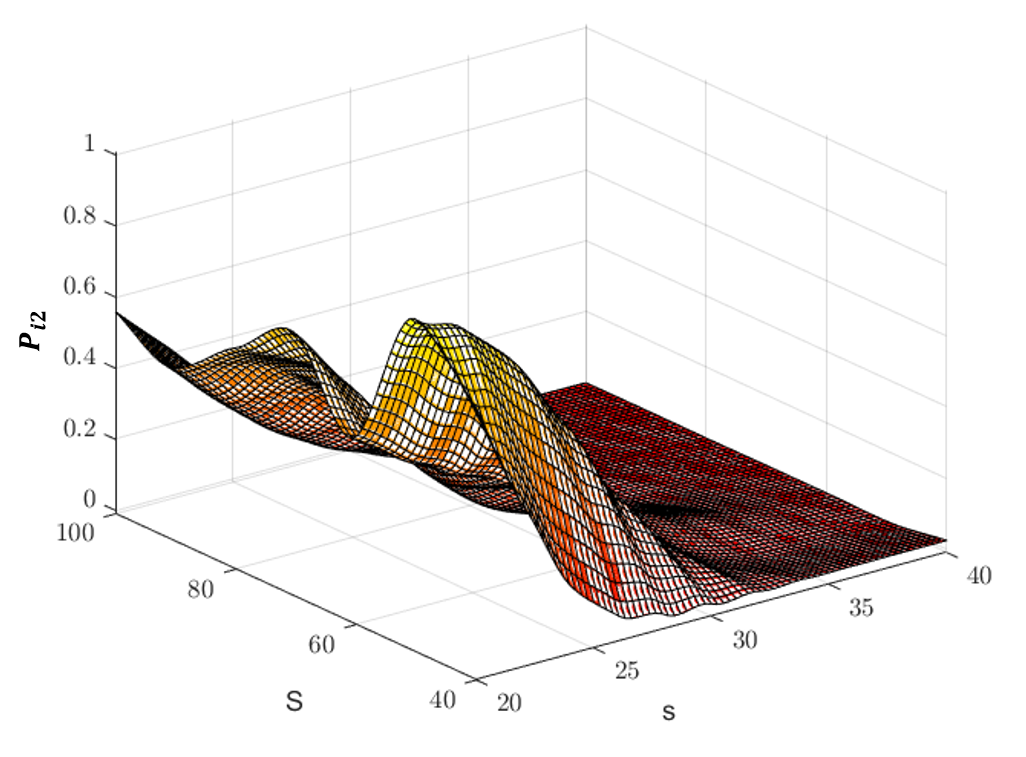}
  \captionsetup{justification=centering}
  \caption{Probability of a stock out occurring \\  within a year.}
  \label{fig:sub2}
\end{subfigure}
\caption{Values of the two performance measures $p_{i1}$ and $p_{i2}$ as functions of $s$ and $S$. }
\label{fig:InventoryConstr}
\end{figure}

\subsection{Additional Results for the Efficiency of ${\cal BRF}$ and ${\cal MPB}_{BN}$ Compared to ${\cal RF}$}
\label{sec:ComparingBeRFandRF}

In this subsection, we provide results for ${\cal BRF}, {\cal MPB}_{BN}$, and ${\cal RF}$ when different values of odds-ratio IZ, batch size, and thresholds are considered. 
Specifically, we conduct experiments where all three procedures consider the same thresholds for each constraint to evaluate their performance under similar conditions. The threshold sets are $h_{\ell} \in \{0.01, 0.05, 0.1, 0.2\}$ for both $\mathcal{BRF}$ and $\mathcal{RF}$ and $\ell=1,2$. We set $\theta_\ell\in \{1.2, 1.5\}$, where $\ell=1,2$, for $\mathcal{BRF}$ and $\mathcal{MPB}_{BN}$, and set $n_0=20$ and $b\in \{32, 100\}$ for $\mathcal{RF}$. These batch sizes strike a balance between achieving high PCD and controlling the total OBS in Table~\ref{tab:brf_rf_sc_nonsc}. The tolerance levels of $\mathcal{RF}$ are set as $\tilde{\epsilon}_\ell$, which is the less conservative of the two settings considered in Section \ref{sec:Thresholds_CompareRF}.
For $\mathcal{MPB}_{BN}$, all four thresholds are tested across $w=2$ passes to examine whether the heuristic approach $\mathcal{MPB}_{BN}$ helps reduce OBS when it needs to assess feasibility with respect to all possible thresholds. More specifically, the first pass uses thresholds $h_{\ell} \in T_\ell^{(1)}= \{0.1, 0.2\}$, while the second pass uses $h_{\ell} \in T_\ell^{(2)}= \{0.01, 0.05\}$ for both $\ell=1,2$. The results for estimated $\widehat{{\rm PCD}}$ and OBS are presented in Table \ref{tab:multiwith2constraints}.

\begin{table}[h!] 

    \centering
    \renewcommand{\arraystretch}{0.8}
    \caption{$\widehat{{\rm PCD}}$ and OBS for the inventory problem.} \label{tab:multiwith2constraints}
    \begin{tabular}{c|clrr}
    \toprule
    & Procedure & & \multicolumn{1}{c}{$\widehat{{\rm PCD}}$} & \multicolumn{1}{c}{OBS (s.e.)} \\ \midrule
    \multirow{4}{*}{$\theta_1 = \theta_2 = 1.2$} & $\mathcal{BRF}$ & & 0.998 & 806,179 (3299.49)  \\ \cline{2-5} 
    & \multirow{2}{*}{$\mathcal{RF}$} & $b=32$ & 1.000 & 6,465,410 (16927.27) \\ \cline{3-5} 
    & & $b=100$ & 1.000 & 6,504,604 (17286.26) \\  \cline{2-5} 
    & $\mathcal{MPB}_{BN}$ & & 0.998 & 716,354 (2236.49) \\ \midrule
    \multirow{4}{*}{$\theta_1 = \theta_2 = 1.5$} & $\mathcal{BRF}$ & & 0.997 & 280,864 (1114.39) \\ \cline{2-5} 
   & \multirow{2}{*}{$\mathcal{RF}$} & $b=32$ & 1.000 & 1,593,432 (3506.43) \\ \cline{3-5} 
   & & $b=100$ & 1.000 & 1,602,894 (3467.99) \\ \cline{2-5}
   & $\mathcal{MPB}_{BN}$ & & 0.997 & 242,411 (741.87) \\  
   \bottomrule
    \end{tabular}
\end{table}

For both choices of $\theta_1=\theta_2$, all three procedures have estimated $\widehat{{\rm PCD}}$ greater than the nominal confidence level 95\%. However, there are significant differences in OBS between the three procedures. With $\theta_\ell = 1.5$, $\mathcal{RF}$ spends approximately about 5.7 times more OBS compared to $\mathcal{BRF}$. 
The differences increase further to about 8 times larger OBS for $\mathcal{RF}$ when $\theta_\ell = 1.2$, demonstrating the advantages of using $\mathcal{BRF}$ or $\mathcal{MPB}_{BN}$ over $\mathcal{RF}$. Note that $\mathcal{RF}$ yields similar OBS for $b=32$ and 100. This is because the OBS is dominated by hard systems and varying batch sizes does not change the required OBS for those systems significantly. Finally, $\mathcal{MPB}_{BN}$ utilizes approximately 12\% fewer OBS compared to $\mathcal{BRF}$. Thus, similar to Table \ref{tab:berfwsingle1}, ${\cal MPB}_{BN}$ is beneficial in this setting, but less so than in Table \ref{tab:multi2const} where ${\cal MPB}_{BN}$ does not need to consider all thresholds. 
Note that all three procedures require smaller OBS when $\theta_\ell=1.5$ than when $\theta_\ell=1.2$, where $\ell=1,2$. This is because a larger $\theta_\ell$ indicates that the decision maker is more indifferent to the feasibility decision near the threshold, which results in earlier stopping of the process. 

Finally, Table \ref{tab:feasible2constraintsBeRF} reports the number of systems deemed feasible by $\mathcal{BRF}$ for one correctly decided macro replication with $\theta = 1.5$ as well as the number obtained from the analytical results derived via 1,000,000 replications of the Markov chain model shown in Figures \ref{fig:TwoPerformance} and \ref{fig:InventoryConstr}. 
\begin{table}[h!]
\centering
\setlength{\tabcolsep}{10pt}
\renewcommand{\arraystretch}{0.8}
{\normalsize
\caption{Number of feasible systems based on $\mathcal{BRF}$'s feasibility decisions when $\theta_{1} = \theta_{2} = 1.5$ as functions of $h_1$ and $h_2$. The analytical results are shown in parentheses.}
\label{tab:feasible2constraintsBeRF}
\begin{tabular}{c|cccc}
\toprule
\backslashbox{$h_1$}{$h_2$} 
& 0.01& 0.05& 0.1& 0.2 \\ \midrule
0.01 & 0 (0) & 0 (0) & 0 (0) & 0 (0) \\ \hline
0.05 & 0 (0) & 2 (2) & 2 (2) & 4 (4) \\ \hline
0.1 & 0 (0) & 5 (3) & 5 (3) & 9 (7) \\ \hline
0.2 & 1 (1) & 11 (11) & 16 (15) & 20 (20) \\ \bottomrule
\end{tabular}
}
\end{table}

Table \ref{tab:feasible2constraintsBeRF} shows that $\mathcal{BRF}$ yields feasibility decisions for each system that are close to the analytical results. 
The slight differences are due to the fact that our approach determines some systems as acceptable. As noted in Section \ref{subsec:CorrectDecision} of our paper, these systems are classified into the $A_\ell(h_{\ell,m}^{(w)})$ set, and even if our feasibility determination for the systems does not exactly match the true decision, it is still regarded as a correct decision. Among the 77 systems, two systems are determined to have the probability $p_{i1}$ of exceeding 1400 less than 5\% and the probability $p_{i2}$ of a stockout occurring within a year less than 5\%. They are the systems with $(s,S) \in \{(30, 70), (32, 70)\}$. The decision maker can utilize this information not only to check feasibility but also to determine the optimal values of $(s,S)$ to minimize the probabilities. For example, if the decision maker introduces tighter thresholds (like 0.03) for which only one of these two systems is feasible, then she can choose the single feasible system as the optimal solution. 

\subsection{Impact of CRN}
\label{subsec:Experiments_CRN}

In this section, we test the impact of CRN when generating observations regarding systems' performance measures. Note that the CRN is not recommended for feasibility determination because it does not involve pairwise comparison. Nevertheless, CRN is often a default design in simulation software packages and the decision maker may want to employ CRN when the feasibility determination is combined with the selection of the best feasible system. In this experiment, we use the same demand for all systems to achieve the impact of CRN. When using CRN, the cross-correlation between systems is estimated by generating 1,000,000 output samples for each system. The results are shown in Figure \ref{fig:crosscorr}. 
\begin{figure}[h!]
\centering
\begin{subfigure}{.5\textwidth}
  \centering
  \includegraphics[width=0.9\linewidth]{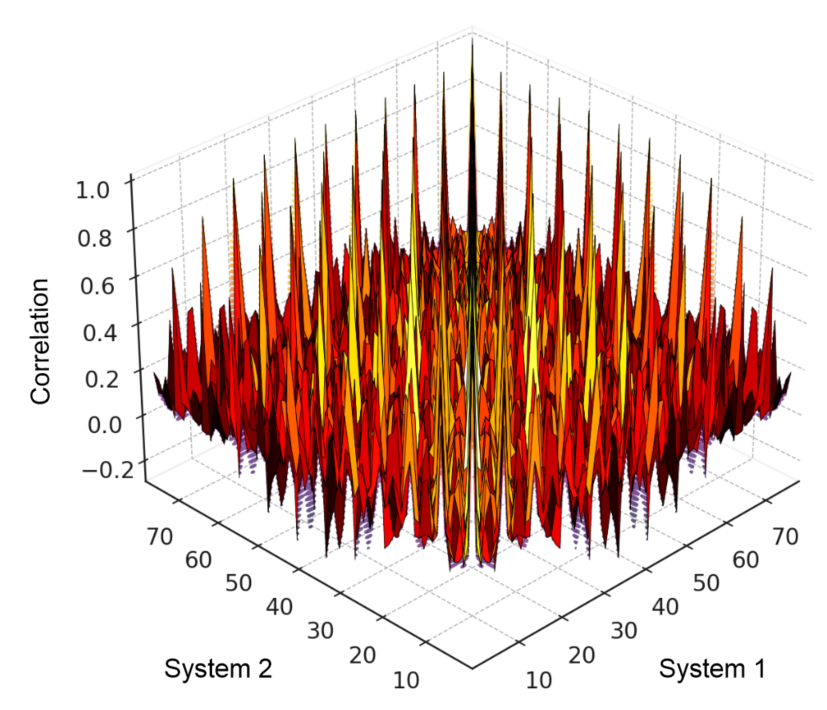}
  \captionsetup{justification=centering}
  \caption{The correlation between systems \\ regarding the first constraint $p_{i1}$.}
  \label{fig:sub1}
\end{subfigure}%
\begin{subfigure}{.5\textwidth}
  \centering
  \includegraphics[width=0.9\linewidth]{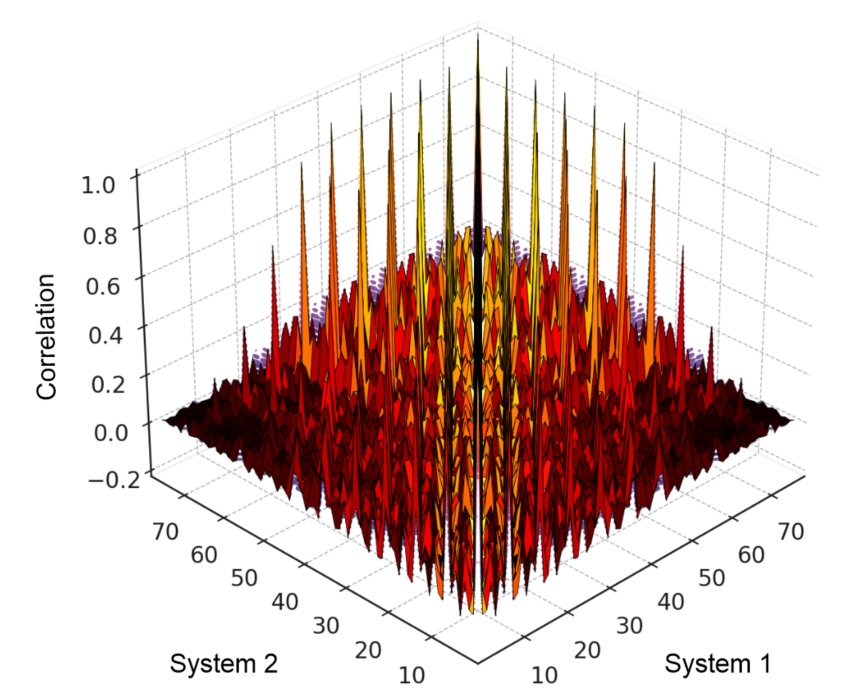}
  \captionsetup{justification=centering}
  \caption{The correlation between systems \\ regarding the second constraint $p_{i2}$.}
  \label{fig:sub2}
\end{subfigure}
\caption{Correlations between systems with respect to the first and the second constraints. }
\label{fig:crosscorr}
\end{figure}

For the first constraint (the probability of total yearly cost exceeding 1400), the mean cross-correlation of the output data across systems was 0.1866, with a maximum value of 1 and a minimum value of -0.2663. For the second constraint (the probability of a stockout occurring within a year), the mean cross-correlation was 0.0781, with a maximum of 1 and a minimum of -0.1042. The higher mean correlation in the first constraint suggests that system outputs exhibit relatively stronger interdependence compared to the second constraint. Additionally, the minimum correlation values indicate that some system pairs exhibit a significant negative correlation for the second constraint. The occurrence of a maximum correlation of 1 can be attributed to the fact that the output of each system follows  Bernoulli distributions; thus, in cases where both systems' outputs are exactly the same with all 0 or all 1, the correlation is computed as 1. 

We now evaluate the performance of our procedures in the context of Appendix \ref{sec:ComparingBeRFandRF} but employing CRN.
In addition, because the proposed procedures require generating dummy Bernoulli random variables (i.e., as $I_{i\ell m n}^{(w)}$ in Algorithms \ref{alg:berf}, \ref{alg:berf_b}, \ref{alg:berf_n}, and \ref{alg:berf_bn}), we also examine a scenario in which the decision maker employs CRN when generating these variables.
Table~\ref{tab:CRNwith2constraints} reports the estimated $\widehat{{\rm PCD}}$ and OBS under three settings: (1) systems are simulated independently (``Without CRN''); (2) systems are simulated using CRN, but the dummy Bernoulli random variables are generated independently (``With CRN''); and (3) systems are simulated using CRN, and the dummy Bernoulli random variables are also generated using CRN (``With CRN, $U_{ir}=U_r$''). Note that we do not test $\mathcal{RF}$ in this setting because $\mathcal{RF}$ performs much worse than $\mathcal{BRF}$ and $\mathcal{MPB}_{BN}$ when systems are simulated independently (see Table \ref{tab:multiwith2constraints}) and  \cite{zhou2022finding} show that incorporating CRN does not reduce OBS as feasibility determination does not involve pairwise comparison and thus there is no variance reduction with CRN. Moreover, the parameter $\eta_\ell$ of $\mathcal{RF}$ increases as $\beta_\ell$ decreases. CRN results in a smaller $\beta_\ell$, a hence larger $\eta_\ell$, and a larger continuation region, which contributes to an increased OBS. 

\begin{table}[h!] 
\centering
\renewcommand{\arraystretch}{0.9}
{\normalsize
\caption{$\widehat{{\rm PCD}}$ and OBS of $\mathcal{BRF}$ and $\mathcal{MPB}_{BN}$ for the inventory problem with CRN.}
\label{tab:CRNwith2constraints}
\begin{tabular}{llrrrr}
\hline
\multicolumn{2}{c}{\multirow{2}{*}{}} & \multicolumn{2}{c}{$\theta_1 = \theta_2 = 1.2$}                   & \multicolumn{2}{c}{$\theta_1 = \theta_2 = 1.5$}                   \\ \cline{3-6} 
\multicolumn{2}{c}{}                  & \multicolumn{1}{c}{$\widehat{{\rm PCD}}$} & \multicolumn{1}{c}{OBS (s.e.)} & \multicolumn{1}{c}{$\widehat{{\rm PCD}}$} & \multicolumn{1}{c}{OBS (s.e.)} \\ \hline
\multirow{3}{*}{$\mathcal{BRF}$} & Without CRN  & 0.998 & 806,178 (3299.49) & 0.997 & 280,864 (1114.39)  \\ 
                      & With CRN   & 0.998 & 805,642 (3313.39) & 1.000 & 281,285 (1104.58)  \\ 
                      & With CRN, $U_{ir}=U_r$ & 1.000 & 799,693 (4240.00) & 1.000 & 281,667 (1448.19) \\ \hline
\multirow{3}{*}{$\mathcal{MPB}_{BN}$}  & Without CRN     & 0.998  &  716,354 (2236.49)  & 0.997  & 242,411 (741.87)   \\ 
                      & With CRN  & 0.999 & 713,420 (2460.22) & 1.000  & 243,045 (768.16)  \\ 
                      & With CRN, $U_{ir}=U_r$ & 1.000 & 714,069 (3262.99) & 1.000 & 242,290 (1020.93)  \\ \hline
\end{tabular}
}
\end{table}

Table \ref{tab:CRNwith2constraints} shows that no significant differences are observed with and without CRN.
Similar to $\mathcal{RF}$, $\mathcal{BRF}$ and $\mathcal{MPB}_{BN}$ also employ a smaller $\beta_\ell$ with CRN but this does not always lead to a larger continuation region because $H$ is an integer that satisfies Equation \eqref{eqn:H}. Indeed, in our experiments, the value of $H$ remains the same with and without CRN. 

\subsection{Additional Results When Decision Maker is Sensitive to Stockout}
\label{sec:SensitiveStockout_Results}

As discussed in Section \ref{sec:Intro}, in this section we assume the decision maker is more sensitive to the stockout probability constraint ($p_{i2}$) than the yearly cost probability constraint ($p_{i1}$) and chooses $h_1 \in \{0.01, 0.05, 0.1, 0.2\}$ and $h_2\in\{0.01, 0.03, 0.05, \ldots, 0.49, 0.51\}$ for $\mathcal{BRF}$ and $\mathcal{RF}$. For $\mathcal{MPB}_{BN}$,  the first pass uses thresholds $h_{1} \in T_1^{(1)} = \{0.1\}$ and $h_{2} \in T_2^{(1)} = \{0.11, 0.21, 0.31, 0.41, 0.51\}$, while the second pass uses $h_{1} \in T_1^{(2)}= \{0.01, 0.05\}$ and $h_{2} \in T_2^{(2)}=\{0.01, 0.03, 0.05, 0.07, 0.09\}$ (since multiple systems pass the feasibility check for thresholds $h_1=0.1$ and $h_2=0.11$ in the first pass). The other parameters are the same as in Table \ref{tab:multi2const}. The results are presented in Table~\ref{tab:multi2constsensitive} below.
\begin{table}[h!] 
\centering
\renewcommand{\arraystretch}{0.8}
\caption{$\widehat{{\rm PCD}}$ and OBS for the inventory problem leveraging $\mathcal{MPB}$ for pruning inferior systems and with more sensitivity to the second constraint.}
\label{tab:multi2constsensitive}
{\normalsize
\begin{tabular}{c|ccc}
\toprule
    & $\mathcal{MPB}_{BN}$ & $\mathcal{BRF}$ & $\mathcal{RF}$      \\ \midrule
$\widehat{{\rm PCD}}$ &  0.998  &  0.997  & 0.999 \\ \hline
${\rm OBS}^{(1)}$ & 77,389 & -- & -- \\ \hline
${\rm OBS}^{(2)}$ & 21,825 & -- & -- \\ \hline
OBS (s.e.) & 99,214 (365.23) & 328,394 (1149.87) & 3,312,674 (7036.43) \\ \bottomrule
\end{tabular}
}
\end{table}

In this scenario, $\widehat{{\rm PCD}}$ consistently meets the desired confidence level of 0.95. Regarding OBS, the results are similar to those in Table \ref{tab:multi2const}: $\mathcal{RF}$ requires 10 times the OBS of $\mathcal{BRF}$ and 33 times that of $\mathcal{MPB}_{BN}$, while $\mathcal{MPB}_{BN}$ successfully performs feasibility determination using 30\% of the OBS required by $\mathcal{BRF}$. Although this scenario is relatively easier than the one in Table \ref{tab:multi2const} due to fewer thresholds, the results confirm that both $\mathcal{MPB}_{BN}$ and $\mathcal{BRF}$ remain efficient procedures. This holds true even when one constraint is considered relatively more critical and is handled with a denser threshold grid than the other constraint. 

\end{document}